\def\alloc@#1#2#3#4#5%
	\def\etex@dummy@definition{}
\alloc@@\expandafter{\string#2}#5%
	\else\errmessage{No room for a new #2}\fi\fi
\theoremstyle{plain}
\newtheorem*{mainthm}{Main Theorem}
\newtheorem{theorem}{\bf Theorem}[section]
\newtheorem{definition}[theorem]{\bf Definition}
\newtheorem{corollary}[theorem]{\bf Corollary}
\newtheorem{lemma}[theorem]{\bf Lemma}
\newtheorem{proposition}[theorem]{\bf Proposition}
\newtheorem{prop-def}[theorem]{\bf Proposition-Definition}
\newtheorem{example}[theorem]{\bf Example}
\newtheorem{remark}[theorem]{\bf Remark}
\newtheorem*{theorem*}{Theorem}
\begin{document}

\title[Birational transformations]{ Birational transformations on irreducible compact Hermitian symmetric spaces}
\author{Cong Ding}
	\address{Morningside Center of Mathematics, Academy of Mathematics \& Systems Science, The Chinese Academy of Sciences, Beijing, China}
	\email{congding@amss.ac.cn}
	\subjclass[2010]{14M17, 14M20 \and 32M10}
	\date{\today}
	\maketitle
	
	\begin{abstract} 
		We construct a sequence of explicit blow-ups and blow-downs on irreducible compact Hermitian symmetric spaces $X$ which transforms it into a projective space of the same dimension. Moreover this resolves a birational map given by Landsberg and Manivel. Centers of the blow-ups for $X$ are constructed by loci of chains of minimal rational curves and centers of the blow-ups for the projective space are constructed from the variety of minimal rational tangents of $X$ and its higher secant varieties.
	\end{abstract}
	
\tableofcontents
\section{Introduction}
 It is known that an irreducible compact Hermitian symmetric space is a rational variety, i.e. it is birational to a projecitive space. The goal of this article is to give explicit birational transformations (blow-ups and blow-downs) which transform it into a projective space. Moreover the birational transformations can be uniformly described independent of the classification. In \cite[Section 2.1]{MR1890196}, a rational map $\varphi$ from $\mathbb{P}^n$ to $\mathbb{P}^N$ is written explicitly which gives a birational map between $\mathbb{P}^n$ and an $n$-dimensional irreducible compact Hermitian symmetric space $X$. Assume that the rank of $X$ is $r$, we will find that this birational map can be factorized into 
successive blow-ups for $r-1$ times and successive blow-downs for $r-1$ times, i.e. $\text{Bl}_{r-1}X\cong_{\tilde{\varphi}} \text{Bl}_{r-1}\mathbb{P}^n$ where $\text{Bl}_{r-1}X\rightarrow X, \text{Bl}_{r-1}\mathbb{P}^n\rightarrow \mathbb{P}^n$ are the corresponding successive blow-ups on $X$ and $\mathbb{P}^n$ respectively, $\tilde{\varphi}$ is a resolution of $\varphi$. 

Centers of the blow-ups of $X$ are constructed from chains of minimal rational curves starting from the sink of Bia{\l}ynicki-Birula decomposition (see \cite{MR366940}) of $X$ with respect to the $\mathbb{C}^*$-action on $X$ (see details in Section \ref{construction}). On the other hand, for the centers of the blow-ups of $\mathbb{P}^n$, we need to introduce a subvariety in $\mathbb{P}T_o(X) \cong \mathbb{P}^{n-1}\subset \mathbb{P}^n$ which is called the variety of minimal rational tangents (VMRT for short) of $X$ where $o$ is a base point on $X$. General theory for VMRT on uniruld projective manifolds were developed by Hwang-Mok in a series of works, see for example \cite{MR1748609},\cite{MR2521656} for some surveys. For our own use in the case of irreducible compact Hermitian symmetric spaces, we know there is a minimal equivariant embedding $X\hookrightarrow \mathbb{P}(\Gamma(X,\mathcal{O}(1))^*)$ where $\mathcal{O}(1)$ is the generator of $Pic(X)$. Minimal rational curves are free rational curves of minimal degree with respect to $\mathcal{O}(1)$ which are projective lines through the minimal equivariant embedding.

\begin{definition}\label{VMRTdef}
	The VMRT of $X$ denoted by $\mathcal{C}_o^1(X)\subset \mathbb{P}T_o(X)\cong \mathbb{P}^{n-1}$ is the collection of tangents of free minimal rational curves in $X$ passing through $o$.
\end{definition}

 If we write $X=G/P$ where $G$ is a connected complex simple Lie group and $P$ is a maximal parabolic subgroup, $\mathcal{C}_o^1(X)$ is precisely the highest weight orbit of the isotropy action. Let \[\mathcal{C}_o^k(X)=Sec^{k-1}(\mathcal{C}_o^1(X)):=\overline
{\bigcup_{a_1,\cdots, a_k\in \mathcal{C}_o^1(X)}\mathbb{P}_{a_1a_2\cdots a_k}}\] be the $(k-1)$-th secant variety which is the Zariski closure of union of projective linear subspace spanned by $a_1,a_2,\cdots ,a_k\in \mathcal{C}_o^1(X)$ (generically isomorphic to $\mathbb{P}^{k-1}$).
Then centers of the blow-ups of $\mathbb{P}^n$ are just $\mathcal{C}_o^1(X)$ and strict transforms of $\mathcal{C}_o^k(X)(k\geq 2)$.

Now we can write down the rational map given by Landsberg-Manivel \cite[Section 2.1]{MR1890196} in terms of coordinates. $\varphi: \mathbb{P}^n\dashrightarrow \mathbb{P}^N$ is given as follows
\[\begin{aligned}
\varphi([x_0,x_1,\cdots, x_n])&=[x^r_0,x_0^{r-1}x_1,\cdots,x_0^{r-1}x_n,x^{r-2}_0\mathcal{I}_2(\mathcal{C}_o^1(X)),\cdots x_0^{r-k-1}\mathcal{I}_{k+1}(\mathcal{C}_o^{k}(X)),\cdots, \mathcal{I}_r(\mathcal{C}_o^{r-1}(X))]
\end{aligned}\]
where $\mathcal{I}_{k+1}(\mathcal{C}_o^k(X))$ is the set of generators of the ideal of $\mathcal{C}_o^k(X)\subset \mathbb{P}T_o(X)$ in degree $k+1$.

In \cite{MR1890196} they proved 
\begin{theorem}\label{thmLM}
	The Zariski closure of $\mathrm{Im}(\varphi)$ is isomorphic to the irreducible compact Hermitian symmetric space $X$.
\end{theorem}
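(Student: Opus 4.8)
The plan is to reduce everything to the minimal equivariant embedding $X\hookrightarrow \mathbb P V$ with $V=\Gamma(X,\mathcal O(1))^*$, and to match the Landsberg--Manivel map on the big cell with the standard parametrization of the big cell of $X$ by the abelian nilradical. Write $X=G/P$ with grading $\mathfrak g=\mathfrak g_{-1}\oplus\mathfrak g_0\oplus\mathfrak g_{1}$; since $X$ is Hermitian symmetric $\mathfrak g_{-1}$ is abelian, and under the grading element the irreducible module $V$ decomposes as $V=V_0\oplus V_1\oplus\cdots\oplus V_r$ with $V_0=\mathbb C v_\omega$ the highest weight line, $\dim V_1=n$, and exactly $r+1$ graded pieces (this is where $\mathrm{rank}(X)=r$ enters). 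The opposite big cell $\exp(\mathfrak g_{-1})\cdot[v_\omega]$ is Zariski dense in $X$, and since $\mathfrak g_{-1}$ is abelian and shifts the graded index by one (so that $\xi^{r+1}v_\omega=0$), one has the terminating expansion $\exp(\xi)v_\omega=\sum_{k=0}^r \tfrac1{k!}\xi^k v_\omega$ with $\tfrac1{k!}\xi^k v_\omega\in V_k$. Identifying $\mathfrak g_{-1}\cong T_o(X)\cong\mathbb C^n$ via $\xi=\sum_i x_i e_i$, each component $\tfrac1{k!}\xi^k v_\omega$ is a $V_k$-valued homogeneous polynomial of degree $k$ in $x=(x_1,\dots,x_n)$.

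First I would set $x_0=1$ to pass to the affine chart, so that $\varphi$ becomes the polynomial map $\psi(x)=[1,x_1,\dots,x_n,\mathcal I_2(\mathcal C_o^1(X))(x),\dots,\mathcal I_r(\mathcal C_o^{r-1}(X))(x)]$, and then argue block by block that $\psi$ agrees projectively with $\xi\mapsto\exp(\xi)v_\omega$ under the identification of the coordinate blocks of $\mathbb P^N$ with $V_0,\dots,V_r$. The block $k=0$ is the constant $v_\omega$, and the block $k=1$ is $\xi v_\omega=\sum_i x_i(e_i v_\omega)$, which recovers the linear coordinates $x_1,\dots,x_n$ because $e\mapsto e v_\omega$ is an isomorphism $\mathfrak g_{-1}\xrightarrow{\sim}V_1$. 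All the content is in the blocks $k\ge 2$.

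The key geometric input is the vanishing $\xi^{k}v_\omega=0$ whenever $[\xi]\in\mathcal C_o^{k-1}(X)$. Indeed, the excerpt's description of minimal rational curves as projective lines in the minimal embedding gives $a^2 v_\omega=0$ for every VMRT direction $[a]\in\mathcal C_o^1(X)$; writing a general point of $\mathcal C_o^{k-1}(X)=Sec^{k-2}(\mathcal C_o^1(X))$ as $\xi=a_1+\cdots+a_{k-1}$ with $[a_i]\in\mathcal C_o^1(X)$ and expanding $\xi^{k}$, every degree-$k$ monomial in the $k-1$ pairwise commuting operators $a_i$ must contain some $a_i$ to a power $\ge 2$ and hence kills $v_\omega$; the vanishing then propagates to the Zariski closure $\mathcal C_o^{k-1}(X)$. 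Consequently the components of $\tfrac1{k!}\xi^k v_\omega$ are degree-$k$ forms on $T_o(X)$ vanishing on $\mathcal C_o^{k-1}(X)$, so they lie in $\mathcal I_k(\mathcal C_o^{k-1}(X))$; that is, the $V_k$-block of $\psi$ maps into the $k$-th coordinate block of $\varphi$.

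I expect the main obstacle to be the reverse comparison: proving that these components span all of $\mathcal I_k(\mathcal C_o^{k-1}(X))$ and that the ideal $I(\mathcal C_o^{k-1}(X))$ is generated in its minimal degree $k$, equivalently that the isotropy-equivariant map $V_k\to\mathcal I_k(\mathcal C_o^{k-1}(X))$ is an isomorphism (in particular $\dim V_k=\dim\mathcal I_k(\mathcal C_o^{k-1}(X))$, while the top secant $\mathcal C_o^r(X)=\mathbb P T_o(X)$ carries no equations, matching the termination at $V_r$). This is precisely the structural, representation-theoretic heart of Landsberg--Manivel's analysis, and is where the description of secant ideals of VMRTs (minors, sub-Pfaffians, the Jordan-algebra cubic, etc.) enters. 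Granting it, the two maps coincide under a fixed linear isomorphism $\mathbb P^N\cong\mathbb P V$, so $\psi(\mathbb C^n)$ is exactly the dense big cell of $X$; taking Zariski closures and re-homogenizing in $x_0$ gives $\overline{\mathrm{Im}(\varphi)}=X\subset\mathbb P V\cong\mathbb P^N$ with the embedding the minimal one, which is the assertion.
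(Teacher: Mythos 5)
The first thing to say is that the paper contains no proof of Theorem \ref{thmLM} to compare against: it is quoted from Landsberg--Manivel \cite{MR1890196}, with the remark that their proof rests on local differential geometry (the fundamental forms of the minimal embedding). Your representation-theoretic framework is the natural algebraic counterpart of that analysis, and the forward half of your argument is correct and cleanly executed: the grading $V=V_0\oplus\cdots\oplus V_r$ of $V=\Gamma(X,\mathcal{O}(1))^*$ with exactly $r+1$ pieces, the terminating expansion $\exp(\xi)v_\omega=\sum_{k=0}^r\tfrac{1}{k!}\xi^kv_\omega$ over the abelian nilradical, the vanishing $a^2v_\omega=0$ for $[a]\in\mathcal{C}_o^1(X)$ (the lift of a line stays in $V_0\oplus V_1$, and $V_2\cap(V_0\oplus V_1)=0$), and the pigeonhole expansion of $(a_1+\cdots+a_{k-1})^k$ propagated by polynomiality to the Zariski closure of $Sec^{k-2}(\mathcal{C}_o^1(X))$. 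This correctly shows that every component of $\tfrac{1}{k!}\xi^kv_\omega$ is a degree-$k$ form vanishing on $\mathcal{C}_o^{k-1}(X)$.

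However, as a proof of the theorem your proposal has a genuine gap, and you flag it yourself with ``granting it'': you assume that these components span the \emph{entire} space of degree-$k$ elements of the ideal and that the ideal contains nothing in degree $<k$, so that your span coincides with the paper's $\mathcal{I}_k(\mathcal{C}_o^{k-1}(X))$. That identification is not a deferrable technicality --- it \emph{is} the content of the theorem. What you prove unconditionally is only that each coordinate of $\exp(\xi)v_\omega$ is a linear combination of $1,x_1,\dots,x_n$ and the generators, i.e.\ that the minimal embedding of the big cell is a linear \emph{projection} of $\varphi$; if the containment of spans were strict, $\overline{\mathrm{Im}(\varphi)}$ would merely be a rational variety dominating $X$ under a projection $\mathbb{P}^N\dashrightarrow\mathbb{P}V$, and no isomorphism follows (birationality, yes, since the linear coordinates are present; isomorphism, no). The gap can in fact be closed uniformly rather than by the case-by-case appeal you gesture at: by Schmid's multiplicity-free decomposition, $S^k(\mathfrak{m}^+)^*$ decomposes under $\mathfrak{k}^{\mathbb{C}}$ into irreducible summands indexed by partitions of $k$ with at most $r$ parts supported on the strongly orthogonal roots $\alpha_1,\dots,\alpha_r$; since by Lemma \ref{secantrank} the cone over $\mathcal{C}_o^{k-1}(X)$ is the rank $\leq k-1$ locus, evaluation on the normal forms of Proposition-Definition \ref{HSS-normal} shows a summand vanishes on it exactly when its partition has at least $k$ parts, and in degree $k$ the only such partition is $(1,\dots,1)$. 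Hence the degree-$k$ piece of the ideal is a single irreducible $\mathfrak{k}^{\mathbb{C}}$-module (and the ideal is empty below degree $k$, consistent with Lemma \ref{degreevmrt}). Your span is a nonzero equivariant submodule of it --- nonzero because $\xi^kv_\omega\equiv 0$ would force $\xi^jv_\omega\equiv 0$ for all $j\geq k$ and confine $X$ to a proper linear subspace, contradicting the irreducibility of $V$ --- so equality follows by Schur. Without some such argument, your writeup is an accurate outline of the known proof with its decisive step assumed, not a complete proof.
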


We can see that the embedding $X\subset \mathbb{P}^N$ induced by the ratinal map is just the 
minimal equivariant embedding $X\hookrightarrow \mathbb{P}(\Gamma(X,\mathcal{O}(1))^*)$.
The proof of Landsberg-Manivel is based on  local differential geometry.
In this article we give a interpretation of the theorem in terms of loci of chains of minimal rational curves and prove that 

\begin{mainthm}\label{mainthm}
	Let $X$ be an $n$-dimensional irreducible compact Hermitian symmetric space of rank $r\geq 2$. Then $X$ can be transformed into $\mathbb{P}^n$ by successive blow-ups for $r-1$ times and successive  blow-downs for $r-1$ times along smooth centers. More precisely, there exists a sequence of subvarieties $\mathcal{M}_1 \subset \mathcal{M}_2 \subset \cdots \subset \mathcal{M}_{r-1} \subset D$ in $X$ and a sequence of subvarieties $\mathcal{C}_o^1(X) \subset \mathcal{C}_o^2(X) \subset \cdots \subset  \mathcal{C}_o^{r-1}(X) \subset \mathbb{P}T_o(X)\cong \mathbb{P}^{n-1}$ in $\mathbb{P}^n$ such that the following diagram holds, where $D$ is the compactitying divisor with respect to a Harish-Chandra embedding of $\mathbb{C}^n$ in $X$ (i.e. $D$ is the complement of the affine cell in $X$) and $\mathbb{P}^{n-1}$ is embedded as an hyperplane in $\mathbb{P}^n$.
		\[	
		\begin{tikzcd}
		\text{Bl}_{r-1}\mathbb{P}^n\arrow{d}[near start]{
			\tilde{\varphi}}[near end]{\cong}\cdots \arrow{r} &	\text{Bl}_{k}\mathbb{P}^n \cdots \arrow{r}& 
		\text{Bl}_{1}\mathbb{P}^n  \arrow{r}& \mathbb{P}^n\arrow[dashed]{d}{\varphi}\\	
		\text{Bl}_{r-1}X \cdots \arrow{r} &	\text{Bl}_{k}X \cdots \arrow{r}& 
		\text{Bl}_{1}X  \ar{r}& X 	
\end{tikzcd}
	\]
In the diagram $\text{Bl}_{k}\mathbb{P}^n$ and $\text{Bl}_{k}X$ are the $k$-th successive blow-ups of $\mathbb{P}^n$ and $X$ with the smooth centers given by $\tau_{k-1}(\mathcal{C}_o^k(X))$ and $\mu_{k-1}(\mathcal{M}_k)$ respectively where	
$\tau_{k-1}, \mu_{k-1}$ denote the $(k-1)$-th successive strict transforms in corresponding blow-ups. 
	
Moreover, let $E^{X}_k, E^{\mathbb{P}^n}_k$ be the $k$-th exceptional divisors in the blow-ups of $\mathbb{P}^n$ and $X$ respectively, then the order of the exceptional divisors can be reversed in the sense that 
	\[\begin{cases}
		\mu^{(r-k-1)}_{r-1}(E^{X}_k)\cong\tau^{(k-2)}_{r-1}(E_{r-k+1}^{\mathbb{P}^n}), 2\leq k \leq r-1\\
		\mu^{(r-2)}_{r-1}(E^{X}_1)\cong\tau_{r-1}(\mathbb{P}^{n-1}) \\
	    \mu_{r-1}(D)\cong\tau^{(r-2)}_{r-1}(E_{1}^{\mathbb{P}^n})
		 \end{cases}\]
	where we use $\mu^{(r-j-1)}_{r-1}(E^{X}_j), \tau^{(r-j-1)}_{r-1}(E_{j}^{\mathbb{P}^n})$ to denote the strict transforms of $E^{X}_j, E_{j}^{\mathbb{P}^n}$ in the last $(r-j-1)$-times corresponding successive blow-ups. In other words, we can take successive blow-ups on $X$ for $r-1$ times and contract the second to the $(r-1)$-th exceptional divisors in order and finally contract the compactifying divisor we will get the projective space.
\end{mainthm}

	 We will see that $E^{X}_k$ is a $\mathbb{P}^{d_k}$-bundle over $\mu_{k-1}(\mathcal{M}_k)$ where $d_k+1$ is the dimension of the balanced subspace (see Section \ref{tubesub}) with rank $r-k+1$; $E^{\mathbb{P}^n}_k$ is a $\mathbb{P}^{c_k}$-bundle over $\tau_{k-1}(\mathcal{C}
	 _o^k(X))$ where $c_k$ is the dimension of Hermitian characteristic symmetric subspace with $rank=r-k$ (see Section \ref{hermichar}).

Let $Y$ be a subvariety in $X$. The successive strict transform $\mu_{k-1}$ can be given by the following commutative diagram, where the vertical arrows are closed embeddings,  $\sigma_{j}:\text{Bl}_jX\rightarrow \text{Bl}_{j-1}X$ is the $j$-th blow-up morphism. Then the arrows in the first row are just restrictions of $\sigma_1, \cdots, \sigma_k$. Note that $\mu_{k-1}$ does not mean a map, it is used to clarify the notations.
\[	
	\begin{tikzcd}
	\mu_{k-1}(Y)\arrow[hook]{d} \cdots \arrow{r} &	\mu_{j}(Y) \arrow[hook]{d} \cdots \arrow{r}& 
	\mu_1(Y)\arrow[hook]{d}  \arrow{r}& Y\arrow[hook]{d}\\	
	\text{Bl}_{k}X \cdots \arrow{r}{\sigma_{j+1}} &	\text{Bl}_{j}X \cdots \arrow{r}{\sigma_2}& 
	\text{Bl}_{1}X  \arrow{r}{\
	\sigma_1}& X 
	\end{tikzcd}
\] This can be constructed by easy induction where by definition we know $\mu_{j+1}(Y)$ is the Zariski closure of $\sigma^{-1}_{j+1}(\mu_{j}(Y)\backslash\mu_{j}(\mathcal{M}_{j+1}))$ in $\text{Bl}_{j+1}X$. Also $\mu^{(r-k+1)}_{r-1}$, $\tau_{k-1}$ and  $\tau^{(r-k+1)}_{r-1}$ can be given similarly. 

Explicit construction of $\mathcal{M}_k$ will be given in Section \ref{construction}. We will find that $\mathcal{M}_k\backslash \mathcal{M}_{k-1}$ are just the stable manifolds in the Bia{\l}ynicki-Birula decomposition of $X$ with respect to a $\mathbb{C}^*$-action (see Section \ref{BBIHSS} for details). Rank two cases in the Main Theorem can be found in \cite[Chapter III, Theorem 3.8]{MR1234494}, see also \cite{MR3722690}. We now discuss the hyperquadric case as an example.

\begin{example}\label{examhyper}
	The birational map between the projective space $\mathbb{P}^n=\{[x_0,...,x_{n}]\}$ and the hyperquadric $Q^n=\{[z_0,...,z_{n+1}],z_0z_{n+1}=\sum_{i=1}^nz^2_i\}\subset \mathbb{P}^{n+1}$ can be written as follows: \[\varphi([x_0,...,x_n])=[x^2_0,x_0x_1,...,x_0x_n,\sum_{i=1}^nx^2_i]\] where the indeterminacy is $\{x_0=0, \sum_{i=1}^nx^2_i=0\}$ and the image is $\{z_0\neq 0\}\cup \{z_0=z_1=\cdots=z_n=0, z_{n+1}=1\}$. Conversely the inverse rational map can be writte as the projection from the point $[0,...,0,1]$, i.e.
	\[
	\psi([z_0,...,z_{n+1}])=[z_0,...,z_{n}]
	\]
	where the indeterminacy is $\{[0,...,0,1]\}$ and the image is $\{x_0\neq 0\} \cup \{x_0=0,\sum_{i=1}^kx^2_i=0\}$. We have $\text{Bl}_{Q^{n-2}}\mathbb{P}^{n}\cong \text{Bl}_{pt}Q^n$ where $\mathcal{C}_o^1(Q^n)\cong Q^{n-2}\subset \mathbb{P}^{n-1}\subset \mathbb{P}^{n}$ is a hyperquadric and $pt$ means one point. The compactifying divisor $D=\{[0,z_1,\cdots ,z_{n+1}], \sum_{i=1}^nz^2_i=0\}$ is a cone over $Q^{n-2}=\{[0,z_1,\cdots ,z_{n}, 0], \sum_{i=1}^nz^2_i=0\}$ with vertex $[0,...,0,1]$. The exceptional divisor $E^{\mathbb{P}^n}_1$ is a  
	$\mathbb{P}^1$-bundle over $Q^{n-2}$ which is exactly the strict transform of $D$; On the other hand $E^{X}_1\cong \mathbb{P}^{n-1}$ is the strict transform of the hyperplane $\mathbb{P}^{n-1}=\{x_0=0\}\subset \mathbb{P}^n$.
\end{example}

Our main idea is to construct stratifications on $D$ and $\mathbb{P}T_o(X)$ which comes naturally from the rank of $X$ and the well-known polysphere theorem for irreducible compact Hermitian symmetric spaces. 
	\begin{theorem}[The Polysphere Theorem, cf.\cite{MR0404716} or \cite{MR1081948}]
	Suppose that $(X,g)$ is a rank $r$ irreducible Hermitian symmetric space of compact type, $X=G_c/K=G/P$ where $G_c$ is the compact real form of $G$ and $K$ is a maximal compact subgroup of the noncompact real form $G_0$. Then there exists a totally geodesic submanifold $S$ in $X$ isometric to a product of $r$ Riemann spheres equipped with Fubini-Study metric. Moreover $K$-action on $S$ exhausts $X$, i.e. $X=\bigcup_{k\in K}kS$.
\end{theorem}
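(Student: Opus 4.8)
The plan is to reduce the statement to the structure theory of the noncompact dual and Harish--Chandra's construction of strongly orthogonal roots. Write $\mathfrak{g}_0=\mathfrak{k}\oplus\mathfrak{p}$ for the Cartan decomposition of the noncompact real form $G_0$, and identify $T_o(X)\cong\mathfrak{p}$ equipped with the complex structure $J=\operatorname{ad}(Z_0)|_{\mathfrak{p}}$ coming from the central element $Z_0$ of $\mathfrak{k}$. Complexifying gives the Harish--Chandra decomposition $\mathfrak{p}^{\mathbb{C}}=\mathfrak{p}^+\oplus\mathfrak{p}^-$ into the $\pm i$-eigenspaces of $J$, both abelian. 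Fixing a Cartan subalgebra $\mathfrak{t}\subset\mathfrak{k}$ (which is Cartan in $\mathfrak{g}$ by the equal-rank property of Hermitian symmetric spaces) and a positive system with $\mathfrak{p}^+=\sum_{\alpha\in\Delta_n^+}\mathfrak{g}_\alpha$, the roots split into compact and noncompact ones.

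First I would run Harish--Chandra's inductive procedure: let $\gamma_1$ be the highest root (necessarily noncompact here), and let $\gamma_{j+1}$ be the highest noncompact positive root strongly orthogonal to $\gamma_1,\dots,\gamma_j$. The special shape of the restricted root system of an irreducible Hermitian symmetric space (type $C_r$ or $BC_r$) guarantees that this produces exactly $r$ strongly orthogonal roots $\gamma_1,\dots,\gamma_r$, where $r$ is the rank. For each $\gamma_j$ pick root vectors $e_{\pm\gamma_j}\in\mathfrak{g}_{\pm\gamma_j}\subset\mathfrak{p}^{\mathbb{C}}$ normalized by $[e_{\gamma_j},e_{-\gamma_j}]=h_{\gamma_j}$, and set $X_j=e_{\gamma_j}+e_{-\gamma_j}\in\mathfrak{p}$. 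Strong orthogonality forces $[X_i,X_j]=0$, so $\mathfrak{a}=\bigoplus_j\mathbb{R}X_j$ is an $r$-dimensional abelian subspace, and a dimension count against the restricted roots shows it is maximal abelian in $\mathfrak{p}$.

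Next I would assemble the polysphere. Each triple $\{e_{\gamma_j},e_{-\gamma_j},h_{\gamma_j}\}$ spans an $\mathfrak{sl}_2$ whose compact real form $\mathfrak{s}_j\subset\mathfrak{g}_c$ exponentiates in $G_c$ to a subgroup whose orbit $S_j$ through $o$ is a totally geodesic rank-one space, i.e. a Riemann sphere $\mathbb{P}^1$; the normalization of $e_{\pm\gamma_j}$ fixes the radius so that the induced metric is Fubini--Study. Strong orthogonality of the $\gamma_j$ implies $[\mathfrak{s}_i,\mathfrak{s}_j]=0$ for $i\neq j$, so $\mathfrak{s}=\bigoplus_j\mathfrak{s}_j$ is a Lie triple system and the $S_j$ mutually commute; hence $S=\prod_j S_j$ is a totally geodesic submanifold isometric to the Riemannian product $(\mathbb{P}^1)^r$ with the product Fubini--Study metric, and $\operatorname{Exp}_o(\mathfrak{a})\subset S$ is the diagonal flat torus obtained as the product of the equators.

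Finally, the exhaustion $X=\bigcup_{k\in K}kS$ follows from the Cartan/polar decomposition for compact symmetric spaces. Maximal abelian subspaces of $\mathfrak{p}$ are $K$-conjugate and their union is all of $\mathfrak{p}$, so $\mathfrak{p}=\bigcup_{k\in K}\operatorname{Ad}(k)\mathfrak{a}$; since $X$ is complete, the geodesic exponential $\operatorname{Exp}_o\colon\mathfrak{p}\to X$ is surjective, whence $X=\bigcup_k k\cdot\operatorname{Exp}_o(\mathfrak{a})\subset\bigcup_k kS$. The main obstacle is the combinatorial heart of the second step --- proving that Harish--Chandra's strongly orthogonal set has cardinality exactly $r$ and spans a maximal abelian subalgebra --- which rests on the classification-free description of the restricted root system of an irreducible Hermitian symmetric space; once that is in place, the totally geodesic product structure and the exhaustion are formal consequences of standard symmetric-space theory.
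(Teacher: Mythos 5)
The paper itself contains no proof of this statement: it is quoted as a classical theorem with pointers to \cite{MR0404716} and \cite{MR1081948}, so there is no internal argument to compare against; your proposal is, in outline, exactly the standard proof found in those references (Harish--Chandra's maximal strongly orthogonal set $\Pi=\{\gamma_1,\dots,\gamma_r\}$, the commuting $\mathfrak{sl}_2$-triples, the totally geodesic product $(\mathbb{P}^1)^r$, and exhaustion via the polar decomposition), and it is correct in substance. Three small points deserve tightening. First, the totally geodesic criterion applies to Lie triple systems inside $\mathfrak{p}$ (respectively inside $\mathfrak{m}=\sqrt{-1}\,\mathfrak{p}$ on the compact side), so the relevant object is the $\mathfrak{p}$-part $\bigoplus_j\bigl(\mathbb{R}x_{\gamma_j}\oplus\mathbb{R}y_{\gamma_j}\bigr)$ of your $\mathfrak{s}=\bigoplus_j\mathfrak{s}_j$, not $\mathfrak{s}$ itself, which is a subalgebra rather than a triple system; the conclusion is unaffected. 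Second, your exhaustion step is phrased on the noncompact side, while $X=G_c/K$ is compact: one should transport $\mathfrak{a}=\bigoplus_j\mathbb{R}X_j$ to $\mathfrak{a}_c=\sqrt{-1}\,\mathfrak{a}\subset\mathfrak{m}$, where $K$-conjugacy of maximal abelian subspaces and surjectivity of $\operatorname{Exp}_o$ hold verbatim and give $X=\bigcup_{k\in K}k\cdot\operatorname{Exp}_o(\mathfrak{a}_c)\subset\bigcup_{k\in K}kS$, the torus $\operatorname{Exp}_o(\mathfrak{a}_c)$ being the product of great circles through $o$ in the factors $S_j$. Third, deducing $\lvert\Pi\rvert=r$ from the $C_r/BC_r$ shape of the restricted root system is mildly circular, since that description (the paper's Theorem \ref{RRRthm}) is itself formulated relative to the span of $\Pi$; the cleaner route, which is Harish--Chandra's, is to show directly from the maximality of $\Pi$ and strong orthogonality that any element of $\mathfrak{p}$ commuting with $\mathfrak{a}$ already lies in $\mathfrak{a}$, so that $\mathfrak{a}$ is maximal abelian and $\lvert\Pi\rvert=\dim\mathfrak{a}=r$ by the definition of rank. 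With these repairs the argument is complete, and the fact that all $\gamma_j$ have equal length (recorded in Theorem \ref{RRRthm}) justifies your claim that the $r$ sphere factors carry Fubini--Study metrics of the same scale.
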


In fact the Main theorem holds true when $X$ is a polysphere and we will see that this case (although it is not irreducible) actually serves as a model for the problem and the construction for centers is parallel to a general irreducible compact Hermitian symmetric space. The baby case is well-known, where we let $X=\mathbb{P}^1\times \mathbb{P}^1$. We know that if we blow-up $\mathbb{P}^2$ along two distinct points and contract  strict transform of the line connecting these two points we will get $\mathbb{P}^1\times \mathbb{P}^1$. In this case we define the minimal rational curves to be those rational curves of minimal degree with respect to the line bundle $\mathcal{O}(1,1)$ and the VMRT is defined to be two points in the projectivized tangent space (not as the usual definition).

The article is organized as follows. In Section \ref{basics}, we give some preliminaries on Hermitian symmetric spaces.
 In Section \ref{construction}
we give the explicit constructions of the stratifications and the interaction with the Bia{\l}ynicki-Birula decomposition of $X$. In Section \ref{birtrans} we finish the proof of the main theorem.

\subsection*{Notations}In the article we use the following notations. $G(p,q)$ denotes the Grassmannian of $p$-dimensional subspace in a $(p+q)$-dimensional complex vector spaces; $G^{II}(n,n)$ and $G^{III}(n,n)$ denote the orthogonal Grassmannian and Lagrangian Grassmannian respectively;  $Q^n, \mathbb{OP}^2, E_7/P_7$ denote the hyperquadric, the Cayley plane and the Freudenthal variety respectively. 


\section*{Acknowledgement}
I would like to thank Baohua Fu for introducing the problem, and also for some helpful discussions and suggestions. I would also like to thank Hanlong Fang for  drawing my attention on the interation of the construction of centers with the Bia{\l}ynicki-Birula decomposition and Jie Liu for providing the reference \cite{MR2155089}.
I am also grateful to Yifei Chen and Renjie Lyu for some helpful discussions.

\section{Preliminaries in Hermitian symmetric spaces}\label{basics}
We briefly introduce the notion of irreducible compact Hermitian symmetric spaces of tube type, balanced subspaces and characteristic symmetric subspaces in this part. For more information we refer the readers to \cite{MR0404716}, \cite{MR1918134} and \cite{MR1179334}.

\subsection{Hermitian symmetric spaces of tube type} We know an irreducible bounded symmetric domain is said to be of tube type if it is holomorphically equivalent to a tube domain over a self dual cone. Then
an irreducible compact Hermitian symmetric space is called tube type if it is dual to a bounded symmetric domain of tube type. For example, a Lagrangian Grassmannian is dual to a Type III bounded symmetric domain which is biholomorphic to the Siegel upper half plane, so it is of tube type.

Hermitian symmetric spaces of tube type can be described in terms of restricted root systems which will be given precisely in Theorem \ref{RRRthm}. Write $X=G_c/K=G/P$ to be an irreducible compact Hermitian symmetric space with rank $=r$, where $G$ is a connected complex simple Lie group, $P$ is a maximal parabolic subgroup, $G_c$ is a compact real form of $G$ and $K$ is a maximal compact subgroup of the noncompact real form $G_0$.
By Harish-Chandra decomposition, $\mathfrak{g}=\mathfrak{k}^{\mathbb{C}}+\mathfrak{m}^++\mathfrak{m}^-$ where $\mathfrak{g}=Lie(G), \mathfrak{k}=Lie(K), \mathfrak{p}=Lie(P)=\mathfrak{k}^{\mathbb{C}}+\mathfrak{m}^-$. Let $\Delta$ denote the root system and $\Delta^+_{M}$ denote the set of positive noncompact roots whose root spaces are contained in $\mathfrak{m}^+$, other positive roots are called positive compact roots (similarly, negative noncompact and compact roots can be defined). Let $\Pi=\{\alpha_1,\cdots,\alpha_r\}$ be the maximal set of strongly orthogonal roots in $\Delta^+_{M}$ starting from the highest root we know
\begin{theorem}[The Restricted Root Theorem, cf. \cite{MR0161943}]\label{RRRthm}
	Let $\mathfrak{h}^{\mathbb{C}}$ be the 
	Cartan subalgebra.
	Let $\rho$ denote the restriction of roots from $\mathfrak{h}^{\mathbb{C}}$ to $\mathfrak{h}^-=\sum_{\alpha\in \Pi}\sqrt{-1}H_{\alpha}\mathbb{R}$, identify the elements in $\Delta$ with their $\rho$-image, then either $\rho(\Delta)\cup\{0\}=\{\pm\frac{1}{2}\alpha_i\pm\frac{1}{2}\alpha_j:1\leq i,j\leq r\}$ or $\rho(\Delta)\cup\{0\}=\{\pm\frac{1}{2}\alpha_i\pm\frac{1}{2}\alpha_j, \pm\frac{1}{2}\alpha_i:1\leq i,j\leq r\}$. Accordingly $\rho(\Delta^+_M)=\{\frac{1}{2}\alpha_i+\frac{1}{2}\alpha_j:1\leq i,j\leq r\}$ or $\rho(\Delta^+_M)=\{\frac{1}{2}\alpha_i+\frac{1}{2}\alpha_j, \frac{1}{2}\alpha_i:1\leq i,j\leq r\}$. Moreover, all $\alpha_i$ have the same length and the subgroup of the Weyl group of $G$ preserving the compact roots and fixing $\Pi$ as a set induces all signed permutations $\alpha_i\rightarrow \pm\alpha_j$ of $\Pi$.
\end{theorem}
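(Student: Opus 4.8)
The plan is to reduce the whole statement to the computation of the Cartan integers $\langle\alpha,\alpha_i^\vee\rangle=\alpha(H_{\alpha_i})$ for every $\alpha\in\Delta$ and every $\alpha_i\in\Pi$. Since the $\alpha_i$ are strongly orthogonal they are in particular mutually orthogonal, so $\{\tfrac12\alpha_i\}$ is a basis of $(\mathfrak h^-)^*$, and evaluating on the $H_{\alpha_j}$ one checks directly that $\rho(\alpha)=\sum_i\langle\alpha,\alpha_i^\vee\rangle\,\tfrac12\alpha_i$ under the identification of roots with their $\rho$-images; thus the theorem is equivalent to controlling the integer vectors $(\langle\alpha,\alpha_i^\vee\rangle)_i$. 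The engine is the family of $\mathfrak{sl}_2$-triples $\{E_{\alpha_i},H_{\alpha_i},E_{-\alpha_i}\}$: strong orthogonality makes them commute, so they span a subalgebra isomorphic to $\mathfrak{sl}_2^{\oplus r}$ under which $\mathfrak g$ becomes a module whose weights are exactly these integers. Two Hermitian inputs drive the argument: the central element $Z\in\mathfrak k$, whose $\operatorname{ad}$-eigenvalues on $\mathfrak m^-,\mathfrak k^{\mathbb C},\mathfrak m^+$ are $-\sqrt{-1},0,+\sqrt{-1}$, and the abelianness $[\mathfrak m^+,\mathfrak m^+]=0=[\mathfrak m^-,\mathfrak m^-]$.

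For a positive noncompact root $\alpha$ (so $E_\alpha\in\mathfrak m^+$), abelianness gives $\operatorname{ad}(E_{\alpha_i})E_\alpha\in[\mathfrak m^+,\mathfrak m^+]=0$, so $E_\alpha$ is a highest weight vector for each $\mathfrak{sl}_2^{(i)}$ and $\langle\alpha,\alpha_i^\vee\rangle\ge0$. Lowering by $\operatorname{ad}(E_{-\alpha_i})$ drops the $Z$-degree by $\sqrt{-1}$ each time, and since only the three eigenvalues $\pm\sqrt{-1},0$ occur, each $\mathfrak{sl}_2^{(i)}$-string has length at most three, whence $\langle\alpha,\alpha_i^\vee\rangle\le2$. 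Applying the same $Z$-degree bound to mixed lowerings shows more: a triple lowering $\operatorname{ad}(E_{-\alpha_i})\operatorname{ad}(E_{-\alpha_j})\operatorname{ad}(E_{-\alpha_k})E_\alpha$ with distinct $i,j,k$ reaches degree $-2\sqrt{-1}$ and so vanishes, which by the tensor-product structure of $\mathfrak{sl}_2^{\oplus r}$-modules forces at most two of the $\langle\alpha,\alpha_i^\vee\rangle$ to be nonzero; similarly $\operatorname{ad}(E_{-\alpha_i})^2\operatorname{ad}(E_{-\alpha_j})E_\alpha$ vanishes, so a value $2$ cannot coexist with another nonzero value. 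Hence $(\langle\alpha,\alpha_i^\vee\rangle)_i$ is either a single $2$, a single $1$, or two $1$'s, i.e. $\rho(\alpha)\in\{\tfrac12\alpha_i+\tfrac12\alpha_j\}\cup\{\tfrac12\alpha_i\}$. Moreover $\rho(\alpha)\ne0$: if $\alpha$ were orthogonal to all $\alpha_i$, then abelianness would upgrade this to strong orthogonality (both $\alpha\pm\alpha_i$ fail to be roots), contradicting the maximality of $\Pi$. This establishes the inclusion $\rho(\Delta^+_M)\subseteq\{\tfrac12\alpha_i+\tfrac12\alpha_j:i\le j\}\cup\{\tfrac12\alpha_i\}$, together with the dichotomy: the non-tube (type $BC$) case occurs precisely when some single $\tfrac12\alpha_i$ is attained.

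It remains to produce the differences $\tfrac12\alpha_i-\tfrac12\alpha_j$ (which must come from compact roots), to prove the common length, and to establish the Weyl-group statement. Here I would pass to the dual noncompact real form $\mathfrak g_0=\mathfrak k_0\oplus\mathfrak p_0$ and, via the Cayley transform of the triples above, identify $\mathfrak h^-$ with a maximal abelian subspace $\mathfrak a\subset\mathfrak p_0$; then $\rho(\Delta)\setminus\{0\}$ is the restricted root system $\Sigma(\mathfrak g_0,\mathfrak a)$, a genuine (possibly non-reduced) root system whose Weyl group $N_K(\mathfrak a)/Z_K(\mathfrak a)$ is realized by elements of $K$. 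Such elements centralize $Z$, hence preserve the $Z$-grading and the compact roots, and they stabilize $\mathfrak h^-$ while permuting the distinguished roots $\{\pm\alpha_i\}$ (using that $\alpha_i$ is the unique root restricting to $2\cdot\tfrac12\alpha_i$). Since the previous step already places $\tfrac12(\alpha_i+\alpha_j)$ and $\alpha_i$ in $\Sigma$, the system $\Sigma$ must be $C_r$ or $BC_r$, whose Weyl group is the full hyperoctahedral group of signed permutations; its transitivity on $\{\pm\alpha_i\}$ yields the common length (these elements preserve the Killing form and permute the actual roots $\alpha_i$), the missing restricted roots $\pm\tfrac12(\alpha_i-\alpha_j)$ appear as images of $\tfrac12(\alpha_i+\alpha_j)$ under the sign changes, and the claimed signed-permutation action on $\Pi$ follows.

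The representation-theoretic half is essentially forced and I expect it to be routine. The real difficulty is the Weyl-group half, and specifically the sign changes $\alpha_i\mapsto-\alpha_i$: these cannot be realized by complex Weyl reflections preserving compactness (the reflection $s_{\alpha_i}$ sends the compact root $\tfrac12(\alpha_i-\alpha_j)$ to the noncompact $-\tfrac12(\alpha_i+\alpha_j)$), so the signed permutations genuinely require the compact group $K$ rather than the combinatorics of $\Delta$. Consequently the crux is the real-form input: verifying that $\mathfrak a=\mathfrak h^-$ is maximal abelian in $\mathfrak p_0$ (equivalently that $\lvert\Pi\rvert$ equals the rank $r$ of $X$) and that $\Sigma$ contains no restricted roots beyond those listed, so that its Weyl group is exactly the hyperoctahedral group. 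This is the part of the statement imported from Moore's analysis \cite{MR0161943}.
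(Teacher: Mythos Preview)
The paper does not prove this theorem at all: it is stated with the attribution ``cf.\ \cite{MR0161943}'' and used as a black box throughout, so there is no proof in the paper to compare against. Your sketch is essentially the standard argument (as in Moore and in Wolf's exposition): bound the Cartan integers $\langle\alpha,\alpha_i^\vee\rangle$ via the $Z$-grading and the abelianness of $\mathfrak m^\pm$, then identify $\mathfrak h^-$ with a maximal abelian $\mathfrak a\subset\mathfrak p_0$ so that $\rho(\Delta)\setminus\{0\}$ becomes the restricted root system of type $C_r$ or $BC_r$, whose Weyl group supplies the signed permutations. That is the right shape.

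One point deserves care. You correctly observe that the reflection $s_{\alpha_i}$ does \emph{not} preserve compactness, and you conclude that the sign changes ``genuinely require the compact group $K$ rather than the combinatorics of $\Delta$''. But the theorem as stated asks for elements of the Weyl group of $G$ (not of $K$) that preserve the set of compact roots and stabilize $\Pi$; so you still need to produce such Weyl elements. The resolution is that $N_K(\mathfrak a)/Z_K(\mathfrak a)$ is realized by elements of $K$ which normalize the full Cartan $\mathfrak h$ (not just $\mathfrak a$), hence represent elements of $W(G)$; since they lie in $K$ they commute with $Z$ and therefore preserve the compact/noncompact dichotomy. Your Cayley-transform paragraph gestures at this, but as written it stops at ``realized by elements of $K$'' without closing the loop back to $W(G)$. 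Make that step explicit and the argument is complete.
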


Moreover, this gives two cases for the restricted compact and noncompact roots, 
\begin{enumerate}
	\item Tube type, which is corresponding to the first case in the theorem. In that case, nonzero $\rho$-image of some subsets of $\Delta$ are given by 
	\begin{itemize}
		\item Compact simple roots $\{\frac{1}{2}(\alpha_{t+1}-\alpha_{t}), 1\leq t \leq  r-1\}$
		\item Compact positive roots $\{\frac{1}{2}(\alpha_{s}-\alpha_{t}), 1\leq t <s\leq  r\}$
		\item Noncompact positive roots $\{\frac{1}{2}(\alpha_{s}+\alpha_{t}), 1\leq t \leq s\leq  r\}$
	\end{itemize}
	\item Non-tube type, which is corresponding to the second case in the theorem. Then nonzero $\rho$-image of some subsets of $\Delta$ are given by 
	\begin{itemize}
		\item Compact simple roots $\{\frac{1}{2}(\alpha_{t+1}-\alpha_{t}), 1\leq t \leq  r-1\}\cup \{-\frac{1}{2}\alpha_{r}\}$
		\item Compact positive roots $\{\frac{1}{2}(\alpha_{s}-\alpha_{t}), 1\leq t <s\leq  r\}\cup \{-\frac{1}{2}\alpha_{t}, 1\leq t\leq r\}$
		\item Noncompact positive roots $\{\frac{1}{2}(\alpha_{s}+\alpha_{t}), 1\leq t \leq s\leq  r\}\cup \{\frac{1}{2}\alpha_{t}, 1\leq t\leq r\}$
	\end{itemize}
\end{enumerate}
 Hermitian symmetric spaces of tube type with rank $\geq 2$ can be listed as
(1) $G(n,n)$ with $n\geq 2$; (2) $G^{II}(n,n)$ with $n$ even and $n\geq 4$;
(3) $G^{III}(n,n)$ with $n\geq 2$;
(4) $Q^n$ with $n\geq 3$;
(5) $E_7/P_7$.
 Also they can be also characterized by generic tangent vectors (i.e. tangent vectors of maximal rank), where rank of a holomorphic tangent vector is given by the following (see \cite[p.234]{MR1081948}).
\begin{prop-def}\label{HSS-normal}
	For a nonzero holomorphic tangent vector $v\in T_o(X)$, there exists some $k\in K$ such that $Adk.v=\sum_{i=1}^{r_0}a_ie_{\alpha_i}$ with $a_1\geq a_2 \cdots \geq a_{r_0}>0$ for some $1\leq r_0\leq r$. Moreover by $\mathbb{C}^*$-action induced by the Cartan subalgebra $\mathfrak{h}^{\mathbb{C}}$,
	there exists some $p\in P$ such that $Adp.v=\sum_{i=1}^{r_0}e_{\alpha_i}$. Here $r_0$ is called the rank of $v$.
\end{prop-def}

We explain this proposition using the Grassmannian $X=G(p,q)$. In this case, $K=S(U(p)\times U(q))$ and $K^{\mathbb{C}}=S(GL(p,\mathbb{C})\times GL(q, \mathbb{C}))$ is the reductive part of $P$, and $T_o(X)\cong \mathfrak{m}^+$ can be identified with the set of $p\times q$ matrices. The actions of $K$ and $P$ are elementary transformations on matrices and the rank of a tangent vector is precisely the rank of the corresponding matrix.
The VMRT of $X$ is exactly the collection of projectivization of rank one tangent vecters at $o$. Then Hermitian symmetric spaces of tube type can be characterized by the following proposition (see \cite[Proposition 1]{MR1918134}). 

\begin{proposition}\label{tubehyper}
	The set of generic tangent vectors (i.e. tangent vectors of maximal rank) is equal to the complement of a (degree $r$) hypersurface in $\mathbb{P}T_o(X)$ if and only if $X$ is of tube type.
\end{proposition}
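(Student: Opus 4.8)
The plan is to work entirely on the linear isotropy representation. Identify $T_o(X)\cong\mathfrak{m}^+$ and recall from Proposition-Definition \ref{HSS-normal} that every nonzero $v$ is $K$-conjugate to a diagonal element $\sum_{i=1}^{r_0}a_ie_{\alpha_i}$, so that $\operatorname{rank}(v)$ counts the nonzero diagonal entries and the maximal-rank vectors form a single orbit $\Omega=K^{\mathbb{C}}\cdot(\sum_{i=1}^re_{\alpha_i})$. Thus the set in question is the complement $\mathfrak{m}^+\setminus\Omega$, a closed $K^{\mathbb{C}}$-invariant cone equal to the orbit closure $\overline{K^{\mathbb{C}}\cdot w}$ of a generic rank-$(r-1)$ element $w=\sum_{i=1}^{r-1}e_{\alpha_i}$ (the lower strata lie in this closure by letting diagonal entries degenerate). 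Since it is a cone, it is a hypersurface in $\mathbb{P}T_o(X)$ exactly when it has codimension one in $\mathfrak{m}^+$, and the whole statement reduces to computing this codimension from the restricted root data of Theorem \ref{RRRthm}.

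For the codimension I would compute the transverse slice $\mathfrak{m}^+/[\mathfrak{k}^{\mathbb{C}},w]$ to the orbit at $w$, where $\xi\mapsto[\xi,w]=\sum_{i=1}^{r-1}[\xi,e_{\alpha_i}]$. Writing $\alpha_i=2e_i$ in the notation of Theorem \ref{RRRthm}, the compact roots are the $e_s-e_t$ (together with the short roots $-e_t$ in the non-tube case). The $\mathfrak{sl}_2$-triple generated by $e_{\pm\alpha_i}$ makes $\operatorname{ad}e_{\alpha_i}$ carry $\mathfrak{g}_{\frac12(\alpha_s-\alpha_i)}$ isomorphically onto $\mathfrak{g}_{\frac12(\alpha_s+\alpha_i)}$ (and, in the non-tube case, $\mathfrak{g}_{-\frac12\alpha_i}$ onto $\mathfrak{g}_{\frac12\alpha_i}$), while $[\mathfrak{g}_{e_s-e_i},\mathfrak{g}_{\alpha_j}]=0$ for $j\neq i$ because $e_s-e_i+2e_j$ is not a noncompact root. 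Choosing one compact root at a time then isolates individual root spaces, and shows $[\mathfrak{k}^{\mathbb{C}},w]$ equals the sum of all noncompact root spaces $\mathfrak{g}_{\frac12(\alpha_s+\alpha_t)}$ with $\min(s,t)\le r-1$, together with all $\mathfrak{g}_{\frac12\alpha_i}$, $i\le r-1$, in the non-tube case. Conversely no single bracket can reach $\mathfrak{g}_{\alpha_r}$ or $\mathfrak{g}_{\frac12\alpha_r}$, since neither $\alpha_r-\alpha_i$ nor $\frac12\alpha_r-\alpha_i$ is a compact root. Hence the transverse slice is exactly $\mathfrak{g}_{\alpha_r}$ in the tube case and $\mathfrak{g}_{\alpha_r}\oplus\mathfrak{g}_{\frac12\alpha_r}$ in the non-tube case, so the codimension of $\mathfrak{m}^+\setminus\Omega$ equals $1+\dim\mathfrak{g}_{\frac12\alpha_r}$. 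In tube type this is $1$ and the complement is a hypersurface; in non-tube type the short root $\frac12\alpha_r$ has positive multiplicity, the codimension is at least $2$, and the complement is not a hypersurface.

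It remains to identify the degree in the tube case. Here I would invoke the Chevalley restriction theorem for the symmetric pair, which identifies $K^{\mathbb{C}}$-relative invariants on $\mathfrak{m}^+$ with signed-permutation-invariants on the flat $\mathfrak{a}=\bigoplus_{i=1}^r\mathbb{C}e_{\alpha_i}$ without change of degree, the signed permutations of the $\alpha_i$ being supplied by Theorem \ref{RRRthm}. The reduced defining polynomial $f$ of the codimension-one cone $\mathfrak{m}^+\setminus\Omega$ is such a relative invariant, and its restriction cuts out $(\mathfrak{m}^+\setminus\Omega)\cap\mathfrak{a}=\{\prod_{i=1}^ra_i=0\}$, a union of $r$ hyperplanes whose reduced equation $\prod_ia_i$ is semi-invariant of degree $r$; therefore $\deg f=r$, giving the asserted degree-$r$ hypersurface after projectivizing. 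Equivalently, in tube type $\mathfrak{m}^+$ is a rank-$r$ complex Jordan algebra with unit $\sum_ie_{\alpha_i}$ and $f$ is its generic norm, a degree-$r$ determinant whose nonvanishing locus is precisely the invertible, i.e. maximal-rank, elements.

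The main obstacle is the transverse-slice computation of the second paragraph: one must verify, purely from the bracket relations encoded in Theorem \ref{RRRthm}, that $[\mathfrak{k}^{\mathbb{C}},w]$ hits \emph{exactly} the noncompact root spaces carrying an index $\le r-1$ and misses $\mathfrak{g}_{\alpha_r}$ and $\mathfrak{g}_{\frac12\alpha_r}$. The subtlety is that $w$ is not a weight vector, so one cannot simply read off root spaces; the isomorphism property of $\operatorname{ad}e_{\alpha_i}$ along each $\alpha_i$-string (for surjectivity onto the reached spaces) and the vanishing $[\mathfrak{g}_{e_s-e_i},\mathfrak{g}_{\alpha_j}]=0$ for $j\neq i$ (for isolating them) are precisely what make the count exact. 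Everything else—the single-orbit statement, the reduction to codimension, and the Chevalley-restriction degree count—is then routine.
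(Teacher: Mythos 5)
You should first note that the paper contains no proof of this proposition at all: it is imported verbatim from \cite[Proposition 1]{MR1918134}, so there is no in-paper argument to compare against, and your blind proof is necessarily an independent one. Judged on its own, it is essentially correct. The reduction to a codimension count is sound: the rank strata are exactly the $K^{\mathbb{C}}$-orbits (your normalization via Proposition-Definition \ref{HSS-normal} plus scaling by $\exp(\mathfrak{h}^{\mathbb{C}})\subset K^{\mathbb{C}}$, which acts on $\sum a_ie_{\alpha_i}$ with independent weights $\alpha_i(h)$, handles the single-orbit claim), the complement of the open stratum is the irreducible cone $\overline{K^{\mathbb{C}}\cdot w}$, and an irreducible codimension-one cone in affine space is a hypersurface. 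The transverse-slice computation, which you correctly flag as the crux, also checks out: $[\mathfrak{h}^{\mathbb{C}},w]=\bigoplus_{i\leq r-1}\mathbb{C}e_{\alpha_i}$ since the $\alpha_i$ are linearly independent; for each compact $\xi\in\mathfrak{g}_{\phi}$ with $\rho(\phi)=\frac12(\alpha_s-\alpha_i)$ the sum $[\xi,w]=\sum_j[\xi,e_{\alpha_j}]$ collapses to the single term $[\xi,e_{\alpha_i}]$ because $\frac12(\alpha_s-\alpha_i)+\alpha_j$ is never a restricted root for $j\neq i$ (so no cancellation issues arise even though $w$ is not a weight vector); surjectivity onto each reached restricted root space follows from $\langle\beta,\alpha_i^{\vee}\rangle=1$ for $\rho(\beta)=\frac12(\alpha_s+\alpha_i)$, as restriction to $\mathfrak{h}^-$ is an orthogonal projection; and nothing reaches the restricted spaces over $\alpha_r$ or $\frac12\alpha_r$. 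The resulting codimension $1+\dim\mathfrak{g}^{res}_{\frac12\alpha_r}$ is consistent with the classical examples (for $G(p,q)$ it gives $1+(p-q)=p-q+1$, the codimension of the rank-$(q-1)$ determinantal locus; for $G^{II}(n,n)$ with $n$ odd it gives $1+2=3$), and the tube-case degree $r$ coheres with the paper's own Lemma \ref{degreevmrt} (determinant, Pfaffian, quadric, Cartan cubic) via Lemma \ref{secantrank}, which identifies your cone with the affine cone over $\mathcal{C}_o^{r-1}(X)$.

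The one soft spot is the final degree count: from ``the restriction cuts out $\{\prod_ia_i=0\}$'' you may only conclude $f|_{\mathfrak{a}}=c\prod_ia_i^{m}$ with a common exponent $m\geq 1$ (equality of the exponents coming from the signed-permutation group of Theorem \ref{RRRthm}); the jump to $\deg f=r$ needs $m=1$, i.e.\ that the restriction of the reduced equation stays reduced, which the Chevalley restriction theorem by itself does not give. This is, however, repaired by your own slice computation: $w=\sum_{i\leq r-1}e_{\alpha_i}$ lies in the open orbit of $V(f)$, hence is a smooth point of the reduced hypersurface, so $\ker df_w=T_wV(f)=[\mathfrak{k}^{\mathbb{C}},w]$, and since $e_{\alpha_r}\notin[\mathfrak{k}^{\mathbb{C}},w]$ the line $t\mapsto w+te_{\alpha_r}\subset\mathfrak{a}$ meets $V(f)$ simply at $w$, forcing $m=1$. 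Alternatively your closing remark already contains the standard fix: in tube type $\mathfrak{m}^+$ is a unital Jordan algebra of rank $r$ and $f$ is its generic norm, of degree exactly $r$, with nonvanishing locus the invertible elements. With that one sentence added, the argument is complete.
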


\begin{table}[h] 
	
	\caption{VMRTs for irreducible compact Hermitian symmetric spaces}
	\begin{tabular}{|p{9em}|p{8em}|p{10em}|}
		\hline
		$X$  & the VMRT $\mathcal{C}_o^1(X)$   & Embedding in $\mathbb{P}T_o(X)$ \\
		\hline
		$G(p,q)$   &  $\mathbb{P}^{p-1}\times \mathbb{P}^{q-1}$ & Segre embedding\\
		\hline
		$G^{II}(n,n)$ & $G(2,n-2)$ & Pl\"{u}ker embedding\\
			\hline
		$G^{III}(n,n)$ & $\mathbb{P}^n$ & Veronese embedding \\
			\hline
		$Q^n$ & $Q^{n-2}$ & $\mathcal{O}(1)$ \\ 	\hline
		
		$\mathbb{OP}^2$ & $G^{II}(5,5)$ & $\mathcal{O}(1)$\\
			\hline
		$E_7/P_7$ & $\mathbb{OP}^2$ &  Severi \\
		\hline
	\end{tabular}
\end{table}
\subsection{Balanced subspaces}\label{tubesub}
Next we introduce the notion of balanced subspace with rank $k$. They are a class of special Hermitian symmetric subspaces. First we introduce the notion of invariantly geodesic subspace which was discussed in \cite{MR1198602} and can be characterized in terms of Lie triple systems. 

\begin{definition}
	Fix a canonical K\"ahler-Einstein metric $h$ on $X=G/P$. A complex submanifold $S\subset X$ is said to be invariantly geodesic if and only of $g(S)$ is totally geodesic in $(X,h)$ for any $g\in G$.
\end{definition}

Full classification of invariantly geodesic subspaces can be found in \cite{MR1198602}.
These Hermitian symmetric subspaces are actually associted to subdiagrams of the marked Dynkin diagram of $X$ (see Definition \ref{subdiagram}) and vice versa. From \cite[Proposition 3.7]{MR3019452} (see also \cite{MR1703350}), they are smooth Schubert varieties in $X$ and vice versa.
\begin{definition}\label{subdiagram}
	Let $(S, X)$ be a pair of irreducible Hermitian symmetric spaces
	associated to the marked Dynkin diagrams $(\mathcal{D}_0, \gamma_0)$ and $(\mathcal{D}, \gamma)$ respectively, then the pair is called subdiagram type if $\mathcal{D}_0$ can be obtained from a subdiagram of $\mathcal{D}$ by identifying $\gamma_0$ with $\gamma$. Then $S$ is said to be associated to a subdiagram of the marked Dykin diagram of $X$.
\end{definition} 

We give an example of a Hermitian symmetric subspace $Q^8\subset \mathbb{OP}^2$ associated to a subdiagram of $E_6$ here (see Figure \ref{E6figure}, where the black node $\alpha_1$ is marked).
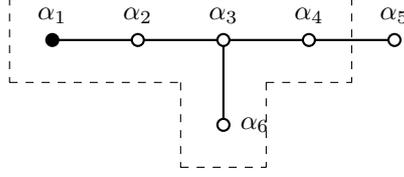
\begin{figure}[h]
	\centering
	\caption{Marked Dyndin diagram of $E_6$ and the subdiagram associated to $Q^8$} \label{E6figure} 
	\begin{tikzpicture}[scale=0.75]
	\draw[dashed] (-0.75,0.75) -- (-0.75, -0.75)
	(-0.75,0.75) -- (5.25, 0.75)   (-0.75,-0.75) -- (2.25, -0.75) 
	(2.25,-0.75) -- (2.25, -2.25) 
	(3.75,-0.75) -- (3.75, -2.25) 
	(2.25, -2.25) -- (3.75, -2.25) 
	(3.75,-0.75) -- (5.25, -0.75) 
	(5.25, 0.75) -- (5.25, -0.75) ;
	\draw[thick] (0,0) -- (6,0)  (3,0) -- (3,-1.5) ;
	
	\draw[ thick, fill=black] (0,0) circle (3pt) node[above, outer sep=3pt]{$\alpha_1$};
	\draw[ thick, fill=white] (1.5,0) circle (3pt) node[above, outer sep=3pt]{$\alpha_2$};
	\draw[ thick, fill=white] (3,0) circle (3pt) node[above, outer sep=3pt]{$\alpha_3$};
	\draw[ thick, fill=white] (4.5,0) circle (3pt) node[above,outer sep=3pt]{$\alpha_4$};
	\draw[ thick, fill=white] (6,0) circle (3pt) node[above,outer sep=3pt]{$\alpha_5$};
	\draw[ thick, fill=white] (3,-1.5) circle (3pt) node[right,outer sep=3pt]{$\alpha_6$};
	\end{tikzpicture} 	
\end{figure}

 Then for our convenience in this article we introduce the following notion.
 \begin{definition}[Balanced subspace]
 We say a complex submanifold $S$ is a balanced subspace if it is an invariantly geodesic subspace and it is of tube type.
 \end{definition}
 
 From \cite{MR1198602} we have the full classification for balanced subspaces, see Table \ref{subspace}. From the perspective of Restricted Root Theorem we can easily observe that 
 
 \begin{proposition}\label{tubetangent}
 Up to $P$-action, the tangent space at $o$ of a balanced subspace with rank $k$ can be identified with 
 \[
 \mathbb{C}\{e_{\beta}, \beta|_{\mathfrak{h}^-}=\frac{1}{2}(\alpha_{\ell}+\alpha_j), (1\leq \ell \leq j\leq k) \}\]
  \end{proposition}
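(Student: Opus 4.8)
The plan is to identify the holomorphic tangent space $\mathfrak{s}^+ := T_o(S)$ explicitly as a sum of noncompact positive root spaces inside $\mathfrak{m}^+ = T_o(X)$ and then to read off the restricted roots from the tube-type description in Theorem \ref{RRRthm}. Throughout I use that a balanced subspace is by definition invariantly geodesic and of tube type, and hence, by the classification in Table \ref{subspace}, is associated to a subdiagram of the marked Dynkin diagram of $X$ (Definition \ref{subdiagram}).

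First I would record that, being invariantly geodesic, $S$ is totally geodesic through $o$, so $\mathfrak{s}^+$ is the $(1,0)$-part of a complex Lie triple system and $(S,h|_S)$ is itself a rank-$k$ tube-type Hermitian symmetric space. Because $S$ arises from a subdiagram, it is a smooth Schubert variety through $o$ (as recalled above), so $\mathfrak{s}^+$ is stable under the Cartan torus and is therefore a sum of root spaces $\bigoplus_{\beta \in \Phi} \mathbb{C}e_\beta$ for some subset $\Phi \subset \Delta_M^+$; the whole task is to show $\Phi = \{\beta : \beta|_{\mathfrak{h}^-} = \frac{1}{2}(\alpha_\ell + \alpha_j),\ 1 \le \ell \le j \le k\}$.

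Next, since $S$ has rank $k$, I would choose a generic vector $v \in \mathfrak{s}^+$ of rank $k$ and normalize it via Proposition-Definition \ref{HSS-normal}. The permutation part of the signed-permutation subgroup appearing in Theorem \ref{RRRthm} is realized inside $K^{\mathbb{C}} \subset P$, so after a $P$-action I may assume that the $k$ strongly orthogonal roots of $S$ are exactly $\alpha_1, \dots, \alpha_k$ and that $v = e_{\alpha_1} + \cdots + e_{\alpha_k}$; in particular $\alpha_1, \dots, \alpha_k \in \Phi$. Restricting the tube-type list of noncompact positive roots in Theorem \ref{RRRthm} to the indices $1, \dots, k$ produces the candidate set on the right-hand side, and the Lie triple closure of $\mathfrak{s}^+$ generates each such $e_\beta$ from $e_{\alpha_1}, \dots, e_{\alpha_k}$ through brackets of the form $[[e_{\alpha_i}, e_{-\alpha_j}], e_{\alpha_\ell}]$ together with the root-string relations, giving the inclusion $\Phi \supseteq \{\beta : \beta|_{\mathfrak{h}^-} = \frac{1}{2}(\alpha_\ell + \alpha_j),\ 1\le \ell \le j \le k\}$.

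Finally I would close the argument by a dimension count: the right-hand side already carries the dimension of a rank-$k$ tube-type space, so any additional root space in $\Phi$ would force a generic vector of $\mathfrak{s}^+$ to have rank strictly greater than $k$, contradicting that $S$ has rank $k$; hence equality holds and the proposition follows. The main obstacle is the middle step of pinning down $\Phi$ exactly, where the tube-type hypothesis must be invoked to guarantee that $\Phi$ is precisely the symmetric lower-triangular $k \times k$ block of restricted roots rather than some other rank-$k$ configuration; the matrix model $G(p,q)$, in which the claim reduces to the statement that the sub-Grassmannian $G(k,k)$ has tangent space the $k \times k$ submatrices, serves as a convenient sanity check.
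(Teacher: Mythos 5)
Your overall skeleton (torus-stability of $T_o(S)$, normalizing the strongly orthogonal roots of $S$ to $\alpha_1,\dots,\alpha_k$ via Proposition-Definition \ref{HSS-normal} and the signed permutations in Theorem \ref{RRRthm}, then reading off restricted roots) is in the spirit of the paper, which offers no written proof beyond asserting the statement as an easy observation from the Restricted Root Theorem together with the classification of balanced subspaces in Table \ref{subspace}. But both of your substantive steps fail as written. First, the generation step: by strong orthogonality $[e_{\alpha_i},e_{-\alpha_j}]=\delta_{ij}h_{\alpha_i}$ and $\alpha_\ell(h_{\alpha_i})=2\delta_{i\ell}$, so brackets of the form $[[e_{\alpha_i},e_{-\alpha_j}],e_{\alpha_\ell}]$ produce nothing beyond $\bigoplus_i\mathbb{C}e_{\alpha_i}$. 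The Lie triple system generated by $e_{\alpha_1},\dots,e_{\alpha_k}$ is exactly the polysphere direction, and the polysphere $(\mathbb{P}^1)^k$ is itself a rank-$k$ totally geodesic submanifold containing your normalized $v$, so no bracket computation internal to $\{e_{\pm\alpha_i}\}$ can force the off-diagonal restricted root spaces into $T_o(S)$. To get the inclusion ``$\supseteq$'' you must use the stronger invariance enjoyed by invariantly geodesic subspaces, namely $[[\mathfrak{s}^+,\mathfrak{m}^-],\mathfrak{s}^+]\subseteq\mathfrak{s}^+$ with the middle slot ranging over \emph{all} of $\mathfrak{m}^-$ (e.g.\ bracketing $e_{\alpha_\ell},e_{\alpha_j}$ against $e_{-\gamma}$ with $\gamma|_{\mathfrak{h}^-}=\frac{1}{2}(\alpha_\ell+\alpha_j)$), or simply quote Table \ref{subspace} and check case by case that the restricted root multiplicities of $S$ and $X$ agree (multiplicity $2$ for $G(k,k)\subset G(p,q)$, $4$ for $G^{II}(2k,2k)\subset G^{II}(n,n)$, etc.); this multiplicity matching is precisely the content your argument skips, and it is where ``balanced'' rather than merely ``rank $k$ and tube type inside $X$'' does its work.

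Second, your closing dimension count is false: adjoining an extra root space need not raise the generic rank. In $G(p,q)$ take $\mathfrak{s}^+=\mathbb{C}\{E_{ij}:1\leq i,j\leq k\}\oplus\mathbb{C}E_{m\ell}$ with $k<m\leq q$ and $\ell\leq k$, so the extra root restricts to $\frac{1}{2}(\alpha_\ell+\alpha_m)$ with $m>k$; every matrix in this enlarged space is supported in columns $1,\dots,k$ and hence has rank at most $k$, so the generic rank is still $k$ and your contradiction never materializes. What actually excludes such roots is restricted-root bookkeeping, not rank: any $\beta$ with $e_\beta\in T_o(S)$ restricts, on the span of the coroots of $\alpha_1,\dots,\alpha_k$, to some $\frac{1}{2}(\alpha_\ell+\alpha_j)$ with $\ell\leq j\leq k$ because $S$ is tube type (its restricted noncompact roots have no $\frac{1}{2}\alpha_\ell$ terms), and then Theorem \ref{RRRthm} for $X$ --- restricted roots are sums of at most two half-terms --- leaves no room for a component along $\alpha_{k+1},\dots,\alpha_r$, forcing $\beta|_{\mathfrak{h}^-}=\frac{1}{2}(\alpha_\ell+\alpha_j)$ exactly. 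With that inclusion ``$\subseteq$'' in hand, equality follows from the multiplicity/dimension comparison above. (A smaller, repairable point: after applying a general $p\in P$ to normalize $v$, torus-stability of $T_o(S)$ is no longer automatic; normalize instead with elements of $K$ inducing the signed permutations of $\Pi$, under which torus-stability is preserved.)
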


\subsection{Hermitian characteristic symmetric subspaces}\label{hermichar}
We refer the readers to \cite{MR0404716} for this part.
Let $\mathfrak{g}_{\Lambda}$ be the derived algebra of 
\[\mathfrak{h}^{\mathbb{C}}+\sum_{\phi \perp \Pi\backslash \Lambda}\mathfrak{g}_{\phi}\]
 where the orthogonality is with respect to the metric induced by the killing form where $(\phi,\psi)=B(H_{\phi}, H_{\psi})$.
 \begin{definition}[Hermitian characteristic symmetric subspaces]
  Let $G_{\Lambda}$ be the Lie subgroup in $G$ for $\mathfrak{g}_{\Lambda}$ and $X_{\Lambda}$ be the orbit $G_{\Lambda}.o$ on $X$. Then the Hermitian symmetric subspace $X_{\Lambda}$ is called $|\Lambda|$-th characteristic symmetric subspaces.
\end{definition}
Also Hermitian characteristic symmetric subspaces are invariantly geodesic in $X$, they are smooth Schubert varieties and they are associated to subdiagrams of the marked Dynkin diagram of $X$. We can observe that
\begin{proposition}\label{chactertangent}
$T_o(X_\Lambda)$ can be identified with
\[\mathfrak{m}_{\Lambda}^+:=\mathbb{C}\{e_{\beta}, \beta|_{\mathfrak{h}^-}=\frac{1}{2}(\alpha_{\ell}+\alpha_j)\ \text{or} \ \frac{1}{2}\alpha_{\ell},   \alpha_{\ell},\alpha_j\in \Lambda\}\] 
in terms of restricted roots.
\end{proposition}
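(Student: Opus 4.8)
The plan is to establish Proposition \ref{chactertangent} by unwinding the definition of the Hermitian characteristic symmetric subspace $X_\Lambda$ and computing which noncompact positive root spaces survive in its tangent space at $o$. Since $X_\Lambda = G_\Lambda.o$ with $G_\Lambda$ the connected subgroup whose Lie algebra $\mathfrak{g}_\Lambda$ is the derived algebra of $\mathfrak{h}^{\mathbb{C}} + \sum_{\phi \perp \Pi \backslash \Lambda} \mathfrak{g}_\phi$, the key observation is that the tangent space $T_o(X_\Lambda)$ is identified with $\mathfrak{g}_\Lambda \cap \mathfrak{m}^+$, because the isotropy at $o$ absorbs the $\mathfrak{k}^{\mathbb{C}}$ and $\mathfrak{m}^-$ parts. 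So the real content is to determine $\mathfrak{g}_\Lambda \cap \mathfrak{m}^+ = \mathfrak{m}_\Lambda^+$ explicitly in terms of restricted roots.

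First I would fix the maximal strongly orthogonal set $\Pi = \{\alpha_1,\dots,\alpha_r\}$ and recall from the Restricted Root Theorem (Theorem \ref{RRRthm}) that the restricted noncompact positive roots are exactly the functionals $\tfrac{1}{2}(\alpha_\ell + \alpha_j)$ (with $1 \le \ell \le j \le r$) in the tube case, together with the extra $\tfrac{1}{2}\alpha_\ell$ in the non-tube case. A root space $\mathfrak{g}_\phi$ enters the generating set for $\mathfrak{g}_\Lambda$ precisely when $\phi \perp \Pi \backslash \Lambda$, i.e. $(\phi, \alpha_m) = 0$ for every $\alpha_m \notin \Lambda$. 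Translating this orthogonality condition through the restriction map $\rho$ and using that the $\alpha_i$ are pairwise orthogonal of equal length, I would show that a noncompact positive root $\beta$ with $\rho(\beta) = \tfrac{1}{2}(\alpha_\ell + \alpha_j)$ satisfies the orthogonality condition if and only if both $\alpha_\ell, \alpha_j \in \Lambda$ (and similarly $\rho(\beta) = \tfrac{1}{2}\alpha_\ell$ survives iff $\alpha_\ell \in \Lambda$). The point is that $\tfrac{1}{2}(\alpha_\ell+\alpha_j)$ pairs nontrivially with $\alpha_m$ exactly when $m \in \{\ell, j\}$, so requiring $(\phi, \alpha_m)=0$ for all $\alpha_m \in \Pi\backslash\Lambda$ forces the indices $\ell, j$ to lie in $\Lambda$.

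The remaining step is to pass from the generating set $\mathfrak{h}^{\mathbb{C}} + \sum_{\phi \perp \Pi\backslash\Lambda}\mathfrak{g}_\phi$ to its derived algebra and confirm that taking the derived algebra does not lose any of the noncompact root spaces identified above; this is routine since each such $e_\beta$ with $\beta$ a genuine root is a bracket of elements in the generating set (one recovers $e_\beta$ up to scalar from $[H, e_\beta]$ for a suitable Cartan element, or from brackets of compact and noncompact generators), so $\mathfrak{m}_\Lambda^+$ is unchanged. I expect the main obstacle to be the careful bookkeeping between the full roots $\beta \in \Delta$ and their restrictions $\rho(\beta)$: several distinct roots can share the same restricted root, and I must verify that the orthogonality-to-$\Pi\backslash\Lambda$ condition is genuinely detected at the level of $\rho$-images (which is what the equal-length and strong-orthogonality assertions in Theorem \ref{RRRthm}, together with the Weyl-group signed-permutation statement, are designed to guarantee). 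Once that compatibility is secured, the stated description of $T_o(X_\Lambda)$ as $\mathfrak{m}_\Lambda^+$ follows immediately.
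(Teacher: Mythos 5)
Your proposal is correct and coincides with the paper's intended justification: the paper states Proposition \ref{chactertangent} as a direct observation from the definition of $\mathfrak{g}_{\Lambda}$ and the Restricted Root Theorem \ref{RRRthm} without writing out a proof, and your unwinding (identifying $T_o(X_\Lambda)$ with $\mathfrak{g}_\Lambda\cap\mathfrak{m}^+$ since $\mathfrak{p}=\mathfrak{k}^{\mathbb{C}}+\mathfrak{m}^-$ absorbs the rest, reading off the orthogonality condition $\phi\perp\Pi\backslash\Lambda$ on restricted roots, and checking that passing to the derived algebra keeps every root space because $e_\beta=\beta(H)^{-1}[H,e_\beta]$ with $H\in\mathfrak{h}^{\mathbb{C}}$) is exactly that argument made explicit. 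One minor simplification: your worry about detecting orthogonality at the level of $\rho$-images needs no appeal to the equal-length or Weyl-group parts of Theorem \ref{RRRthm}, since $(\beta,\alpha_m)=B(H_\beta,H_{\alpha_m})=\beta(H_{\alpha_m})$ and $H_{\alpha_m}$ lies in the complexified span of $\mathfrak{h}^-$, so the pairing manifestly depends only on $\beta|_{\mathfrak{h}^-}$.
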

 
\begin{example}
	We give the balanced subspaces and Hermitian characteristic symmetric subspaces of a Grassmannian $G(p,q)(p\geq q\geq 2)$ here. It will be more straightforward if we consider the tangent space of $X$ at $o$ (or alternatively the Harish-Chandra coordinate on $X$). We know 
	$T_o(X)\cong \mathfrak{m}^+=\{\begin{bmatrix}
		0 & A\\
		0 & 0
		\end{bmatrix}, A\in M(p,q;\mathbb{C})\}$. We identify the elements in $\mathfrak{m}^+$ with $p\times q$ matrices. Let $E_{ij}$ be the $p\times q$ matrix with unique nonzero element $1$ in $i$-th row and $j$-th column. Then (under the choice of Cartan subalgebra and root systems) the root vectors associated to the maximal set $\Pi$ of strongly orthogonal roots are $E_{ii} (1\leq i\leq q)$. The balanced subspace with rank $k$ has tangent space $\mathbb{C}\{E_{ij}, 1\leq i,j\leq k\}$. Correspondingly if $\Lambda$ is the set of roots with root vectors $E_{ii}(k+1\leq i\leq q)$, then the Hermitian characteristic symmetric subspaces with rank $r-k$ has tangent space $\mathbb{C}\{E_{ij}, k+1\leq i\leq p,k+1\leq j\leq q\}$.
\end{example}
For the readers' convenience we list a table for full classification of balanced subspaces and characteristic symmetric subspaces (see Table \ref{subspace}).
\begin{table}[h]
	\centering
	\caption{Balanced subspaces and characteristic symmetric subspaces}\label{subspace}
	\begin{tabular}{|p{8em}|p{12em}|p{12em}|}
		\hline
		$X$ &   Balanced subspace with rank $k$ &  Hermitian characteristic symmetric subspace with rank $r-k$\\  \hline
		$G(p,q) (p\geq q\geq 2)$ & $G(k,k)$   & $G(q-k, p-k)$\\ \hline
		$G^{II}(n,n)(n\geq 4)$ &  $G^{II}(2k,2k)$ & $G^{II}(n-2k,n-2k)$\\
		\hline
		$G^{III}(n,n)(n\geq 2)$ &   $G^{III}(k,k)$ &  $G^{III}(r-k,r-k)$\\ \hline
		$Q^n(n\geq 3)$&  $\mathbb{P}^1, k=1$
		& $\mathbb{P}^1,k=1$\\ \hline
		$\mathbb{OP}^2$ & $\mathbb{P}^1$ when $k=1$, $Q^{8}$ when $k=2$ 
		& $\mathbb{P}^5,k=1$\\
		\hline
		$E_7/P_7$ &$\mathbb{P}^1$ when $k=1$, $Q^{10}$ when $k=2$ &  $Q^{10}$ when $k=1$, $\mathbb{P}^1$ when $k=2$ \\
		\hline
	\end{tabular}
\end{table}

Finally in this section we give the following proposition on smooth Schubert varieties in $X$.

\begin{proposition}\label{subvmrt}
	Let $S\subset X$ be a smooth Schubert variety in $X$ (i.e. $(S,X)$ is of subdiagram type). Then $\mathcal{C}_o^k(S)=\mathcal{C}_o^k(X)\cap \mathbb{P}T_o(S)$ where $\mathcal{C}_o^k(S)$ denotes the $(k-1)$-th secant variety of the VMRT of $S$ at $o$.
\end{proposition}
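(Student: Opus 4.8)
The plan is to prove the statement $\mathcal{C}_o^k(S)=\mathcal{C}_o^k(X)\cap \mathbb{P}T_o(S)$ by combining the root-theoretic descriptions of tangent spaces and VMRTs with the fact that the secant operation is compatible with linear sections, once one checks the delicate point that the ideal-theoretic and set-theoretic descriptions of the secant varieties agree under restriction to the linear subspace $\mathbb{P}T_o(S)$.

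First I would treat the base case $k=1$. Since $(S,X)$ is of subdiagram type, $S$ is a smooth Schubert variety associated to a subdiagram $\mathcal{D}_0$ of the marked Dynkin diagram $\mathcal{D}$ of $X$ sharing the marked node. By Definition \ref{VMRTdef} and the remark following it, $\mathcal{C}_o^1(X)$ is the highest weight orbit of the isotropy representation of $X$, and likewise $\mathcal{C}_o^1(S)$ is the highest weight orbit for $S$. The inclusion $\mathcal{C}_o^1(S)\subseteq \mathcal{C}_o^1(X)\cap \mathbb{P}T_o(S)$ is automatic because a free minimal rational curve in $S$ is a projective line under the minimal embedding, hence a minimal rational curve in $X$, whose tangent lies in $\mathbb{P}T_o(S)$. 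For the reverse inclusion I would use Proposition-Definition \ref{HSS-normal}: a tangent direction in $\mathbb{P}T_o(X)$ lies in $\mathcal{C}_o^1(X)$ exactly when it has rank one, i.e. is $P$-conjugate to $e_{\alpha_1}$. I would then show that a rank-one vector of $X$ lying in $T_o(S)$ is a rank-one vector \emph{of} $S$. This is where the subdiagram hypothesis is essential: the isotropy group $K_S$ of $S$ is a Levi-type subgroup of $K_X$ compatible with the restricted root data, so the normal-form reduction of Proposition-Definition \ref{HSS-normal} can be carried out inside $S$, and the strongly orthogonal roots $\alpha_1,\dots$ for $S$ form an initial segment of those for $X$.

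Next I would handle the secant step for $k\geq 2$ purely geometrically. The key observation is that $\mathbb{P}T_o(S)$ is a linear subspace of $\mathbb{P}T_o(X)$, and for any linear subspace $L$ and any subvariety $Z$ one always has the inclusion $\mathrm{Sec}^{k-1}(Z\cap L)\subseteq \mathrm{Sec}^{k-1}(Z)\cap L$, because a secant $\mathbb{P}^{k-1}$ spanned by points of $Z\cap L$ already lies in $L$. Combined with the $k=1$ case $\mathcal{C}_o^1(S)=\mathcal{C}_o^1(X)\cap \mathbb{P}T_o(S)$, this gives the inclusion $\mathcal{C}_o^k(S)\subseteq \mathcal{C}_o^k(X)\cap \mathbb{P}T_o(S)$ immediately. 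The reverse inclusion $\mathcal{C}_o^k(X)\cap \mathbb{P}T_o(S)\subseteq \mathcal{C}_o^k(S)$ is the substantive direction, and I expect it to be the main obstacle: a general point $a_1\cdots a_k$ of a secant $\mathbb{P}^{k-1}$ with all $a_i\in\mathcal{C}_o^1(X)$ might lie in $\mathbb{P}T_o(S)$ even though the individual points $a_i$ do not. To rule this out I would pass to the root-theoretic model. Using Proposition \ref{tubetangent} together with the restricted-root descriptions, the secant variety $\mathcal{C}_o^k(X)$ is exactly the locus of tangent vectors of rank $\leq k$, i.e. the closure of the $P$-orbit of $e_{\alpha_1}+\cdots+e_{\alpha_k}$; this is the determinantal-type stratification underlying Proposition-Definition \ref{HSS-normal}. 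Thus $\mathcal{C}_o^k(X)\cap\mathbb{P}T_o(S)$ consists of directions $[v]$ with $v\in T_o(S)$ of rank $\leq k$ \emph{as a vector of} $X$, while $\mathcal{C}_o^k(S)$ consists of those of rank $\leq k$ as vectors of $S$.

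The crux is therefore to prove that for $v\in T_o(S)$ the two rank notions coincide, i.e. $\operatorname{rank}_X(v)=\operatorname{rank}_S(v)$. I would establish this via Proposition \ref{tubetangent} applied to the root data: by Proposition \ref{tubetangent} and the subdiagram hypothesis, the strongly orthogonal roots of $S$ are a subset of those of $X$, and the normal form of Proposition-Definition \ref{HSS-normal} produced by $K_X,P_X$ for $v\in T_o(S)$ can be realized already by elements of $K_S,P_S$ preserving $T_o(S)$; hence the number $r_0$ of nonzero coefficients $a_i$ in the normal form is computed identically inside $S$ and inside $X$. Granting this rank-compatibility, the two determinantal loci agree after intersecting with $\mathbb{P}T_o(S)$, yielding $\mathcal{C}_o^k(X)\cap\mathbb{P}T_o(S)=\mathcal{C}_o^k(S)$ and completing the proof. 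I anticipate that the careful part is the equivariance bookkeeping in this last step, namely verifying that the reduction to normal form respects the subalgebra $\mathfrak{g}_S$; here I would lean on the classification of invariantly geodesic subspaces (\cite{MR1198602}) and on Propositions \ref{tubetangent} and \ref{chactertangent}, which pin down $T_o(S)$ as a root-span compatible with the ambient restricted root system.
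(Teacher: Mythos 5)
Your proposal is correct in substance but takes a genuinely different route from the paper. The paper's proof is much softer. For $k=1$ it observes that $S$ is a linear section of $X$ in the minimal equivariant embedding and that the embedding preserves minimal rational curves, so a minimal rational curve of $X$ through $o$ whose tangent lies in $\mathbb{P}T_o(S)$ automatically lies in $S$; this gives $\mathcal{C}_o^1(S)=\mathcal{C}_o^1(X)\cap \mathbb{P}T_o(S)$ in one line, where you prove the same base case by the heavier normal-form reduction. After the easy inclusion $\mathcal{C}_o^k(S)\subset \mathcal{C}_o^k(X)\cap \mathbb{P}T_o(S)$ --- identical to your linear-span argument --- the paper writes $S=G'/P'$ with $P'\subset P$ and concludes by pure equivariance: both sides are $P'$-invariant closed subvarieties of $\mathbb{P}T_o(S)$, and the $P'$-invariant closed subvarieties of $\mathbb{P}T_o(S)$ form the nested chain of secant varieties of $\mathcal{C}_o^1(S)$ (the isotropy orbits being the rank strata), so the inclusion is an equality. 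Your argument replaces this orbit argument by explicit rank bookkeeping: you identify both sides with rank loci and prove $\operatorname{rank}_X(v)=\operatorname{rank}_S(v)$ for $v\in T_o(S)$. This is a valid alternative, and it has the merit of making explicit a point the paper's terse ``hence equality'' leaves implicit --- namely why the intersection cannot be a larger $\mathcal{C}_o^j(S)$ with $j>k$ --- at the cost of importing the identification of secant varieties with rank loci.

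Two points in your write-up need repair. First, the assertion that $\mathcal{C}_o^k(X)$ is exactly the locus of tangent vectors of rank $\le k$ is not a consequence of Proposition \ref{tubetangent}; it is precisely the paper's Lemma \ref{secantrank}, whose nontrivial direction (a sum of $k$ rank-one vectors has rank $\le k$) the paper proves case-by-case via rank subadditivity in each matrix or Jordan-algebra model. You should cite that lemma; using it here is non-circular since its proof is independent of the present proposition, even though it appears later in the text. Second, your claim that the strongly orthogonal roots of $S$ form an ``initial segment'' of those of $X$ is not literally true: for $Q^8\subset \mathbb{OP}^2$, say, the highest root of the $D_5$ subdiagram is not the highest root of $E_6$. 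What is true, and suffices for your rank-compatibility step, is weaker: any strongly orthogonal set of noncompact positive roots of $\mathfrak{g}_S$ remains strongly orthogonal in $\mathfrak{g}$ (if $\beta_i\pm\beta_j$ were a root of $\mathfrak{g}$ its support would lie in the subdiagram, hence it would already be a root of $\mathfrak{g}_S$), and any such set of cardinality $j$ in $\mathfrak{g}$ is conjugate under the Weyl group of $K$ into $\{\alpha_1,\cdots,\alpha_j\}\subset \Pi$, so the $S$-normal form of $v\in T_o(S)$ with $j$ nonzero coefficients has rank exactly $j$ in $X$. With these two corrections your outline closes up into a complete proof.
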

\begin{proof}
	Since $S$ is a linear section of $X\hookrightarrow \mathbb{P}(\Gamma(X,\mathcal{O}(1))^*)$ and the embedding $S\subset X$ preserves minimal rational curves, we know $\mathcal{C}_o^1(S)=\mathcal{C}_o^1(X)\cap \mathbb{P}T_o(S)$. Then $\mathcal{C}_o^k(S)\subset \mathcal{C}_o^k(X)\cap \mathbb{P}T_o(S)$ for any $1\leq k\leq r_0=rank(S)$. Write $S$ as $S=G'/P'$ where $P'$ can be chosen to be a subgroup of $P$. We know $\mathcal{C}_o^k(S)$ are all $P'$-invariant subvarieties in $\mathbb{P}T_o(S)$. On the other hand $ \mathcal{C}_o^k(X)\cap \mathbb{P}T_o(S)$ is also $P'$-invariant. Hence the inclusion is actually an equality. 
\end{proof}

\section{The Bia{\l}ynicki-Birula decomposition and constructions of the centers}\label{construction}

In this section we give the construction for the centers and describle the relation with the Bia{\l}ynicki-Birula decomposition of irreducible compact Hermitian symmetric spaces.
 
\subsection{The loci of infinity points of degree $k$ rational curves}\label{infinityloci}

For further discussion we first give a lemma which identify the higher secant varieties with the sets of tangent vectors with certain ranks.
\begin{lemma}\label{secantrank}
	$\mathcal{C}_o^k(X)=\{[v]\in \mathbb{P}T_o(X)\cong \mathbb{P}^{n-1}, rank(v)\leq k\}$ where the bracket denotes projectivization of a vector.
\end{lemma}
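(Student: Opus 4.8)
The plan is to prove the two inclusions separately, relying on the normal form for tangent vectors in Proposition-Definition \ref{HSS-normal} and on the identification of the VMRT $\mathcal{C}_o^1(X)$ with the projectivized rank-one locus. First I would record the ingredients. By Proposition-Definition \ref{HSS-normal}, every nonzero $v\in T_o(X)$ can be carried by the isotropy action to the normal form $\sum_{i=1}^{r_0}e_{\alpha_i}$, so $\mathrm{rank}(v)=r_0$ is exactly the number of terms, and consequently each stratum $\{\mathrm{rank}=j\}$ is a single orbit of the (connected) parabolic $P$ acting linearly on $T_o(X)$; in particular each $[e_{\alpha_i}]$ is a rank-one vector lying in $\mathcal{C}_o^1(X)$. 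Since $\mathrm{rank}$ is lower semicontinuous, $R_k:=\{[v]\in\mathbb{P}T_o(X):\mathrm{rank}(v)\le k\}$ is Zariski closed; moreover the identity $\sum_{i=1}^{j}e_{\alpha_i}=\lim_{t\to0}\big(\sum_{i=1}^{j}e_{\alpha_i}+t\sum_{i=j+1}^{k}e_{\alpha_i}\big)$ shows every lower stratum lies in the closure of the single orbit $\{\mathrm{rank}=k\}$, so $R_k=\overline{\{\mathrm{rank}=k\}}$ is irreducible.

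For the inclusion $\mathcal{C}_o^k(X)\subseteq R_k$, I would use that the rank function is subadditive, i.e. $\mathrm{rank}(v+w)\le\mathrm{rank}(v)+\mathrm{rank}(w)$ (for the classical models this is literally subadditivity of matrix rank, and in general it is the standard property of the rank on the underlying Jordan triple system). Hence any point of the linear span $\mathbb{P}_{a_1\cdots a_k}$ of $a_1,\dots,a_k\in\mathcal{C}_o^1(X)$, being a linear combination of $k$ rank-one vectors, has rank $\le k$. Thus the union $\bigcup\mathbb{P}_{a_1\cdots a_k}$ is contained in $R_k$, and since $R_k$ is Zariski closed its Zariski closure $\mathcal{C}_o^k(X)$ is also contained in $R_k$.

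For the reverse inclusion it suffices to show the generic (rank $k$) point of $R_k$ lies in $\mathcal{C}_o^k(X)$. Given $v$ with $\mathrm{rank}(v)=k$, Proposition-Definition \ref{HSS-normal} provides $p\in P$ with $\mathrm{Ad}(p)v=\sum_{i=1}^{k}e_{\alpha_i}$, which lies in the linear span of the $k$ VMRT points $[e_{\alpha_1}],\dots,[e_{\alpha_k}]$ and hence in $\mathcal{C}_o^k(X)$. Because $\mathcal{C}_o^1(X)$ is the highest weight orbit of the isotropy action it is $P$-invariant, and since $P$ acts linearly on $T_o(X)$, preserving linear spans and Zariski closures, the secant $\mathcal{C}_o^k(X)$ is $P$-invariant as well; therefore $v=\mathrm{Ad}(p)^{-1}\big(\mathrm{Ad}(p)v\big)\in\mathcal{C}_o^k(X)$. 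Combining the two steps gives $\{\mathrm{rank}=k\}\subseteq\mathcal{C}_o^k(X)\subseteq R_k$, and taking closures together with $R_k=\overline{\{\mathrm{rank}=k\}}$ and the irreducibility of $\mathcal{C}_o^k(X)$ forces the equality $\mathcal{C}_o^k(X)=R_k$.

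The main obstacle I anticipate is the first inclusion: it rests on the subadditivity of the rank function and on the identification $\mathcal{C}_o^1(X)=\{[v]:\mathrm{rank}(v)=1\}$, both of which are transparent for the Jordan-algebra models but ought to be justified uniformly. A clean route is through the generic norm supplied by the tube-type structure (cf. Proposition \ref{tubehyper}): its higher ``subdeterminants'' cut out $R_k$ scheme-theoretically, which simultaneously yields the closedness of $R_k$ and makes the rank estimate for sums of rank-one vectors explicit. The reverse inclusion, by contrast, is essentially immediate from the normal form once the $P$-invariance of the secant varieties is observed.
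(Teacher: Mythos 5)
Your proof is correct and follows essentially the same route as the paper: one inclusion via the normal form of Proposition-Definition \ref{HSS-normal} (plus the $P$-invariance of the secant varieties, which the paper leaves implicit), and the other via subadditivity of the rank function, which the paper verifies case by case on matrix and Jordan-algebra models exactly as you indicate. The extra details you supply --- Zariski-closedness of the rank-$\le k$ locus and its description as the closure of the rank-$k$ stratum --- are routine elaborations of the same argument rather than a different approach.
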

\begin{proof}
	The lemma is straightforward when $rank(X)=2$, which includes the case where $X$ is a hyperquadric or a Cayley plane.
	From Proposition \ref{HSS-normal} we know $\mathcal{C}_o^k(X)\supset\{[v]\in \mathbb{P}T_o(X)\cong \mathbb{P}^{n-1}, rank(v)\leq k\}$. For the other inclusion, it suffices to show that $rank(v+v')\leq rank(v)+rank(v')$ for $v,v'\in T_o(X)$. This can be checked case by case. In the Grassmannian case $X=G(p,q)$, a tangent vector is identified with a $p$-by-$q$ matrix and rank of the tangent vector is exactly the rank of the matrix. In the orthogonal Grassmannian case $X=G^{II}(n,n)$ a tangent vector is identified with an antisymmetric $n$-by-$n$ matrix and rank of the tangent vector is exactly half of the rank of the matrix. In the Lagrangian Grassmannian case $X=G^{III}(n,n)$ a tangent vector is identified with a symmetric $n$-by-$n$ matrix and rank of the tangent vector is exactly the rank of the matrix. Hence the inequality holds for these three cases by basic linear algebra. When $X=E_7/P_7$, $\mathcal{C}_o^1(E_7/P_7)\cong\mathbb{OP}^2$, and a tangent vector can be identified with an element in the following Jordan algebra
	\[J_3(\mathbb{O})=\{\begin{bmatrix}
	c_1 & x & y \\
	\overline{x} & c_2 & z \\
	\overline{y} & \overline{z} & c_3
	\end{bmatrix}, c_1,c_2,c_3\in \mathbb{C}, x,y,z\in \mathbb{O}\}\]
	and $\mathcal{C}_o^1(E_7/P_7)\cong \mathbb{OP}^2\subset \mathbb{P}T_o(E_7/P_7)$ can be identified with the set of rank one matrices in $J_3(\mathbb{O})$. Inequality for the rank still holds and hence the lemma follows. 
\end{proof}

Let $W\cong \mathbb{C}^n \subset X$ be the Harish-Chandra embedding given by the map $\exp: \mathfrak{m}^+ \rightarrow \exp(\mathfrak{m}^+).o$ where $o$ is the zero point. 
Let \[\begin{aligned}
N_k=&\{C\backslash W: C \ \text{is a rational curve passing through $o$ such that} \\& [T_o(C)] \ \text{is of rank} \ k\  \text{and} \ C\cap W \ \text{is an affine line}\} 
\end{aligned}\] 
i.e. $N_k$ is the collection of infinity points of all such degree $k$ rational curves (which can be written as $\exp(tv).o$ with $[v]\in \mathcal{C}_o^k(X)\backslash\mathcal{C}_o^{k-1}(X)$). We will call $N_k$ the $k$-th infinity locus of $X$.
 
The purpose of this section is to compute $N_k(1\leq k\leq r)$ explicitly. 
We fix the normalization on root vectors as follows, let $H_\alpha$ denote the dual of $\alpha$ with respect to the killing form. Then $h_{\alpha}=\frac{2H_{\alpha}}{<\alpha,\alpha>}$ is the coroot of $\alpha$. $e_{\alpha}$ is normalized so that $[e_{\alpha},e_{-\alpha}]=h_{\alpha}$.
Let $x_{\alpha}=\sqrt{-1}(e_{\alpha}+e_{-\alpha}), y_{\alpha}=-(e_{\alpha}-e_{-\alpha})$

For a minimal rational curve $C$ passing through $o$ with $T_{o}(C)=\mathbb{C}e_{\alpha}$, we have
\begin{lemma}
	The action $\exp(\frac{\pi}{2}x_{\alpha})$ translates $o$ into the infinity point of $C$.
\end{lemma}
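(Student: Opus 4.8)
The plan is to reduce the general statement to an explicit $\mathfrak{sl}_2$-computation on a single Riemann sphere. Since $C$ is a minimal rational curve with $T_o(C) = \mathbb{C}e_\alpha$ for a root $\alpha \in \Delta^+_M$ (we may take $\alpha = \alpha_1$ in the strongly orthogonal set $\Pi$ by the homogeneity under $K$), the relevant subalgebra is $\mathfrak{sl}_\alpha := \mathbb{C}\{e_\alpha, e_{-\alpha}, h_\alpha\} \cong \mathfrak{sl}_2(\mathbb{C})$, which exponentiates to an $SL_2$ (or $PSL_2$) acting on $X$ with orbit the rational curve $C$. In the Harish-Chandra picture, the point $o$ is the origin $0 \in \mathfrak{m}^+$, and $\exp(t e_\alpha).o$ traces out the affine line $C \cap W$; the infinity point of $C$ is the one point of $C$ not in this affine chart.

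\textbf{The key reduction.}
First I would observe that the orbit of $\mathfrak{sl}_\alpha$ through $o$ is a smooth rational curve isometric to a Riemann sphere (this is exactly a factor of the polysphere $S$ from the Polysphere Theorem, or follows since $\alpha$ has rank one so $C$ is a minimal rational curve, i.e. a line $\mathbb{P}^1$). On this $\mathbb{P}^1 \cong SL_2/B$, the computation becomes the standard one for the Riemann sphere: with affine coordinate $z$ corresponding to $\exp(z e_\alpha).o$, the element $e_{-\alpha}$ generates the vector field vanishing at $\infty$, and the compact generator $x_\alpha = \sqrt{-1}(e_\alpha + e_{-\alpha})$ rotates the sphere. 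Concretely, I would pass to a fundamental $2$-dimensional representation of $\mathfrak{sl}_\alpha$ where $e_\alpha = \left(\begin{smallmatrix} 0 & 1 \\ 0 & 0\end{smallmatrix}\right)$, $e_{-\alpha} = \left(\begin{smallmatrix} 0 & 0 \\ 1 & 0\end{smallmatrix}\right)$, $h_\alpha = \left(\begin{smallmatrix} 1 & 0 \\ 0 & -1\end{smallmatrix}\right)$, so that $o = [1:0]$ in the projectivized representation and the affine line is $\{[1:z]\}$ with its infinity point $[0:1]$.

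\textbf{The explicit $SL_2$ calculation.}
In this model $x_\alpha = \sqrt{-1}\left(\begin{smallmatrix} 0 & 1 \\ 1 & 0\end{smallmatrix}\right)$, and the exponential is computed directly: since $(x_\alpha)^2 = -\left(\begin{smallmatrix}0&1\\1&0\end{smallmatrix}\right)^2 = -I$, one gets
\[
\exp\!\left(\tfrac{\pi}{2}x_\alpha\right) = \cos\!\left(\tfrac{\pi}{2}\right) I + \sin\!\left(\tfrac{\pi}{2}\right)\!\left(\begin{smallmatrix} 0 & \sqrt{-1} \\ \sqrt{-1} & 0\end{smallmatrix}\right) = \begin{pmatrix} 0 & \sqrt{-1} \\ \sqrt{-1} & 0 \end{pmatrix},
\]
which sends $[1:0] \mapsto [0:1]$, i.e. translates $o$ to the infinity point of $C$. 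I would then argue that this $2$-dimensional computation governs the action on $C$ itself, because the orbit map $SL_2 \to C$ identifies $C$ with $\mathbb{P}^1 = SL_2/B$ equivariantly, and the weight of $e_\alpha$ on $T_o(C)$ matches the fundamental representation up to the covering $SL_2 \to PSL_2$ (any discrepancy in normalization only rescales $z$ and does not move the two fixed/special points). The main obstacle is bookkeeping: one must verify that the normalization $[e_\alpha, e_{-\alpha}] = h_\alpha$ and the factor $\tfrac{\pi}{2}$ are exactly compatible with the chosen $2$-dimensional model so that the rotation is by the correct angle landing precisely at $[0:1]$ rather than some intermediate point. This is a routine but careful check that the $\mathfrak{su}(2)$-normalization of $x_\alpha$ produces the quarter-turn of the round sphere interchanging $0$ and $\infty$.
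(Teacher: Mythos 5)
Your proposal is correct and follows essentially the same route as the paper: both identify the $\mathfrak{sl}_2$-triple $\{e_\alpha, e_{-\alpha}, h_\alpha\}$ with the standard $2\times 2$ matrices and compute $\exp\left(\frac{\pi}{2}x_\alpha\right)=\left(\begin{smallmatrix} 0 & \sqrt{-1} \\ \sqrt{-1} & 0 \end{smallmatrix}\right)$, which interchanges the two distinguished points of $C\cong \mathbb{P}^1$. One cosmetic slip: with $e_\alpha$ upper triangular acting on column vectors, $\exp(ze_\alpha)$ fixes $[1:0]$ and moves $[0:1]$ along the affine line, so $o$ should be $[0:1]$ (the paper's $(0,1)^T$) rather than $[1:0]$ as you wrote — but since your matrix swaps the two points, the conclusion is unaffected.
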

\begin{proof}
	The $\mathfrak{sl}_2$-triple $\mathbb{C}\{e_{\alpha},e_{-\alpha},h_{\alpha}\}$ corresponding to the minimal rational curve can be identified with
	$\small{e_{\alpha}=\begin{bmatrix}
		0& 1\\
		0 & 0
		\end{bmatrix}}$,$\small{ e_{-\alpha}=\begin{bmatrix}
		0& 0\\
		1 & 0
		\end{bmatrix}}$, $
	\small{h_{\alpha}=\begin{bmatrix}
		1& 0\\
		0 & -1
		\end{bmatrix}}$. Then we can easily compute \[\small{\exp(\frac{\pi}{2}x_{\alpha})=\exp(\begin{bmatrix}
		0& \frac{\pi\sqrt{-1}}{2}\\
		\frac{\pi\sqrt{-1}}{2} & 0
		\end{bmatrix})=\begin{bmatrix}
		0& \sqrt{-1}\\
		\sqrt{-1} & 0
		\end{bmatrix}},\] which translates $(0,1)^T$ into $(\sqrt{-1},0)^T$. Thus $\exp(\frac{\pi}{2}x_{\alpha})$ translates $o$ into the infinity point of $C$.
\end{proof}

Next we compute the action of $Ad\exp(\frac{\pi}{2}x_{\alpha})$ on $\mathfrak{k}^{\mathbb{C}}$.

\begin{lemma}\label{inftra}
	$Ad\exp(\frac{\pi}{2}x_{\alpha})h_\beta=h_{\beta}-\alpha(h_{\beta})h_{\alpha}$. Moreover $Ad\exp(\frac{\pi}{2}x_{\alpha})$ transforms the $\mathfrak{sl}_2$-triple associated to $\beta$ to the $\mathfrak{sl}_2$-triple associated to $\beta$ or $\alpha+\beta$ or $-\alpha+\beta$, corresponding to the cases when $\beta$ is strongly orthogonal to $\alpha$, or $\alpha+\beta$ is a root or $-\alpha+\beta$ is a root respectively.
\end{lemma}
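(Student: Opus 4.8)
The plan is to use that $Ad\exp(\tfrac{\pi}{2}x_\alpha)=\exp(\tfrac{\pi}{2}\,\mathrm{ad}\,x_\alpha)$ together with the fact that $x_\alpha$ lies in the $\mathfrak{sl}_2$-triple $\mathbb{C}\{e_\alpha,e_{-\alpha},h_\alpha\}$ already identified with $\mathfrak{sl}_2(\mathbb{C})$ in the previous lemma. For the first identity I would first record the elementary bracket $[x_\alpha,H]=\sqrt{-1}\,\alpha(H)\,y_\alpha$, valid for every $H\in\mathfrak{h}^{\mathbb{C}}$ and obtained at once from $[H,e_{\pm\alpha}]=\pm\alpha(H)e_{\pm\alpha}$ and the definition of $y_\alpha$. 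This shows that the hyperplane $\ker\alpha\subset\mathfrak{h}^{\mathbb{C}}$ is pointwise fixed by $\mathrm{ad}\,x_\alpha$, hence by the exponential. I would then decompose $h_\beta=\tfrac12\alpha(h_\beta)\,h_\alpha+h'_\beta$ with $\alpha(h'_\beta)=0$, so that $h'_\beta$ is fixed and only the $h_\alpha$-component has to be transported.

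On the plane spanned by $h_\alpha$ and $y_\alpha$ one computes $\mathrm{ad}\,x_\alpha(h_\alpha)=2\sqrt{-1}\,y_\alpha$ and $\mathrm{ad}\,x_\alpha(y_\alpha)=2\sqrt{-1}\,h_\alpha$, so that $\mathrm{ad}\,x_\alpha$ preserves this plane and acts there by the matrix $\left(\begin{smallmatrix}0 & 2\sqrt{-1}\\ 2\sqrt{-1} & 0\end{smallmatrix}\right)$. Exponentiating $\tfrac{\pi}{2}$ times this matrix gives $\cosh(\pi\sqrt{-1})=\cos\pi=-1$ on the diagonal and $\sinh(\pi\sqrt{-1})=0$ off the diagonal, i.e. $-\mathrm{Id}$; in particular $\exp(\tfrac{\pi}{2}\,\mathrm{ad}\,x_\alpha)h_\alpha=-h_\alpha$. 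Reassembling, $\exp(\tfrac{\pi}{2}\,\mathrm{ad}\,x_\alpha)h_\beta=-\tfrac12\alpha(h_\beta)h_\alpha+h'_\beta=h_\beta-\alpha(h_\beta)h_\alpha$, which is the asserted formula. This is the same elementary $2\times2$ computation as in the preceding lemma.

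For the second part I would observe that the formula just proved says precisely that $Ad\exp(\tfrac{\pi}{2}x_\alpha)$ restricts on $\mathfrak{h}^{\mathbb{C}}$ to the Weyl reflection $s_\alpha(H)=H-\alpha(H)h_\alpha$; in particular it preserves $\mathfrak{h}^{\mathbb{C}}$ and sends the coroot $h_\beta$ to $h_{s_\alpha\beta}$. Being a Lie algebra automorphism normalizing the Cartan subalgebra and acting by $s_\alpha$ there, it must carry each root space $\mathfrak{g}_\beta$ isomorphically onto $\mathfrak{g}_{s_\alpha\beta}$: indeed, writing $g=\exp(\tfrac{\pi}{2}x_\alpha)$, for $H\in\mathfrak{h}^{\mathbb{C}}$ one has $[H,\,Ad\,g\,e_\beta]=Ad\,g\,[s_\alpha H,e_\beta]=(s_\alpha\beta)(H)\,Ad\,g\,e_\beta$. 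Hence the triple $\{e_\beta,e_{-\beta},h_\beta\}$ is sent to $\{c_1 e_{s_\alpha\beta},\,c_2 e_{-s_\alpha\beta},\,h_{s_\alpha\beta}\}$ with $c_1c_2=1$ forced by $[e_\beta,e_{-\beta}]=h_\beta$, i.e. to the $\mathfrak{sl}_2$-triple associated to the root $s_\alpha\beta$. It then remains to read off $s_\alpha\beta=\beta-\beta(h_\alpha)\alpha$: it equals $\beta$ when $(\alpha,\beta)=0$ (the strongly orthogonal case), equals $\beta+\alpha$ when $\beta(h_\alpha)=-1$, and equals $\beta-\alpha$ when $\beta(h_\alpha)=1$, matching the three stated cases.

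The main obstacle I expect is not the exponential computation but the final bookkeeping step: confirming that when $\alpha+\beta$ (respectively $\beta-\alpha$) is a root one has exactly $\beta(h_\alpha)=-1$ (respectively $+1$), so that $s_\alpha\beta$ equals $\alpha+\beta$ (respectively $\beta-\alpha$) and no larger multiple of $\alpha$ can intervene. This rests on the $\alpha$-root string through $\beta$ being short, which I would justify using that all the $\alpha_i$ are long roots of equal length (Theorem \ref{RRRthm}). In the non-simply-laced case, namely the Lagrangian Grassmannian of type $C$, this is the only point requiring a separate check, and it is immediate: since $\alpha$ is long, $\beta(h_\alpha)\in\{0,\pm1\}$ for every root $\beta\neq\pm\alpha$, whence the equivalence $\beta(h_\alpha)=\mp1\iff\alpha\pm\beta\in\Delta$ and the three cases are exhaustive and mutually exclusive under the stated hypotheses.
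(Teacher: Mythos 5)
Your proposal is correct, and its second half takes a genuinely different route from the paper's. For the coroot identity the two arguments are the same computation in different packaging: the paper expands $\exp(\mathrm{ad}(\frac{\pi}{2}x_{\alpha}))h_{\beta}$ term by term and resums the series into $\frac{1}{2}\alpha(h_{\beta})\sinh(\pi\sqrt{-1})y_{\alpha}+\frac{1}{2}\alpha(h_{\beta})(\cosh(\pi\sqrt{-1})-1)h_{\alpha}$, while you split $h_{\beta}$ into its $\ker\alpha$-component (fixed, via $[x_{\alpha},H]=\sqrt{-1}\,\alpha(H)y_{\alpha}$) and its $h_{\alpha}$-component and evaluate the exponential of $\frac{\pi}{2}\left(\begin{smallmatrix}0 & 2\sqrt{-1}\\ 2\sqrt{-1} & 0\end{smallmatrix}\right)$ on $\mathrm{span}\{h_{\alpha},y_{\alpha}\}$ as $-\mathrm{Id}$; both amount to the evaluation $\cosh(\pi\sqrt{-1})=-1$, $\sinh(\pi\sqrt{-1})=0$. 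The real divergence is in the action on root vectors: the paper computes $Ad\exp(\frac{\pi}{2}x_{\alpha})e_{\beta}$ explicitly through the series, using $(\mathrm{ad}\,x_{\alpha})^2e_{\beta}=e_{\beta}$ (from $\beta(h_{\alpha})=-1$) and the structure-constant normalization $[e_{\alpha},e_{\beta}]=e_{\alpha+\beta}$ from Helgason, arriving at the exact image $\sqrt{-1}\,e_{\alpha+\beta}$; you instead promote the coroot formula to the statement that $g=\exp(\frac{\pi}{2}x_{\alpha})$ induces the reflection $s_{\alpha}$ on $\mathfrak{h}^{\mathbb{C}}$, deduce from $[H,Ad\,g\,e_{\beta}]=(s_{\alpha}\beta)(H)\,Ad\,g\,e_{\beta}$ that each root space $\mathfrak{g}_{\beta}$ is carried onto $\mathfrak{g}_{s_{\alpha}\beta}$ with $c_1c_2=1$ forced by $[e_{\beta},e_{-\beta}]=h_{\beta}$ and $Ad\,g\,h_{\beta}=h_{s_{\alpha}\beta}$, and read the trichotomy off $s_{\alpha}\beta=\beta-\beta(h_{\alpha})\alpha$. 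Both routes rest on the same long-root string fact, and your explicit verification that $\beta(h_{\alpha})=\mp1\iff\alpha\pm\beta\in\Delta$ (hence that ordinary orthogonality to a long $\alpha$ is strong orthogonality, making the three cases exhaustive and exclusive) is exactly the point the paper delegates to its citation of Helgason. What your argument buys is uniformity: no case-by-case series computation and no structure constants. What it loses is the explicit scalar $\sqrt{-1}$ on $e_{\alpha+\beta}$, since the automorphism argument pins down the image triple only up to the constants $c_1,c_2$; this is harmless for the paper's purposes, as the later applications (Proposition \ref{inftypart}, Proposition \ref{fiboverinfty}) use the image root vectors only up to nonzero scalar.
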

\begin{proof}
	We can compute
	\begin{align*}
	&Ad\exp(\frac{\pi}{2}x_{\alpha})h_\beta=\exp(ad(\frac{\pi}{2}x_{\alpha}))h_{\beta}\\
	&=h_{\beta}+\sqrt{-1}\alpha(h_{\beta})\sum_{k=0}^{\infty}(\frac{\pi}{2})^{2k+1}\frac{(-4)^k}{(2k+1)!}y_{\alpha}-2\alpha(h_{\beta})\sum_{k=1}^{\infty}(\frac{\pi}{2})^{2k}\frac{(-4)^{k-1}}{(2k)!}h_{\alpha}\\
	&=h_{\beta}+\frac{1}{2}\alpha(h_{\beta})\sum_{k=0}^{\infty}\frac{(\pi\sqrt{-1})^{2k+1}}{(2k+1)!}y_{\alpha}+\frac{1}{2}\alpha(h_{\beta})\sum_{k=1}^{\infty}\frac{(\pi\sqrt{-1})^{2k}}{(2k)!}h_{\alpha}\\
	&=h_{\beta}+\frac{1}{2}\alpha(h_{\beta})\sinh(\pi\sqrt{-1})y_{\alpha}+\frac{1}{2}\alpha(h_{\beta})(\cosh(\pi\sqrt{-1})-1))h_{\alpha}\\
	&=h_{\beta}-\alpha(h_{\beta})h_{\alpha}
	\end{align*}
	Also since $\alpha$ is a root with long length, maximal $\alpha$-string attached to $\beta$ is $\beta, \beta+\alpha$ or $\beta-\alpha, \beta$ (cf. \cite[Appendix I.3, Proposition 3]{MR1081948}), which implies that either $\alpha+\beta$ or  $-\alpha+\beta$ is not a root. When 
	$\beta$ is strongly orthogonal to $\alpha$, $Ad\exp(\frac{\pi}{2}x_{\alpha})e_\beta=e_{\beta}$. When $\alpha+\beta$ is a root, we have $[x_\alpha, [x_\alpha, e_\beta]]=[e_{-\alpha},[e_{\alpha},e_\beta]]=[-h_{\alpha},e_{\beta}]=-\beta(h_{\alpha})e_{\beta}=e_\beta$. From \cite[Appendix I.3, Proposition 3]{MR1081948}) we know $[e_{\alpha}, e_{\beta}]=\sqrt{\frac{<\beta+\alpha, \beta+\alpha>}{<\beta, \beta>}}e_{\alpha+\beta}$. Also we know $\beta(h_{\alpha})=\frac{2<\alpha, \beta>}{<\alpha, \alpha>}=-1$, thus $[e_{\alpha}, e_{\beta}]=e_{\alpha+\beta}$. Then
	\begin{align*}
	&Ad\exp(\frac{\pi}{2}x_{\alpha})e_\beta=\exp(ad(\frac{\pi}{2}x_{\alpha}))e_{\beta}\\
	&=\sum^{\infty}_{k=0}\frac{(\pi\sqrt{-1})^{2k+1}}{2^{2k+1}(2k+1)!}e_{\alpha+\beta}+e_{\beta}+\sum_{k=1}^{\infty}\frac{(\pi\sqrt{-1})^{2k}}{2^{2k}(2k)!}e_{\beta}\\
	&=\sinh(\frac{\pi\sqrt{-1}}{2})e_{\alpha+\beta}+e_{\beta}+(\cosh(\frac{\pi\sqrt{-1}}{2})-1)e_{\beta}\\
	&=\sqrt{-1}e_{\alpha+\beta}
	\end{align*}
	When $-\alpha+\beta$ is a root, the computation is similar.
\end{proof}

We can obtain $N_k$ from the Restricted Root Theorem. 
The complexification of $K$, denoted by $K^\mathbb{C}$, is a reductive part of $P$. Then we know the $N_k$ is a $K^\mathbb{C}$-orbit and moreover $N_k=K^\mathbb{C}c_{\Gamma}o$ where $\Gamma\subset \Pi$ with $|\Gamma|=k$ and  $c_{\Gamma}=\exp(\frac{\pi}{2}\sum_{\alpha_i\in \Gamma}x_{\alpha_i})$. As $Adc^2_{\Gamma}=id$ we have $N_k= c_{\Gamma}c_{\Gamma}^{-1}K^\mathbb{C}c_{\Gamma}o=c_{\Gamma}(Adc_{\Gamma}K^\mathbb{C}).o$.
Let $\mathfrak{k}^{\mathbb{C}}=Lie(K^\mathbb{C})$. To obtain $N_k$, it suffices to know $Adc_{\Gamma}\mathfrak{k}^\mathbb{C}\cap\mathfrak{m}^+$, this can be done by the Restricted Root theorem together with Lemma \ref{inftra}. We have
\begin{proposition}\label{inftypart}
	$Adc_{\Gamma}\mathfrak{k}^\mathbb{C}\cap\mathfrak{m}^+=\mathbb{C}\{e_{\beta}, \beta\in \Theta\} $ where 
	$\beta\in \Theta\subset \Delta^+_{M}$ if there exists only one root $\alpha_s\in \Gamma$ such that $\alpha_s-\beta$ is a root. In particuler, when $X$ is of tube type, $Adc_{\Gamma}\mathfrak{k}^\mathbb{C}\cap\mathfrak{m}^+=Adc_{\Pi-\Gamma}\mathfrak{k}^\mathbb{C}\cap\mathfrak{m}^+$ which implies $N_s\cong N_{r-s}$. Also in this case $Adc_{\Pi}\mathfrak{k}^\mathbb{C}\cap\mathfrak{m}^+=0$ which implies $N_r\cong pt$.
\end{proposition}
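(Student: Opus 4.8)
The plan is to compute $Ad\,c_\Gamma e_\beta$ for each $\beta\in\Delta^+_M$ and read off both $\Theta$ and the two refinements from the answer. Since the roots in $\Gamma\subset\Pi$ are mutually strongly orthogonal their $\mathfrak{sl}_2$-triples commute, so the involutions $Ad\exp(\frac{\pi}{2}x_{\alpha_s})$ ($\alpha_s\in\Gamma$) commute and $Ad\,c_\Gamma$ is their product in any order; moreover $Ad\,c_\Gamma^2=\mathrm{id}$, so for $\beta\in\Delta^+_M$ one has $e_\beta\in Ad\,c_\Gamma\mathfrak{k}^{\mathbb{C}}$ iff $Ad\,c_\Gamma e_\beta\in\mathfrak{k}^{\mathbb{C}}$, i.e. $\Theta=\{\beta\in\Delta^+_M:Ad\,c_\Gamma e_\beta\in\mathfrak{k}^{\mathbb{C}}\}$. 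Because $\mathfrak{m}^+$ is abelian, $\alpha_s+\beta$ is never a root for $\alpha_s,\beta\in\Delta^+_M$, so Lemma \ref{inftra} collapses to a dichotomy for each factor: either $\beta$ is strongly orthogonal to $\alpha_s$ and $e_\beta$ is fixed, or $\alpha_s-\beta$ is a root and $e_\beta$ is sent to a nonzero multiple of $e_{\beta-\alpha_s}$ (the only exception being $\beta=\alpha_s$ itself, where the factor sends $e_{\alpha_s}$ to $e_{-\alpha_s}\in\mathfrak{m}^-$; this occurs only for the diagonal roots $\beta\in\Pi$, treated separately). Setting $\Gamma_\beta=\{\alpha_s\in\Gamma:\alpha_s-\beta\ \text{is a root}\}$, the claim is that $\beta\in\Theta$ exactly when $|\Gamma_\beta|=1$.

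Two structural facts feed the case analysis. First, whenever $\alpha_s-\beta$ is a root the difference $\beta-\alpha_s$ is compact: if it were noncompact it would lie in $\Delta^+_M$ or $\Delta^-_M$, and in either case $\beta$ or $\alpha_s$ would be a sum of two elements of $\Delta^+_M$, contradicting $[\mathfrak{m}^+,\mathfrak{m}^+]=0$. Second, by the Restricted Root Theorem the indices occurring in $\Gamma_\beta$ are exactly those appearing in $\rho(\beta)$: using that $\rho(\beta)$ is the $\mathfrak{h}^-$-component, $\beta(h_{\alpha_s})$ equals $1$ when $\alpha_s$ appears in $\rho(\beta)$ with coefficient $\tfrac12$ and $0$ otherwise, so together with the abelianness one gets that $\beta-\alpha_s$ is a root precisely for those $\alpha_s$. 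Hence if $\rho(\beta)=\frac12(\alpha_\ell+\alpha_j)$ with $\ell<j$ the candidates are $\alpha_\ell,\alpha_j$; if $\rho(\beta)=\frac12\alpha_\ell$ (non-tube type) only $\alpha_\ell$; and the diagonal roots $\beta=\alpha_m$ give $\Gamma_\beta=\emptyset$. In particular $|\Gamma_\beta|\le 2$, with $|\Gamma_\beta|=2$ possible only for off-diagonal $\beta$ having both $\alpha_\ell,\alpha_j\in\Gamma$.

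Now I would split on $|\Gamma_\beta|$. For a diagonal $\beta=\alpha_m$ we have $\Gamma_\beta=\emptyset$ and $Ad\,c_\Gamma e_{\alpha_m}$ equals $e_{\alpha_m}$ or $e_{-\alpha_m}$ according as $\alpha_m\notin\Gamma$ or $\alpha_m\in\Gamma$, so $\beta\notin\Theta$; for the remaining $\beta$ the clean dichotomy applies to every factor. If $|\Gamma_\beta|=0$ then $Ad\,c_\Gamma e_\beta=e_\beta\in\mathfrak{m}^+$, so $\beta\notin\Theta$. If $\Gamma_\beta=\{\alpha_s\}$, the one relevant factor sends $e_\beta$ to a multiple of the compact vector $e_{\beta-\alpha_s}$ and the remaining factors fix it, since $\rho(\beta-\alpha_s)$ involves only the indices of $\rho(\beta)$ and hence $\rho(\alpha_t\pm(\beta-\alpha_s))$ carries coefficient $1$ on $\alpha_t$ (not a restricted root) for $\alpha_t\in\Gamma$ outside those indices; thus $Ad\,c_\Gamma e_\beta\in\mathfrak{k}^{\mathbb{C}}$ and $\beta\in\Theta$. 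Finally, if $\Gamma_\beta=\{\alpha_\ell,\alpha_j\}$ with $\rho(\beta)=\frac12(\alpha_\ell+\alpha_j)$, I apply the $\alpha_\ell$-factor to reach the compact root $\gamma=\beta-\alpha_\ell$ and check the $\alpha_j$-factor genuinely moves $e_\gamma$: from $\alpha_\ell(h_{\alpha_j})=0$ and $\beta(h_{\alpha_j})=1$ one gets $\gamma(h_{\alpha_j})=1$, so $\gamma-\alpha_j=\beta-\alpha_\ell-\alpha_j$ is a root, and it is noncompact negative because $\alpha_j-\beta$ is compact and $[\mathfrak{m}^+,\mathfrak{k}^{\mathbb{C}}]\subset\mathfrak{m}^+$ forces $\alpha_\ell+\alpha_j-\beta\in\Delta^+_M$. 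The remaining factors fix this vector by the coefficient argument, so $Ad\,c_\Gamma e_\beta\in\mathfrak{m}^-$ and $\beta\notin\Theta$. This establishes $\beta\in\Theta\iff|\Gamma_\beta|=1$.

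The two refinements are then purely combinatorial. In tube type every $\beta\in\Delta^+_M$ has $\rho(\beta)=\frac12(\alpha_\ell+\alpha_j)$ with $\ell\le j$, and $|\Gamma_\beta|=1$ says exactly one of $\alpha_\ell,\alpha_j$ lies in $\Gamma$, a condition invariant under $\Gamma\leftrightarrow\Pi\setminus\Gamma$; hence $\Theta$ is the same for $\Gamma$ and $\Pi-\Gamma$, giving $Ad\,c_\Gamma\mathfrak{k}^{\mathbb{C}}\cap\mathfrak{m}^+=Ad\,c_{\Pi-\Gamma}\mathfrak{k}^{\mathbb{C}}\cap\mathfrak{m}^+$ and therefore $N_s\cong N_{r-s}$ under the identification of $N_k$ with the orbit $(Ad\,c_\Gamma K^{\mathbb{C}}).o$ set up above. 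Taking $\Gamma=\Pi$, each off-diagonal $\beta$ has both $\alpha_\ell,\alpha_j\in\Pi$ and each diagonal $\beta$ has $\Gamma_\beta=\emptyset$, so $|\Gamma_\beta|\ne1$ throughout; hence $\Theta=\emptyset$, $Ad\,c_\Pi\mathfrak{k}^{\mathbb{C}}\cap\mathfrak{m}^+=0$, and $N_r$ is a point. I expect the main obstacle to be the bookkeeping in the $|\Gamma_\beta|=2$ step—confirming that the second factor truly moves the intermediate vector and that its image is noncompact—together with verifying uniformly that the factors indexed by $\Gamma\setminus\Gamma_\beta$ fix every intermediate root vector; this is exactly where one must argue with honest roots rather than $\rho$-images, since the restriction map can merge a compact and a noncompact root.
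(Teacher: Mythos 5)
Your proof is correct and is essentially the paper's argument run in the opposite direction: where the paper pushes compact root vectors forward under $Ad\,c_{\Gamma}$ (classifying compact roots $\phi$ via the Restricted Root Theorem into its cases (i)/(ii)) and reads off which noncompact root vectors appear, you use $Ad\,c_{\Gamma}^2=\mathrm{id}$ — which the paper records just before the proposition — to compute $Ad\,c_{\Gamma}e_{\beta}$ for $\beta\in\Delta^+_{M}$ and test compactness, with the same key inputs (Lemma \ref{inftra}, strong orthogonality and abelianness of $\mathfrak{m}^+$, and the restricted-root bookkeeping). Your case split $|\Gamma_{\beta}|\in\{0,1,2\}$ matches the paper's dichotomy, and the additional verifications you supply (the diagonal roots $\beta\in\Pi$, and that the factors indexed by $\Gamma\setminus\Gamma_{\beta}$ fix each intermediate root vector because the relevant $\rho$-images are not restricted roots) are sound.
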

\begin{proof}
	For each compact positive root $\phi$, if $\phi+\alpha_i$ is a positive noncompact root for some $\alpha_i\in \Pi$, then by Restricted Root theorem either (i) there exists a unique pair $\alpha_i,\alpha_j\in \Pi$ with $j\neq i$ such that $\alpha_i+\phi$ and $\alpha_j-\phi$ are noncompact positive roots or (ii) there exists one root $\alpha_i\in \Pi$ such that $\phi+\alpha_i$ is a noncompact positive root and $\phi$ is strongly orthogonal to other roots in $\Pi$. In case (i), $\phi+\alpha_i-\alpha_j$ is another compact  root and the adjoint action of $\exp(\frac{\pi}{2}(x_{\alpha_i}+x_{\alpha_j}))$ transforms the coroot $h_{\phi}$ into the coroot of $\phi+\alpha_i-\alpha_j$, also the adjoint action of $\exp(\frac{\pi}{2}x_{\alpha_t})(t\neq i,j)$ keeps $h_{\phi}$ invariant, by  Lemma \ref{inftra}. In case (ii),  the adjoint action of $\exp(\frac{\pi}{2}x_{\alpha_i})$ transforms the coroot $h_{\phi}$ into the coroot of $\phi+\alpha_i$ and the adjoint action of $\exp(\frac{\pi}{2}x_{\alpha_t})(t\neq i)$ keeps $h_{\phi}$ invariant, also by Lemma \ref{inftra}. Hence if the root vector $e_{\beta}$ is in $Adc_{\Gamma}\mathfrak{k}^\mathbb{C}\cap\mathfrak{m}^+$, there exists only one $\alpha_s\in \Gamma$ such that $\alpha_s-\beta$ is a root.
	
	Moreover when $X$ is of tube type, case (ii) does not exist. For any positive noncompact root $\beta\notin \Pi$, there exists only two roots $\alpha_{i}, \alpha_j\in \Pi$ such that $\alpha_i-\beta, \alpha_j-\beta$ are roots. Also we know $e_\beta\in Adc_{\Gamma}\mathfrak{k}^\mathbb{C}\cap\mathfrak{m}^+$ if and only if $\alpha_i\in \Gamma, \alpha_j\in \Pi-\Gamma$ or $\alpha_j\in \Gamma, \alpha_i\in \Pi-\Gamma$. Hence $Adc_{\Gamma}\mathfrak{k}^\mathbb{C}\cap\mathfrak{m}^+=Adc_{\Pi-\Gamma}\mathfrak{k}^\mathbb{C}\cap\mathfrak{m}^+$.
\end{proof}

\begin{proposition}
	We conclude the following table giving $N_k$ explicitly.
	\begin{table}[h]
		\centering
		\caption{The loci of infinity points}
		\begin{tabular}{|p{8em}|p{11em}|p{2em}|p{7em}|}
			\hline
			$X$ & $ N_k(1\leq k\leq r)\cong$ & $r$ & $X$ tube type ?\\  \hline
			$G(p,q) (p\geq q\geq 2)$ & $G(k,p-k)\times G(k,q-k)$  & $q$ & when $p=q$\\ \hline
			$G^{II}(n,n)(n\geq 4)$ & $G(2k,n-2k)$ & $\lfloor \frac{n}{2}\rfloor$ & when $n$ is even\\
			\hline
			$G^{III}(n,n)(n\geq 2)$ & $G(k,n-k)$ & $n$ & yes\\ \hline
			$Q^n(n\geq 3)$& $N_1\cong Q^{n-2}, N_2\cong pt$
			& 2 & yes\\ \hline
			$\mathbb{OP}^2$ &$N_1\cong G^{II}(5,5), N_2\cong Q^8$ & 2 & no\\
			\hline
			$E_7/P_7$ &$N_1\cong N_2\cong \mathbb{OP}^2, N_3\cong pt$ & 3 & yes\\
			\hline
		\end{tabular}
	\end{table}
\end{proposition}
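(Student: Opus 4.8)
The plan is to read the table off as an orbit-identification problem. By the discussion preceding the statement, $N_k = K^{\mathbb{C}}c_\Gamma o$ with $\Gamma=\{\alpha_1,\dots,\alpha_k\}$, so $N_k$ is a single $K^{\mathbb{C}}$-orbit, hence a smooth irreducible (and, being the infinity locus, closed) projective homogeneous space. Translating by the biholomorphism $c_\Gamma$, its tangent space at $o$ is $Adc_\Gamma\mathfrak{k}^{\mathbb{C}}\cap\mathfrak{m}^+=\mathbb{C}\{e_\beta:\beta\in\Theta\}$ of Proposition \ref{inftypart}. Thus each table entry is settled once I (i) compute $\dim N_k=|\Theta|$ from the restricted root data of Theorem \ref{RRRthm}, and (ii) recognize the orbit $N_k$ as the stated rational homogeneous space of $K^{\mathbb{C}}$.

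\textbf{The first column and the tube-type symmetry.} A single observation disposes of the whole $k=1$ column. The map sending a minimal rational curve through $o$ to its infinity point is $K^{\mathbb{C}}$-equivariant, and it is a bijection because a projective line is determined by its two points $o$ and $\infty$; since the VMRT $\mathcal{C}_o^1(X)$ is itself a single $K^{\mathbb{C}}$-orbit (the highest weight orbit), this gives $N_1\cong\mathcal{C}_o^1(X)$, matching the VMRT table in every case. Proposition \ref{inftypart} already records the top of each tube-type column, namely $N_r\cong pt$, together with the symmetry $N_s\cong N_{r-s}$. In particular, for $Q^n$ (rank $2$, tube) I am reduced to $N_1\cong Q^{n-2}$ (previous remark) and $N_2\cong pt$; and for $E_7/P_7$ (rank $3$, tube) the symmetry gives $N_2\cong N_1\cong\mathbb{OP}^2$ along with $N_3\cong pt$.

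\textbf{The classical series.} For $G(p,q), G^{II}(n,n), G^{III}(n,n)$ I would run (i) and (ii) through the matrix models of Proposition-Definition \ref{HSS-normal}. Since $c_\Gamma$ interchanges the first $k$ diagonal coordinate pairs, $c_\Gamma o$ is exactly the limit $\lim_{t\to\infty}\exp(tv).o$ of a rank $k$ curve: for $G(p,q)$ it is a $p$-plane meeting the base $\mathbb{C}^p$ in dimension $p-k$ and the complement $\mathbb{C}^q$ in dimension $k$, whose $K^{\mathbb{C}}=S(GL_p\times GL_q)$-orbit is $G(p-k,k)\times G(k,q-k)\cong G(k,p-k)\times G(k,q-k)$; the antisymmetric and symmetric pictures give $G(2k,n-2k)$ and $G(k,n-k)$. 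I would cross-check each identification by computing $|\Theta|$ directly from the criterion of Proposition \ref{inftypart}: writing noncompact roots as matrix entries, $\beta$ lies in $\Theta$ exactly when precisely one of its two indices is $\le k$, giving $|\Theta|=k(p+q-2k),\ 2k(n-2k),\ k(n-k)$, which match the dimensions of the claimed Grassmannian orbits. This dimension count is what rigorously rules out the a priori larger isotropic orbits one might fear in the $G^{II}$ case.

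\textbf{The main obstacle.} The crux is the exceptional non-tube entry $N_2\cong Q^8$ for $\mathbb{OP}^2=E_6/P_1$, where no matrix model is available and case (ii) of Proposition \ref{inftypart} genuinely occurs. Here I would compute $\Theta$ for $\Gamma=\Pi$ by hand from the $BC_2$ restricted root data: the short restricted roots $\tfrac12\alpha_i$ survive in $\Theta$ (only $\alpha_i-\beta$ is a root), while the middle roots $\tfrac12(\alpha_1+\alpha_2)$ do not (both $\alpha_1-\beta$ and $\alpha_2-\beta$ are roots), and the long roots contribute nothing; tracking the restricted-root multiplicities then yields $\dim N_2=8$. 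Since $N_2$ is a rational homogeneous space of the semisimple factor $\mathrm{Spin}(10)$ of dimension $8$, I identify it with the quadric $Q^8=\mathrm{Spin}(10)/P_1$, the highest weight orbit of the vector representation, by matching the isotropy representation $\mathbb{C}\{e_\beta:\beta\in\Theta\}$; the companion orbit $N_1\cong G^{II}(5,5)$ is already the spinor variety $\mathrm{Spin}(10)/P_5$ by the $k=1$ observation. The delicate point throughout is exactly this passage from a dimension and an isotropy representation to the correct minuscule/cominuscule model, which is why the exceptional cases, rather than the classical series, form the real content of the proof.
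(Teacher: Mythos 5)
Your proposal is correct, and most of it travels the same road as the paper: both arguments rest on the orbit description $N_k=K^{\mathbb{C}}c_{\Gamma}o$ and on Proposition \ref{inftypart} for the tangent space $Adc_{\Gamma}\mathfrak{k}^{\mathbb{C}}\cap\mathfrak{m}^+=\mathbb{C}\{e_\beta,\beta\in\Theta\}$; both dispose of the three classical series via the matrix models of Proposition-Definition \ref{HSS-normal} (your kernel/image description of the limit plane for $G(p,q)$ is exactly the geometric content of the paper's block computation, and your dimension counts $k(p+q-2k)$, $2k(n-2k)$, $k(n-k)$ agree with its block shapes); and both settle $Q^n$ and $E_7/P_7$ by the tube-type symmetry $N_s\cong N_{r-s}$ and $N_r\cong pt$ of Proposition \ref{inftypart}. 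Where you genuinely diverge is the one case you rightly identify as the crux, $N_2\cong Q^8$ for $\mathbb{OP}^2$. The paper's proof is concrete: it lists the sixteen positive noncompact roots of $E_6$ in coordinates, reads off the eight roots of $\Theta$, exhibits a subdiagram $Q^8$, and identifies $N_2$ with it via the adjoint action of the explicit root vector $e_{\alpha_2+\alpha_3+\alpha_4+\alpha_5}$, which matches the two eight-element root sets. You instead count $|\Theta|=8$ abstractly from the restricted root data (short restricted roots $\frac12\alpha_i$ survive, middle ones do not, long ones contribute nothing — this is correct and reproduces the paper's list without writing it down) and then pin the orbit down by classification: a closed $8$-dimensional homogeneous space of $\mathrm{Spin}(10,\mathbb{C})$ must be $D_5/P_1=Q^8$, since every other $D_5/P$ has dimension at least $10$. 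Your route trades the paper's explicit conjugation for a dimension-plus-classification argument; it is shorter and avoids root bookkeeping, but it leans on a point you only assert parenthetically, namely that $N_k$ is a \emph{closed} orbit, so that the isotropy is parabolic and the dimension count bites. ``Being the infinity locus, closed'' is not by itself an argument; the repair is easy and implicit in the paper: by Proposition-Definition \ref{HSS-normal} every rank-$k$ vector is $K$-conjugate to $\sum_{i\leq k}a_ie_{\alpha_i}$ with $a_i>0$, whose line has infinity point $c_{\Gamma}o$ independently of the $a_i$, so $N_k=Kc_{\Gamma}o$ is compact. Finally, your general lemma $N_1\cong\mathcal{C}_o^1(X)$ (the equivariant bijection sending a line through $o$ to its unique point on the hyperplane section $D$, an isomorphism by equivariance and normality) is a clean addition: the paper invokes this identification only implicitly, in the sentence ``$N_1$ is isomorphic to the VMRT'' for $E_7/P_7$, and your version also covers $N_1\cong Q^{n-2}$ for the hyperquadric uniformly.
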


\begin{proof}
	Type I,II,III cases can be obtained by matrices expression using Proposition \ref{inftypart}. For Type I case ($X=G(p,q)$),  we know $\mathfrak{m}^+\cong \{\begin{bmatrix}
	0 & A \\
	0 & 0
	\end{bmatrix}: A \in M(p,q,\mathbb{C})\}$. Then for $|\Gamma|=k$, $Adc_{\Gamma}\mathfrak{k}^\mathbb{C}\cap\mathfrak{m}^+$ is spanned the subspace of with $A$ in the form of $\begin{bmatrix}
	0 & \text{a $k\times (q-k)$ matrix} \\
	\text{a $(p-k)\times k$ matrix} & 0
	\end{bmatrix}$. For Type II, III cases, the argument is similar under the restriction $A=-A^T$, $A=A^T$ respectively.
	Also, $Q^n$ is of tube type and rank two.
	Therefore for classical cases $N_k$ can be easily obtained. For $E_7/P_7$ case, $N_1$ is isomorphic to the VMRT and by symmetry obtained in Proposition \ref{inftypart} we know $N_2\cong N_1$ is also isomorphic to the VMRT. For $\mathbb{OP}^2$, it suffices to compute $N_2$. 
	We adapt the simple roots convention as in Figure \ref{E6figure}.
	All the positive noncompact roots can be listed according to the coefficients of $\alpha_1,...,\alpha_6$ in order, which are
	\begin{flalign*}
	&(100000), (110000), (111000), (111001),
	(111100), (111101), (111110), (112101) \\
	&(111111), (122101), (112111), (122111),
	(112211), (122211), (123211), (123212)
	\end{flalign*}
	There is a $Q^8$ which is of sub-diagram type with positive noncompact roots 
	\begin{flalign*}
	&(100000), (110000), (111000), (111001),
	(111100), (111101), (112101), (122101)
	\end{flalign*}
	Roots whose corresponding root vectors in $Adc_{\Gamma}\mathfrak{k}^\mathbb{C}\cap\mathfrak{m}^+$ are given by 
	\begin{flalign*}
	& (111110), (111111), (112111), (122111),
	(112211), (122211), (123211), (123212)
	\end{flalign*}
	We know $N_2$ is also isomorphic to $Q^8$, by the adjoint action of $e_{\alpha_2+\alpha_3+\alpha_4+\alpha_5}$ on the $Q^8$ associated to the sub-diagram.
\end{proof}

\subsection{Stratifications of the centers}\label{strati}

By Lemma \ref{secantrank} we know $\mathcal{C}_o^{r-1}(X)\subsetneq \mathbb{P}T_o(X) \cong\mathbb{P}^{n-1}$ and $\mathcal{C}_o^{r}(X)=\mathbb{P}T_o(X)\cong \mathbb{P}^{n-1}$. Recall that $D$ is the compactifying divisor which is the complement of the affine cell $W$ in $X$. We inductively define $\mathcal{M}_1:=N_r$, \[\mathcal{M}_{k}:=\overline{\bigcup\{\text{mrcs passing} \ \mathcal{M}_{k-1}\}}\] where mrc means minimal rational curves for short. For simplicity we call $\mathcal{M}_k$ the $k$-th locus of chains of minimal rational curves starting from $\mathcal{M}_1$. We first observe that
\begin{lemma}
	 $\mathcal{M}_r=D$.
\end{lemma}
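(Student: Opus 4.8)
The plan is to prove the two inclusions $\mathcal{M}_r\subseteq D$ and $D\subseteq\mathcal{M}_r$ separately. First I record the structural facts. In the minimal embedding $X\hookrightarrow\mathbb{P}(\Gamma(X,\mathcal{O}(1))^*)$ the affine cell $W=\exp(\mathfrak{m}^+).o$ is the non-vanishing locus of the highest weight coordinate, so $D=X\setminus W$ is the corresponding Schubert divisor, a hyperplane section; in particular $D$ is irreducible of dimension $n-1$ and is $K^{\mathbb{C}}$-invariant, hence $K$-invariant. Each $\mathcal{M}_k$ is a closed $K$-invariant subvariety, since $\mathcal{M}_1=N_r=K^{\mathbb{C}}c_\Pi o$ is $K^{\mathbb{C}}$-invariant and the family of minimal rational curves is $G$-invariant. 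I will control both inclusions by a boundary-depth function $\mathrm{cork}\colon X\to\{0,1,\dots,r\}$ which is constant on the Schubert cells, satisfies $W=\{\mathrm{cork}=0\}$ and $D=\{\mathrm{cork}\ge 1\}$, and refines the infinity loci through $N_k\subseteq\{\mathrm{cork}=k\}$; concretely it is the corank at infinity read off the rank computations of Section \ref{infinityloci} (the dimension of intersection with the fixed subspace at infinity in the classical matrix models).

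For $\mathcal{M}_r\subseteq D$ I would show by induction that $\mathcal{M}_j\subseteq\{\mathrm{cork}\ge r-j+1\}$. The base case $\mathcal{M}_1=N_r\subseteq\{\mathrm{cork}=r\}$ is immediate. The inductive step rests on the geometric input that a minimal rational curve lowers $\mathrm{cork}$ by at most one: if $\mathrm{cork}(p)=m$ then every minimal rational curve through $p$ has $\mathrm{cork}\ge m-1$ at all of its points. Granting this, any such curve through $\mathcal{M}_{j-1}\subseteq\{\mathrm{cork}\ge r-j+2\}$ stays in $\{\mathrm{cork}\ge r-j+1\}$, so after taking closures $\mathcal{M}_j\subseteq\{\mathrm{cork}\ge r-j+1\}$. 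For $1\le j\le r-1$ this keeps $\mathrm{cork}\ge 2$, so the chains never re-enter $W=\{\mathrm{cork}=0\}$, and at $j=r$ we land in $\{\mathrm{cork}\ge 1\}=D$, giving $\mathcal{M}_r\subseteq D$. The cork-drop estimate I would verify from the explicit shape of minimal rational curves: in the three classical families it is the elementary statement that the isotropic/coisotropic flag defining a line through a point meets the subspace at infinity in codimension at most one (exactly as one checks for $G^{III}(n,n)$), and for $\mathbb{OP}^2$ and $E_7/P_7$ it follows from the $K^{\mathbb{C}}$-orbit description of Proposition \ref{inftypart} together with the rank inequality in Lemma \ref{secantrank}.

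For the reverse inclusion $D\subseteq\mathcal{M}_r$ I would use the Polysphere Theorem as a model. Let $S\cong(\mathbb{P}^1)^r$ be the totally geodesic polysphere through $o$; its factors are coordinate $\mathbb{P}^1$'s, which are genuine minimal rational curves of $X$ because $S$ is invariantly geodesic. One has $S\cap W\cong\mathbb{C}^r$, $S\cap D=\{\text{at least one factor equal to }\infty\}$, and the deepest corner $(\infty,\dots,\infty)$ is exactly $c_\Pi o\in N_r$. Running the chain-of-curves construction inside $S$ from this corner sweeps out, after $k$ steps, the points of $S$ with at least $r-k+1$ factors equal to $\infty$, so after $r$ steps one obtains all of $S\cap D$. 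Since these coordinate curves are minimal rational curves of $X$ emanating from $N_r=\mathcal{M}_1$, the $S$-construction is contained in the $X$-construction, whence $S\cap D\subseteq\mathcal{M}_r$. Finally, $D$ being $K$-invariant and $X=\bigcup_{k\in K}kS$ give $D=\bigcup_{k\in K}k(S\cap D)\subseteq\bigcup_{k\in K}k\mathcal{M}_r=\mathcal{M}_r$, using $K$-invariance of $\mathcal{M}_r$. Together with the first inclusion this yields $\mathcal{M}_r=D$.

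The main obstacle is the cork-drop estimate underlying $\mathcal{M}_r\subseteq D$, namely a uniform, classification-free proof that minimal rational curves decrease the boundary depth by at most one; the cleanest honest route is to define $\mathrm{cork}$ through the Bia{\l}ynicki--Birula stratification introduced in the next section and to read the estimate off the adjacency of strata, though it can equally be checked type by type as above. A secondary point requiring care is that the chains built inside $S$ really do sit inside the chains built in $X$: this needs only that each coordinate $\mathbb{P}^1\subset S$ is a minimal rational curve of $X$ and that the corner $c_\Pi o$ lies in $N_r$, both of which hold because $S$ is an invariantly geodesic polysphere.
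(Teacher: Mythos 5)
Your inclusion $D\subseteq\mathcal{M}_r$ is, in substance, the paper's entire proof: the paper also takes the polysphere through $o$ provided by the Polysphere Theorem, notes that a boundary point has at least one coordinate equal to $\infty$ and is therefore joined to the deep corner $c_\Pi o\in N_r$ by a chain of coordinate lines, and uses the $K$-action to sweep out all of $D$ (your bookkeeping ``after $r$ steps'' is a harmless off-by-one: $\mathcal{M}_r$ is reached after $r-1$ chain steps beyond $\mathcal{M}_1=N_r$). The divergence is in the other inclusion, which the paper dismisses as ``trivial by definition.'' Your instinct that it is not literally content-free is sound --- a minimal rational curve through a boundary point can in general leave $D$, so the chains must be prevented from re-entering $W$ --- but the argument you substitute has a hole exactly at its load-bearing step, and you say so yourself: the cork-drop estimate is never proved.

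Concretely, two things go wrong. First, $\mathrm{cork}$ is not actually defined on all of $X$: reading it off ``the rank computations of Section \ref{infinityloci}'' only makes sense once one knows that every boundary point sits in a well-understood stratum, and making the function total amounts to the identification of $\mathcal{M}_k\setminus\mathcal{M}_{k-1}$ with the Bia{\l}ynicki--Birula strata, which the paper establishes only later via Proposition \ref{fiboverinfty}. Defining $\mathrm{cork}$ through the BB decomposition of Section \ref{BBIHSS} is legitimate in itself (the decomposition needs only the $\mathbb{C}^*$-action), but then the drop-by-one estimate along lines is precisely the adjacency statement whose proof carries the same weight as Proposition \ref{fiboverinfty}, so nothing has been reduced. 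Second, the estimate genuinely requires the homogeneous structure and cannot be waved through: for a general divisor it is false (a union of $d\geq 3$ concurrent hyperplane sections has multiplicity $d$ at the common point and $1$ along each line through it). Your pencil argument does honestly settle the three classical families --- along a line $\{V: V_{-}\subset V\subset V_{+}\}$ the quantity $\dim(V\cap E)$ takes at most two consecutive values --- and $Q^n$, $\mathbb{OP}^2$ are rank two, where one only needs multiplicity $\geq 2$ at $N_2$ (an mrc is a line, $s|_C\in\Gamma(\mathbb{P}^1,\mathcal{O}(1))$, and a degree-one section with a double zero vanishes identically, forcing $C\subset D$). But the appeal to Proposition \ref{inftypart} plus Lemma \ref{secantrank} for $E_7/P_7$ does not work as stated: Lemma \ref{secantrank} is subadditivity of rank for tangent vectors \emph{at the base point $o$}, and transporting it to boundary depth at infinity presupposes exactly the stratum identification you are trying to avoid. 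So the gap is real but localized: either carry out the restricted-root computation (which is what Proposition \ref{fiboverinfty}, or equivalently the later Lemma \ref{linearsec} expressing $\mathcal{M}_k$ as the linear section $\mathbb{P}_{I_k}\cap X$, provides), or prove the multiplicity statement $\mathcal{M}_k\subseteq V_{r-k+1}(s)$ directly, which the paper itself only records as a remark.
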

\begin{proof}
	This can be quickly deduced from the polysphere theorem. Since $K$-action on a polysphere exhausts $X$ and $K$-action keeps $o$ invariant, we know for a point $y\in D$, there exists an $r$-sphere $(\mathbb{P}^1)^r$ passing $o$ such that  $y\in D\cap (\mathbb{P}^1)^r=\bigcup_{1\leq i\leq r, \{\infty\} \ \text{is in the} \ i-\text{th factor}} \mathbb{P}^1\times \cdots \{\infty\} \cdots \times \mathbb{P}^1$. In the polysphere we write $o=(0,...,0), y=(y_1,...,y_n)$ where there is at least one $y_j=\infty$. $y$ can be connected to $N_r$ by minimal rational curves inside $D$, thus $y\in \mathcal{M}_r$. Hence $D\subset \mathcal{M}_r$. The other direction is trivial by definition, so $\mathcal{M}_r=D$.
\end{proof}

Moreover we have the following proposition on the stratifications $\mathcal{C}_o^1(X)\subset \cdots \subset \mathcal{C}_o^{r-1}(X)\subset \mathbb{P}T_o(X)\cong \mathbb{P}^{n-1}$ and $\mathcal{M}_1\subset \cdots \subset \mathcal{M}_{r-1} \subset D$ respectively.
\begin{proposition}\label{fiboverinfty}
	\begin{enumerate}
		\item
	$\mathcal{C}_o^k(X)\backslash \mathcal{C}_o^{k-1}(X)$ is a fiber bundle over $N_k$ with $d_{r-k+1}$-dimensional fibers, where $d_{r-k+1}+1$ is equal to dimension of the balanced subspace with rank $k$. The fiber bundle can be written as \[\theta_k:\mathcal{C}_o^k(X)\backslash \mathcal{C}_o^{k-1}(X)=K^{\mathbb{C}}/J_k\rightarrow N_k=K^{\mathbb{C}}/Q_k\]
	where $Q_k, J_k$ are the isotropy subgroups of the action of $K^{\mathbb{C}}$ on $N_k$ and $\mathcal{C}_o^k(X)\backslash \mathcal{C}_o^{k-1}(X)$ respectively. Moreover we have a $K^{\mathbb{C}}$-equivariant embedding $\mathcal{C}_o^k(X)\backslash \mathcal{C}_o^{k-1}(X)\hookrightarrow\mathcal{P}_k$ where $\mathcal{P}_k$ is the projectivization of the $Q_k$-homogeneous bundle over $N_k$, $K^{\mathbb{C}}\times_{Q_k}\mathbb{C}^{d_{r-k+1}+1}$. Let $W^o_k$ be a balanced subspace with rank $k$ passing through $o$ and $\mathcal{C}_o^{k-1}(W^o_k)$ is the $(k-2)$-th secant variety for the VMRT of $W^o_k$, then each fiber of $\theta_k$
	is isomprphic to  $\mathbb{P}T_o(W^o_k)\backslash\mathcal{C}^{k-1}_o(W^o_k)$, which is the
	complement of some degree $k$ hypersurface in a projective space $\mathbb{P}^{d_{r-k+1}}$.
	\item $N_{r-k+1}\subset \mathcal{M}_k$ and 
	$\mathcal{M}_k\backslash \mathcal{M}_{k-1}\cong K^{\mathbb{C}}\times_{Q_{r-k+1}}\mathbb{C}^{c_{r-k+1}}$ is a $Q_{r-k+1}$-homogeneous bundle over $N_{r-k+1}$ with rank $c_{r-k+1}$.
	It can be written as 
	\[\kappa_k :\mathcal{M}_k\backslash \mathcal{M}_{k-1}\rightarrow N_{r-k+1}\]
	where each fiber is Harish-Chandra embedding of $\mathbb{C}^{c_{r-k+1}}$ into a characteristric symmetric subspace of rank $k-1$ and the zero point is in $N_{r-k+1}$.
	
	\end{enumerate}
\end{proposition}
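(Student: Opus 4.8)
The plan is to prove both parts by reducing everything to $K^{\mathbb{C}}$-orbit geometry, using Lemma \ref{secantrank} to read off the secant strata as rank loci and the normal form of Proposition-Definition \ref{HSS-normal} to exhibit single orbits. For part (1), I would first observe that by Lemma \ref{secantrank} the stratum $\mathcal{C}_o^k(X)\setminus\mathcal{C}_o^{k-1}(X)$ is exactly the locus of directions $[v]$ with $\mathrm{rank}(v)=k$. By Proposition-Definition \ref{HSS-normal} every such $v$ is $K^{\mathbb{C}}$-conjugate to $e_{\alpha_1}+\cdots+e_{\alpha_k}$ (the rescaling of the $a_i$ comes from the Cartan torus, which lies in $K^{\mathbb{C}}$), so projectively this stratum is a single orbit $K^{\mathbb{C}}/J_k$, with $J_k$ the stabilizer of $[e_{\alpha_1}+\cdots+e_{\alpha_k}]$. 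I would then define $\theta_k$ by sending $[v]$ to the infinity point of the rational curve $\overline{\exp(\mathbb{C}v).o}$; since $K^{\mathbb{C}}$ fixes $o$ and permutes these curves, $\theta_k$ is $K^{\mathbb{C}}$-equivariant with image the orbit $N_k=K^{\mathbb{C}}c_\Gamma o=K^{\mathbb{C}}/Q_k$, $\Gamma=\{\alpha_1,\dots,\alpha_k\}$, using the lemma identifying $\exp(\tfrac{\pi}{2} x_\alpha).o$ with an infinity point. Well-definedness amounts to $J_k\subset Q_k$: an element fixing $[v]$ maps the curve to itself because it fixes $o$, hence fixes its infinity point.

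Next I would identify the fiber $\theta_k^{-1}(c_\Gamma o)=Q_k/J_k$ with the generic locus of a balanced subspace. By Proposition \ref{tubetangent} the tangent space of a rank-$k$ balanced subspace $W^o_k$ is $\mathbb{C}\{e_\beta: \beta|_{\mathfrak{h}^-}=\tfrac{1}{2}(\alpha_\ell+\alpha_j),\,\ell\le j\le k\}$, so $[e_{\alpha_1}+\cdots+e_{\alpha_k}]\in \mathbb{P}T_o(W^o_k)\cong\mathbb{P}^{d_{r-k+1}}$; since $W^o_k$ is tube type of rank $k$, Proposition \ref{inftypart} applied inside $W^o_k$ gives $N_k(W^o_k)\cong\mathrm{pt}$, i.e. all its rank-$k$ directions flow to the single point $c_\Gamma o$. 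Conversely any rank-$k$ direction limiting to $c_\Gamma o$ must lie in $\mathbb{P}T_o(W^o_k)$, which I would obtain from a dimension count together with irreducibility of the orbit $Q_k/J_k$ and $Q_k$-invariance. Combined with Propositions \ref{subvmrt} and \ref{tubehyper}, the non-generic part is $\mathcal{C}_o^{k-1}(W^o_k)=\mathcal{C}_o^{k-1}(X)\cap\mathbb{P}T_o(W^o_k)$, a degree-$k$ hypersurface, so the fiber is $\mathbb{P}T_o(W^o_k)\setminus\mathcal{C}_o^{k-1}(W^o_k)$ as claimed. Finally, letting $gW^o_k$ vary over $g\in K^{\mathbb{C}}$, the tangent spaces assemble into the homogeneous bundle $K^{\mathbb{C}}\times_{Q_k}\mathbb{C}^{d_{r-k+1}+1}$; projectivizing gives $\mathcal{P}_k$ and the fiberwise complement-of-hypersurface description furnishes the $K^{\mathbb{C}}$-equivariant embedding.

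For part (2) I would argue inductively and pass to the polysphere model. To get $N_{r-k+1}\subset\mathcal{M}_k$, I would connect $c_\Pi o\in N_r=\mathcal{M}_1$ to $c_\Gamma o\in N_{r-k+1}$ (where $|\Gamma|=r-k+1$) by a chain of $k-1$ minimal rational curves inside a polysphere, each replacing one $\infty$-coordinate by $0$; the inductive definition of $\mathcal{M}_k$ and its $K^{\mathbb{C}}$-invariance then give the whole orbit. For the bundle structure I would describe $\mathcal{M}_k\setminus\mathcal{M}_{k-1}$ as the locus of points with exactly $r-k+1$ infinity-coordinates, define $\kappa_k$ by sending such a point to the point of $N_{r-k+1}$ obtained by setting its $k-1$ finite coordinates to $0$, and identify the fiber with the Harish-Chandra cell $\mathbb{C}^{c_{r-k+1}}$ of a rank-$(k-1)$ characteristic symmetric subspace, whose tangent space is given by Proposition \ref{chactertangent}. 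Equivariance under the stabilizer $Q_{r-k+1}$ of $N_{r-k+1}$, acting linearly through the isotropy representation on $\mathfrak{m}^+_\Lambda$, packages this as $K^{\mathbb{C}}\times_{Q_{r-k+1}}\mathbb{C}^{c_{r-k+1}}$, with the zero section recovering $N_{r-k+1}$.

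The main obstacle, in both parts, is controlling the global geometry hidden behind the clean local picture. In part (1) the delicate point is the \emph{reverse} inclusion in the fiber computation — that a rank-$k$ direction with a prescribed infinity point is forced into the balanced subspace — which I expect to settle by matching dimensions of the irreducible homogeneous pieces rather than by direct computation. In part (2) the harder issue is the Zariski closure in the definition of $\mathcal{M}_k$: I must verify that sweeping minimal rational curves through $\mathcal{M}_{k-1}$ produces exactly the stratum with $r-k+1$ infinities and no spurious extra components, and that the fibration $\kappa_k$ glues over the overlapping $K$-translates of the polysphere supplied by the Polysphere Theorem. This is where the homogeneous-bundle structure must be checked honestly rather than merely read off the model.
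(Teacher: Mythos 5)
Your overall strategy is sound, and for part (2) it is essentially the paper's: the paper likewise fixes the reference point $y_{r-k+1}=c_{\Pi_{r-k+1}}.o$ in a maximal polysphere, identifies $\mathcal{P}\cap (\mathcal{M}_k\backslash\mathcal{M}_{k-1})$ with the tuples having exactly $r-k+1$ infinite coordinates, and then spreads out by the $K^{\mathbb{C}}$-action. The two difficulties you flag at the end --- no spurious components in the Zariski closure, and gluing $\kappa_k$ over overlapping $K$-translates of the polysphere --- are resolved in the paper not by patching the polysphere model but by a single slice computation: it determines $Lie(Q_{r-k+1})$ root-by-root via Lemma \ref{inftra} (the three cases $b_\phi=0$, $b_\phi=2$, and $\phi\in\Delta_C^-$ with $b_\phi=1$), checks by explicit $\exp(e_\phi)$-computations that the non-reductive directions act trivially on the slice $\mathcal{Y}_{r-k+1}$ while the reductive part contains $\mathfrak{k}_1\oplus\mathfrak{k}_2$, and concludes $Q_{r-k+1}.\mathcal{Y}_{r-k+1}=\{c_{\Pi_{r-k+1}}\exp(v).o,\ v\in\mathfrak{m}^+_{r-k+1}\}$; the bundle $K^{\mathbb{C}}\times_{Q_{r-k+1}}\mathbb{C}^{c_{r-k+1}}$ then comes for free and no gluing is needed. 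Your phrase that $Q_{r-k+1}$ acts ``linearly through the isotropy representation'' is precisely what cannot be read off the model: a priori the unipotent part of $Q_{r-k+1}$ could move the slice, and this explicit verification is the real content of the paper's proof.

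For part (1) you take a genuinely different route. The paper deduces the fiber description of $\theta_k$ from Proposition \ref{tubefib}, used in advance: the punctured affine line of a rank-$k$ direction with infinity point $z$ sits inside a fiber of $\nu_k$, which is the Harish-Chandra cell of the balanced subspace through $o$ and $z$, so $\theta_k^{-1}(z)$ is the rank-$k$ locus of $\mathbb{P}T_o(W^o_k)$. You instead argue by pure orbit geometry: the stratum is a single $K^{\mathbb{C}}$-orbit (correct --- Proposition-Definition \ref{HSS-normal} plus the torus $\exp(\mathfrak{h}^{\mathbb{C}})\subset K^{\mathbb{C}}$, the strongly orthogonal $\alpha_i$ being independent so the $a_i$ can be normalized), hence the fiber over the base point is the single irreducible $Q_k$-orbit $Q_k/J_k$; one inclusion comes from $N_k(W^o_k)\cong pt$ for the tube-type $W^o_k$ via Proposition \ref{inftypart}; and equality follows since $\mathbb{P}T_o(W^o_k)\cap(\mathcal{C}_o^k(X)\backslash\mathcal{C}_o^{k-1}(X))$ is \emph{closed} in the stratum (by Proposition \ref{subvmrt}), so a closed irreducible subset of the irreducible fiber of the same dimension must be the whole fiber. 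This is a legitimate alternative, and your closedness observation is what makes the dimension argument rigorous rather than merely giving density. Be aware, though, that the identity you defer, $\dim\mathcal{C}_o^k(X)=\dim N_k+d_{r-k+1}$, is not free: it requires computing $[\mathfrak{k}^{\mathbb{C}},e_{\alpha_1}+\cdots+e_{\alpha_k}]\cap\mathfrak{m}^+$ in restricted roots (or a case-by-case check of secant dimensions), which is the same bookkeeping the paper packages into the proof of Proposition \ref{tubefib}. So your route trades the paper's slice computation for a different one of the same kind; the proposal is complete once that count and the $Q_{r-k+1}$-slice verification in part (2) are actually written out.
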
	
\begin{proof}
	\begin{enumerate}
		\item 
	
	This is actually a corollary of Proposition \ref{tubefib}, we use it in advance. If we fix an infinity point $z\in N_k= K^{\mathbb{C}}/Q_k$, $\theta_k^{-1}(z)$ is the collection of projectivization of rank $k$ tangent vectors at the origin toward to $z$, i.e. for any $[v]\in\theta_k^{-1}(z)$, the limit of the corresponding affine line $\{\exp(tv).o, t\in \mathbb{C}\}$ is $z$. From Proposition \ref{tubefib} this coincidences with the set of projectivization of rank $k$ tangent vectors at the origin of the balanced subspace with rank $k$ passing $z$. From Proposition \ref{tubehyper}, the latter is the complement of some degree $k$ hypersurface in the corresponding  projective space. Also from Proposition \ref{inftypart} we know tangent space of the balanced subspace 
	can be identified with a $Q_k$-representation and this gives $\mathcal{P}_k$ and hence the embedding $\mathcal{C}_o^k(X)\backslash\mathcal{C}_o^{k-1}(X)\hookrightarrow\mathcal{P}_k$.
	
	\item  To write down a maximal polysphere we choose a maximal set of strongly orthogonal roots $\Pi\subset \Delta^+_{M}$ starting from the highest root $\alpha_1$. Let $\alpha_j$ be the highest element strongly orthogonal to $\{\alpha_1,\cdots, \alpha_{j-1}\}$. Then $\Pi=\{\alpha_1,\cdots,\alpha_r\}$ and the polysphere can be written as $\mathcal{P}=\{\prod_{1\leq i\leq r}\exp(t_ie_{\alpha_i}).o\}\subset X$ where $ t_i \in \mathbb{C}\cup\{\infty\}$ (when $t_i=\infty$, it means the corresponding factor is given by $\exp(\frac{\pi}{2}x_{\alpha_i})$). We know $\mathcal{P}\cap \mathcal{M}_k\backslash \mathcal{M}_{k-1}=\bigcup_{ \#\{i,t_i=\infty\}=r-k+1}(t_1,\cdots, t_n)$. Let $y_{r-k+1}$ be the point in $\mathcal{P}\cap N_{r-k+1}$ given by $t_1=\cdots=t_{r-k+1}=\infty, t_i=0 (i\geq r-k+2)$. Alternatively we may write
	\[y_{r-k+1}=c_{\Pi_{r-k+1}}.o\]
	where $\Pi_{r-k+1}=\{\alpha_1,\cdots, \alpha_{r-k+1}\}$.
	We then consider the $K^{\mathbb{C}}$-action on the reference point $y_{r-k+1}$ with isotropy group $Q_{r-k+1}$ and compute the action of $Q_{r-k+1}$ on the affine space in $\mathcal{P}\cap \mathcal{M}_{k}\backslash \mathcal{M}_{k-1}$ containing $y_{r-k+1}$ given by  \[\mathcal{Y}_{r-k+1}=\{c_{\Pi_{r-k+1}}\prod_{r-k+2\leq i\leq r}\exp(t_ie_{\alpha_i}).o, t_i\in \mathbb{C}\}\cong \mathbb{C}^{k-1}\]
	 Let $\Delta_C$ be the set of compact roots, i.e. the corresponding root spaces are contained in $\mathfrak{k}^{\mathbb{C}}$. From Lemma \ref{inftra} and the discussion in Proposition \ref{inftypart}
     we have
     \[Lie(Q_{r-k+1})=\mathfrak{h}^{\mathbb{C}}\oplus \mathbb{C}\{e_{\phi}, \phi\in \Delta_C , b_\phi=0 \ \text{or} \ 2\}\oplus \mathbb{C}\{e_{\phi}, \phi\in \Delta^-_C , b_\phi=1\}\]
  where $b_{\phi}$ is the number of roots in $\Pi_{r-k+1}$ not strongly orthogonal to $\phi$. In terms of restricted root, the roots with root vectors in $Lie(Q_{r-k+1})$ are corresponding to restricted roots of the form
  	\begin{itemize}
  	\item $\phi\in \Delta_C, b_{\phi}=0:  \phi|_{\mathfrak{h}^-}=\frac{1}{2}(\alpha_\ell-\alpha_j) ( r-k+2\leq \ell, j \leq r)$ or $\phi|_{\mathfrak{h}^-}=\pm\frac{1}{2}\alpha_\ell (r-k+2\leq \ell \leq r)$ (if exist) or $\phi|_{\mathfrak{h}^-}=0$ (if exist).
  	
  	\item $\phi\in \Delta_C, b_{\phi}=2: \phi|_{\mathfrak{h}^-}=\frac{1}{2}(\alpha_\ell-\alpha_j) ( 1\leq \ell, j \leq r-k+1)$.
  	
  	\item $\phi\in \Delta^-_C, b_{\phi}=1: \phi|_{\mathfrak{h}^-}=\frac{1}{2}(\alpha_\ell-\alpha_j) ( 1\leq \ell<r-k+2\leq j \leq r)$ or $\phi|_{\mathfrak{h}^-}=\frac{1}{2}\alpha_\ell (1\leq \ell \leq r-k+1)$ (if exist).
  \end{itemize}
 Moreover denoting the reductive part of $Lie(Q_{r-k+1})$ by $\mathfrak{R}(Lie(Q_{r-k+1}))$ we have 
 \[\mathfrak{R}(Lie(Q_{r-k+1}))=\mathfrak{h}^{\mathbb{C}}\oplus \mathbb{C}\{e_{\phi}, \phi\in \Delta_C , b_\phi=0 \ \text{or} \ 2\}\supset \mathfrak{k}_1\oplus \mathfrak{k}_2 \]
 where $\mathfrak{k}_1, \mathfrak{k}_2$ are  reductive parts for the parabolic subalgebras associated to the balanced subspace with rank $r-k+1$, and the Hermitian characteristic symmetric subspace of rank $k-1$.
 
  From Lemma \ref{inftra} we know $Adc_{\Pi_{r-k+1}}$ stabilizes $\mathfrak{h}^{\mathbb{C}}$ and then the action from $\exp(\mathfrak{h}^{\mathbb{C}})$ keeps $\mathcal{Y}_{r-k+1}$ invariant. A root $\phi$ in $\Delta_C$ with $b_{\phi}=0$ satisfies $Adc_{\Pi_{r-k+1}}e_{\phi}=e_{\phi}$. For a root $\phi$ in $\Delta_C$ with $b_{\phi}=2$, up to nonzero constant multiplication we know $Adc_{\Pi_{r-k+1}}e_{\phi}$ is equal to  $e_{\phi+\alpha_j-\alpha_\ell}$ and hence the action $\exp(e_{\phi})$ given by such $e_{\phi}$ keeps $\mathcal{Y}_{r-k+1}$ invariant. For a root $\phi\in \Delta^-_C$ with $\phi|_{\mathfrak{h}^-}=\frac{1}{2}(\alpha_\ell-\alpha_j) ( 1\leq \ell<r-k+2\leq j \leq r)$ or $\phi|_{\mathfrak{h}^-}=\frac{1}{2}\alpha_\ell (1\leq \ell \leq r-k+1)$, we know 
  \[\begin{aligned}
 &\exp(e_{\phi})c_{\Pi_{r-k+1}}\prod_{r-k+2\leq i\leq r}\exp(t_ie_{\alpha_i}).o =c_{\Pi_{r-k+1}}\exp(e_{\phi-\alpha_\ell})\prod_{r-k+2\leq i\leq r}\exp(t_ie_{\alpha_i}).o \\
  &=c_{\Pi_{r-k+1}}\prod_{r-k+2\leq i\leq r}\exp(e^{ade_{\phi-\alpha_\ell}}t_ie_{\alpha_i}).o\\&=c_{\Pi_{r-k+1}}\left(\prod_{r-k+2\leq i\leq r, i\neq j}\exp(t_ie_{\alpha_i})\right) \exp(t_je_{\alpha_j}+t'e_{\phi+\alpha_j-\alpha_{\ell}}+t''e_{-\alpha_\ell}).o \ \\&(t'=t''=0 \ \text{if} \ \phi|_{\mathfrak{h}^-}=\frac{1}{2}\alpha_\ell )\\
    &=c_{\Pi_{r-k+1}}\prod_{r-k+2\leq i\leq r}\exp(t_ie_{\alpha_i}).o  \ (\text{as} \ [e_{\alpha_j}, t'e_{\phi-\alpha_{\ell}+\alpha_j}+t''e_{-\alpha_\ell}]=0)
  \end{aligned}
 \]
   Thus the action $\exp(e_{\phi})$ given by such $e_{\phi}$ keeps $\mathcal{Y}_{r-k+1}$ invariant. In conclusion we can obtain that nontrivial action just comes from root vectors with roots $\phi\in \Delta_C$ with $b_{\phi}=0$ and hence
\[    
  Q_{r-k+1}.\mathcal{Y}_{r-k+1}\subset \{c_{\Pi_{r-k+1}}\exp(v).o, v\in \mathfrak{m}_{r-k+1}^+\}   \ (\dagger)
  \]
  where $\mathfrak{m}_{r-k+1}^+=\mathbb{C}\{e_{\beta}, \beta|_{\mathfrak{h}^-}=\frac{1}{2}(\alpha_{\ell}+\alpha_j)\ \text{or} \ \frac{1}{2}\alpha_{\ell}  (r-k+2\leq \ell \leq j\leq r) \}$ is precisely tangent space of the Hermitian characteristic symmetric subspace. Also we know the s $\mathfrak{R}(Lie(Q_{r-k+1}))\supset \mathfrak{k}_1\oplus \mathfrak{k}_2 $, therefore the other inclusion in $(\dag)$ holds. Thus 
  \[ Q_{r-k+1}.\mathcal{Y}_{r-k+1}= \{c_{\Pi_{r-k+1}}\exp(v).o, v\in \mathfrak{m}_{r-k+1}^+\}\]
  By $K^\mathbb{C}$-action on the polysphere passing $o$, we know $\mathcal{M}_k\backslash \mathcal{M}_{k-1}$ can be exhausted and 
  $\mathcal{M}_k\backslash \mathcal{M}_{k-1}=K^{\mathbb{C}}\times_{Q_{r-k+1}}\mathbb{C}^{c_{r-k+1}}$ is a $Q_{r-k+1}$ homogeneous bundle over $N_{r-k+1}$ with rank $c_{r-k+1}$. Also from the discussion we can see each fiber is Harish-Chandra embedding of $\mathbb{C}^{c_{r-k+1}}$ into a Hermitian characteristric symmetric subspace of rank $(k-1)$ and the zero point is in $N_{r-k+1}$.
\end{enumerate}
\end{proof}

On the other hand, we construct the following subvarieties which serves as the loci of chains of minimal rational curves.
Let $\mathcal{V}_0=\{o\}$. Let 
\[\mathcal{V}_{k}=\overline{\bigcup\{\text{mrcs passing} \ \mathcal{V}_{k-1}\}}.\]  
We know $\mathcal{V}_{k-1}\subset \mathcal{V}_k$. General properties on geometry of loci of chains of mininal rational curves were discussed in \cite{MR2155089}. Note that in our construction for $\mathcal{M}_k$, we start from the subvariety $N_r$ while in the construction for $\mathcal{V}_k$, we start from the base point $o$. The latter is precisely the same as the construction in \cite{MR2155089}.

 As in Proposition \ref{fiboverinfty}(2) we have

\begin{proposition}\label{tubefib}
	$N_{k}\subset \mathcal{V}_k$ and 
	$\mathcal{V}_k\backslash \mathcal{V}_{k-1}\cong K^{\mathbb{C}}\times_{Q_{k}}\mathbb{C}^{d_{r-k+1}+1}$ is a $Q_{k}$-homogeneous bundle over $N_{k}$ with rank $d_{r-k+1}+1$.
	It can be written as 
	\[\nu_k :\mathcal{V}_k\backslash \mathcal{V}_{k-1}\rightarrow N_{k}\]
	where each fiber is Harish-Chandra embedding of $\mathbb{C}^{d_{r-k+1}+1}$ into a balanced subspace with rank $k$ and the zero point is in $N_{k}$. Therefore $\mathcal{V}_k$ is the variety swept by all balanced subspaces with rank $k$ passing through $o$.
\end{proposition}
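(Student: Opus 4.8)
The plan is to run the argument of Proposition \ref{fiboverinfty}(2) essentially verbatim, interchanging the roles played there by the infinity set $\Pi_{r-k+1}$ and the characteristic symmetric subspace with those of $\Pi_k=\{\alpha_1,\ldots,\alpha_k\}$ and the balanced subspace. Fix the maximal polysphere $\mathcal{P}=\{\prod_{1\le i\le r}\exp(t_ie_{\alpha_i}).o\}$ attached to $\Pi=\{\alpha_1,\ldots,\alpha_r\}$. Because $o$ is the origin of $\mathcal{P}$ and a minimal rational curve inside $\mathcal{P}$ moves a single $\mathbb{P}^1$-factor, every point of $\mathcal{P}$ with at most $k$ nonzero coordinates is reached by a chain of $k$ such curves and hence lies in $\mathcal{V}_k$; I expect the reverse inclusion as well, so that $(\mathcal{V}_k\setminus\mathcal{V}_{k-1})\cap\mathcal{P}$ is exactly the set of points with $k$ nonzero coordinates. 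As the reference point for the fiber I take the infinity corner $z_k=c_{\Pi_k}.o$ with $c_{\Pi_k}=\exp(\frac{\pi}{2}\sum_{\alpha_i\in\Pi_k}x_{\alpha_i})$; it lies in $N_k$, and the polysphere trace of the fiber through it is $\mathcal{Z}_k=\{c_{\Pi_k}\prod_{1\le i\le k}\exp(t_ie_{\alpha_i}).o:t_i\in\mathbb{C}\}$, whose members have exactly $k$ nonzero coordinates.

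First I would identify the isotropy group $Q_k=K^{\mathbb{C}}\cap c_{\Pi_k}Pc_{\Pi_k}^{-1}$ of the $K^{\mathbb{C}}$-action on $N_k=K^{\mathbb{C}}/Q_k$ at $z_k$. As in Proposition \ref{fiboverinfty}(2), Lemma \ref{inftra} together with the Restricted Root Theorem give
\[
\mathrm{Lie}(Q_k)=\mathfrak{h}^{\mathbb{C}}\oplus\mathbb{C}\{e_\phi:\phi\in\Delta_C,\ b'_\phi=0\ \text{or}\ 2\}\oplus\mathbb{C}\{e_\phi:\phi\in\Delta_C^-,\ b'_\phi=1\},
\]
where $b'_\phi$ is the number of roots of $\Pi_k$ not strongly orthogonal to $\phi$. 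The decisive difference from Proposition \ref{fiboverinfty}(2) is that here the fiber is spanned by the $\infty$-directions rather than by their complement: the reductive roots with $b'_\phi=2$, namely $\phi|_{\mathfrak{h}^-}=\frac{1}{2}(\alpha_\ell-\alpha_j)$ with $\ell,j\le k$, are precisely the $\mathrm{Ad}\,c_{\Pi_k}$-conjugates of the isotropy directions of the standard balanced subspace $W^o_k$ at $o$, and acting on $\mathcal{Z}_k$ they generate the off-diagonal directions $e_\beta$ with $\beta|_{\mathfrak{h}^-}=\frac{1}{2}(\alpha_\ell+\alpha_j)$, $\ell<j\le k$. Thus they sweep $\mathcal{Z}_k$ out to the full chart of $\mathfrak{m}^+_{\mathrm{bal}}=\mathbb{C}\{e_\beta:\beta|_{\mathfrak{h}^-}=\frac{1}{2}(\alpha_\ell+\alpha_j),\ 1\le\ell\le j\le k\}$, the tangent space of the rank $k$ balanced subspace of Proposition \ref{tubetangent}. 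Conversely, the roots with $b'_\phi=0$ annihilate $\mathfrak{m}^+_{\mathrm{bal}}$ and the Cartan part only rescales it, while the unipotent roots with $b'_\phi=1$ leave $\mathcal{Z}_k$ invariant by the same commutator computation that established the displayed identity $(\dagger)$ in Proposition \ref{fiboverinfty}(2). Hence
\[
Q_k.\mathcal{Z}_k=\{c_{\Pi_k}\exp(v).o:v\in\mathfrak{m}^+_{\mathrm{bal}}\},
\]
which is exactly the Harish-Chandra chart, based at the infinity point $z_k\in N_k$, of the rank $k$ balanced subspace $c_{\Pi_k}.W^o_k$, a copy of $\mathbb{C}^{d_{r-k+1}+1}$.

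Finally I would globalize. Since $K^{\mathbb{C}}$ fixes $o$ and permutes the minimal rational curves through $o$, the locus $\mathcal{V}_k$ is $K^{\mathbb{C}}$-invariant, and by the polysphere theorem $X=\bigcup_{g\in K}g\mathcal{P}$; therefore the single fiber $Q_k.\mathcal{Z}_k$ sweeps out all of $\mathcal{V}_k\setminus\mathcal{V}_{k-1}$, yielding the $Q_k$-homogeneous bundle $\mathcal{V}_k\setminus\mathcal{V}_{k-1}=K^{\mathbb{C}}\times_{Q_k}\mathbb{C}^{d_{r-k+1}+1}\to N_k$, with $N_k\subset\mathcal{V}_k$ recovered as the image of the zero section. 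For the last assertion, that $\mathcal{V}_k=K^{\mathbb{C}}.W^o_k$ is swept by the rank $k$ balanced subspaces through $o$, I would match polysphere traces: $W^o_k\cap\mathcal{P}$ is the sub-polysphere on $\{\alpha_1,\ldots,\alpha_k\}$, its $K$-translates under the signed permutations of the Restricted Root Theorem realize every $k$-element sub-polysphere, these exhaust $\mathcal{V}_k\cap\mathcal{P}$, and a final application of $K^{\mathbb{C}}$ gives $\mathcal{V}_k=K^{\mathbb{C}}.W^o_k$.

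The main obstacle I anticipate is conceptual rather than computational: the chains defining $\mathcal{V}_k$ emanate from $o$, yet the resulting bundle is based over the infinity locus $N_k$ and its fibers are balanced subspaces based at infinity. Reconciling the two descriptions rests on the tube-type symmetry of a balanced subspace, which interchanges its origin and its sink, and one must verify carefully that the chain-theoretic stratum $(\mathcal{V}_k\setminus\mathcal{V}_{k-1})\cap\mathcal{P}$ really coincides, after applying $K^{\mathbb{C}}$, with the group-theoretic orbit $Q_k.\mathcal{Z}_k$ — in particular that no chain of $k$ minimal rational curves starting at $o$ escapes the union of rank $k$ balanced subspaces. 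Once this identification is secured, the only remaining input is the $b'_\phi=1$ invariance, which is routine given $(\dagger)$.
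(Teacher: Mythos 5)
Your proposal is correct and follows essentially the same route as the paper's proof: the same polysphere trace with reference point $z_k=c_{\Pi_k}.o$, the same decomposition of $Lie(Q_k)$ by the count $b_\phi$ of roots in $\Pi_k$ not strongly orthogonal to $\phi$, the same observation that only the $b_\phi=2$ compact roots act nontrivially and sweep $\mathcal{Z}_k$ out to the Harish-Chandra chart $\{c_{\Pi_k}\exp(v).o,\ v\in\mathfrak{u}_k^+\}$ of a rank $k$ balanced subspace, followed by the same $K^{\mathbb{C}}$-exhaustion. The two points you flag as needing care (the polysphere trace of $\mathcal{V}_k\backslash\mathcal{V}_{k-1}$ and the $b_\phi=1$ invariance) are handled in the paper at exactly the level of your proposal, by direct assertion and by the $(\dagger)$-type commutator computation from Proposition \ref{fiboverinfty}(2), respectively.
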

\begin{proof}
	The proof is parallel to the proof of Proposition \ref{fiboverinfty}(2).
	We still write the maximal polysphere as $\mathcal{P}=\{\prod_{1\leq i\leq r}\exp(t_ie_{\alpha_i}).o\}\subset X$ where $ t_i \in \mathbb{C}\cup\{\infty\}$ (when $t_i=\infty$, it means the corresponding factor is given by $\exp(\frac{\pi}{2}x_{\alpha_i})$). We know $\mathcal{P}\cap \mathcal{V}_k\backslash \mathcal{V}_{k-1}=\bigcup_{ \#\{i,t_i=0\}=r-k}(t_1,\cdots, t_n)$.
	Let $z_k$ be the point in $\mathcal{P}\cap N_{k}$ given by $t_1=\cdots=t_{k}=\infty, t_i=0 (i\geq k+1)$. Alternatively we may write
	\[z_k=c_{\Pi_{k}}.o\]
	where $\Pi_k=\{\alpha_1, \cdots, \alpha_k\}$.
	We then consider the $K^{\mathbb{C}}$-action on the reference point $z_{k}$ with isotropy group $Q_{k}$ and compute the action of $Q_{k}$ on the affine space in $\mathcal{P}\cap \mathcal{V}_{k}\backslash \mathcal{V}_{k-1}$ containing $z_k$ given by  \[\mathcal{Z}_{k}=\{c_{\Pi_{k}}\prod_{1\leq i\leq k}\exp(t_ie_{\alpha_i}).o, t_i\in \mathbb{C}\}\cong \mathbb{C}^k\]
	Recall that $\Delta_C$ is the set of compact roots. Also 
	we have
	\[Lie(Q_{k})=\mathfrak{h}^{\mathbb{C}}\oplus \mathbb{C}\{e_{\phi}, \phi\in \Delta_C , b_\phi=0 \ \text{or} \ 2\}\oplus \mathbb{C}\{e_{\phi}, \phi\in \Delta^-_C , b_\phi=1\}\]
	where $b_{\phi}$ is the number of roots in $\Pi_{k}$ not strongly orthogonal to $\phi$. 
	Using similar argument as in the proof of Proposition \ref{fiboverinfty}(2) we know 
	nontrivial action just comes from root vectors with roots $\phi\in \Delta_C$ with $b_{\phi}=2$ and also
	\[    
	Q_{k}.\mathcal{Z}_{k}= \{c_{\Pi_{k}}\exp(v).o, v\in \mathfrak{u}_{k}^+\}   
	\]
	where $\mathfrak{u}_{k}^+=\mathbb{C}\{e_{\beta}, \beta|_{\mathfrak{h}^-}=\frac{1}{2}(\alpha_{\ell}+\alpha_j), (1\leq \ell \leq j\leq k) \}$ is precisely tangent space of the balanced subspace at $o$.
	
	By $K^\mathbb{C}$-action on the polysphere passing $o$, we know $\mathcal{V}_k\backslash \mathcal{V}_{k-1}$ can be exhausted and 
	$\mathcal{V}_k\backslash \mathcal{V}_{k-1}=K^{\mathbb{C}}\times_{Q_{k}}\mathbb{C}^{d_{r-k+1}+1}$ is a $Q_k$ homogeneous bundle over $N_{k}$ with rank $d_{r-k+1}+1$. Also
	 we can see each fiber is  Harish-Chandra embedding of $\mathbb{C}^{d_{r-k+1}+1}$ into a balanced subspace with rank $k$ and the zero point is in $N_{k}$.
\end{proof}
\begin{corollary}\label{transversal}
	For any point $x\in \mathcal{M}_k\backslash \mathcal{M}_{k-1}$,
there is a balanced subspace with rank $r-k+1$ passing $x$ which is transversal to $\mathcal{M}_{k}$ at $x$.
\end{corollary}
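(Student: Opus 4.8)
The plan is to reduce, by homogeneity, to a single reference point of each fibre of the fibration $\kappa_k\colon\mathcal M_k\backslash\mathcal M_{k-1}\to N_{r-k+1}$ and there read off transversality at the level of restricted roots. \textbf{Reduction to the reference fibre.} Because $\mathcal M_k$ is generated from $N_r$ by minimal rational curves and both are $K^{\mathbb C}$-invariant, the stratum $\mathcal M_k\backslash\mathcal M_{k-1}$ is $K^{\mathbb C}$-invariant, and by Proposition~\ref{fiboverinfty}(2) the map $\kappa_k$ is $K^{\mathbb C}$-equivariant with $K^{\mathbb C}$ acting transitively on $N_{r-k+1}=K^{\mathbb C}/Q_{r-k+1}$. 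Replacing $x$ by $g^{-1}x$ and any candidate subspace $W$ by $g^{-1}W$ for suitable $g\in K^{\mathbb C}$, I may therefore assume $x=c_{\Pi_{r-k+1}}\exp(v).o$ lies in the reference fibre over $y_{r-k+1}=c_{\Pi_{r-k+1}}.o$, with $v\in\mathfrak m^+_{r-k+1}$ the tangent space of the rank $(k-1)$ characteristic symmetric subspace; note $\exp(v)$ commutes with $c_{\Pi_{r-k+1}}$ since $Adc_{\Pi_{r-k+1}}$ fixes $\mathfrak m^+_{r-k+1}$ (Lemma~\ref{inftra}), so $x=\exp(v).y_{r-k+1}$.

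\textbf{The reference point $v=0$.} Let $W'$ be the rank $r-k+1$ balanced subspace through $o$ with $T_oW'=\mathbb C\{e_\beta:\beta|_{\mathfrak h^-}=\tfrac12(\alpha_\ell+\alpha_j),\,1\le\ell\le j\le r-k+1\}$ (Proposition~\ref{tubetangent}); by Proposition~\ref{tubefib} it is the fibre of $\mathcal V_{r-k+1}\to N_{r-k+1}$ over $y_{r-k+1}$, hence contains $y_{r-k+1}$, and $T_{y_{r-k+1}}W'=dc_{\Pi_{r-k+1}}(T_oW')$ since $c_{\Pi_{r-k+1}}$ lies in its isometry group. Proposition~\ref{fiboverinfty}(2) gives $T_{y_{r-k+1}}\mathcal M_k=dc_{\Pi_{r-k+1}}\big((Adc_{\Pi_{r-k+1}}\mathfrak k^{\mathbb C}\cap\mathfrak m^+)\oplus\mathfrak m^+_{r-k+1}\big)$, so after applying $dc_{\Pi_{r-k+1}}^{-1}$ transversality at $y_{r-k+1}$ amounts to
\[T_oW'+(Adc_{\Pi_{r-k+1}}\mathfrak k^{\mathbb C}\cap\mathfrak m^+)+\mathfrak m^+_{r-k+1}=\mathfrak m^+.\]
This I check on restricted-root vectors: $T_oW'$ supplies the pairs $\tfrac12(\alpha_\ell+\alpha_j)$ with $\ell\le j\le r-k+1$, the characteristic block $\mathfrak m^+_{r-k+1}$ supplies those with $\ell,j\ge r-k+2$ and the half-roots $\tfrac12\alpha_\ell$ with $\ell\ge r-k+2$, and by Proposition~\ref{inftypart} the block $Adc_{\Pi_{r-k+1}}\mathfrak k^{\mathbb C}\cap\mathfrak m^+$ supplies exactly the remaining mixed pairs $\tfrac12(\alpha_\ell+\alpha_j)$ with $\ell\le r-k+1<j$ together with the half-roots $\tfrac12\alpha_\ell$, $\ell\le r-k+1$.

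\textbf{Reduction of the general fibre point.} Let $U\subset\mathfrak m^+_{r-k+1}$ be the set of $v$ for which some rank $r-k+1$ balanced subspace through $x=c_{\Pi_{r-k+1}}\exp(v).o$ is transversal to $\mathcal M_k$ there. Since transversality is open and all data are algebraic in $v$ (as $\mathfrak m^+$ is abelian, $\exp$ is polynomial), $U$ is open, and $0\in U$ by the previous step. Moreover $U$ is invariant under the isotropy $K_\Lambda\subset K$ of the characteristic symmetric subspace (which fixes $o$ and, by strong orthogonality, $y_{r-k+1}$, preserves $\mathcal M_k$, and carries balanced subspaces to balanced subspaces) and under the Cartan torus. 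By the rank normal form applied inside the characteristic symmetric subspace (Proposition-Definition~\ref{HSS-normal}), every $K_\Lambda$-orbit meets the diagonal vectors $\sum_i a_ie_{\alpha_i}$ ($i\ge r-k+2$), whose nonzero coefficients the torus normalizes to $1$; hence if the closed complement $\mathfrak m^+_{r-k+1}\backslash U$ were nonempty it would contain one of the finitely many representatives $v_s=\sum_{i=r-k+2}^{r-k+1+s}e_{\alpha_i}$.

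It therefore remains to verify transversality at $x=c_{\Pi_{r-k+1}}\exp(v_s).o$, which I do with the translated subspace $W_x=\exp(v_s).W'$: pulling back by $d\exp(-v_s)$ and $dc_{\Pi_{r-k+1}}^{-1}$, the condition becomes
\[T_oW'+\{\zeta_+-[v_s,\zeta_0]:\zeta\in Adc_{\Pi_{r-k+1}}\mathfrak k^{\mathbb C}\}+\mathfrak m^+_{r-k+1}=\mathfrak m^+,\]
where $\zeta_+,\zeta_0$ denote the $\mathfrak m^+$- and $\mathfrak k^{\mathbb C}$-components of $\zeta$. \textbf{The main obstacle} is exactly this identity: one must show the correction terms $[v_s,\zeta_0]$ do not spoil the spanning established at $v=0$. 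I expect this to follow from a bracket computation on restricted-root vectors in the spirit of Lemma~\ref{inftra} and Proposition~\ref{inftypart}, using that each $\alpha_i$ occurring in $v_s$ is strongly orthogonal to $\Pi_{r-k+1}$, so that $[v_s,\cdot]$ only moves a root vector of mixed or characteristic type within the span already controlled; for the classical $X$ the explicit matrix models of $\mathcal M_k$ and of the balanced subspaces give the identity directly, and $E_7/P_7$ is handled by the Jordan-algebra description used in Lemma~\ref{secantrank}.
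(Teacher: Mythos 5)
Your first two steps coincide with the paper's own argument: the paper also reduces to $x=c_{\Pi_{r-k+1}}.o\in N_{r-k+1}$ and verifies transversality there by the same restricted-root bookkeeping, namely $T_o(c_{\Pi_{r-k+1}}W^x_{r-k+1})+T_o(c_{\Pi_{r-k+1}}Z^x_{k-1})+T_o(c_{\Pi_{r-k+1}}N_{r-k+1})=\mathfrak{m}^+$ via Proposition \ref{inftypart} and the Restricted Root Theorem. The genuine gap is in your treatment of a general point of the stratum, which is the actual content of the corollary beyond the fixed-point locus. After the (correct but elaborate) reduction to the finitely many representatives $v_s$ via openness and $K_\Lambda$-torus orbits, you arrive at the spanning identity with the correction terms $\zeta_+-[v_s,\zeta_0]$ and then explicitly defer it (``I expect this to follow from a bracket computation\ldots''), offering only a sketch by matrix models for the classical types and the Jordan algebra for $E_7/P_7$. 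That unproved identity is exactly where the difficulty lives, so the proposal as written does not establish the statement.

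The paper closes this step with a single translation, which also dissolves your reduction apparatus. For $x\in\mathcal{M}_k\backslash\mathcal{M}_{k-1}$ with $y=\kappa_k(x)\in N_{r-k+1}$, it takes $g\in Aut(Z^{y}_{k-1})$ carrying $y$ to $x$ --- in your coordinates simply $g=\exp(v)$ with $v\in\mathfrak{m}^+_{r-k+1}$, so no diagonalization to the $v_s$ is needed --- and uses $g.W^{y}_{r-k+1}$, which is again a balanced subspace because invariant geodesy is a $G$-invariant notion. The point your bracket computation is trying to reprove by hand is that $dg$ carries $T_{y}\mathcal{M}_k$ onto $T_{x}\mathcal{M}_k$; this holds because $\exp(v)$ preserves $\mathcal{M}_k$ itself. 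One way to see it: $\exp(v)$ acts on $\mathbb{P}^N$ unipotently, raising the weights of the $\mathbb{C}^*$-decomposition $[z,\mathbf{z}_0,\ldots,\mathbf{z}_{r-1}]$, hence it preserves the top-weight span $\mathbb{P}_{I_k}$ and therefore the linear section $\mathcal{M}_k=\mathbb{P}_{I_k}\cap X$ of Lemma \ref{linearsec} (it also maps the fibre $\{c_{\Pi_{r-k+1}}\exp(v').o\}$ to itself since $\mathfrak{m}^+$ is abelian). Transversality at $y$, which you already established, then transports wholesale to $x$, and your correction terms are absorbed automatically as $dg$ applied to the span at $y$. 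So the repair is to delete the openness/orbit-representative machinery and replace the unproved identity by the observation that each fibre of $\kappa_k$ is an orbit of a subgroup of $G$ that preserves $\mathcal{M}_k$ and permutes balanced subspaces.
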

\begin{proof}
	We first consider a point $x\in N_{r-k+1} \subset \mathcal{M}_k$. This is straightforward from the proof of Proposition \ref{inftypart}, Proposition \ref{fiboverinfty}(2) and  Proposition \ref{tubefib}. More precisely, without loss of generality we may assume that $x=c_{\Pi_{r-k+1}}.o$ where $\Pi_{r-k+1}=\{\alpha_1, \cdots, \alpha_{r-k+1}\}$. From Proposition \ref{fiboverinfty}(2) and Proposition \ref{tubefib} we know there is a Hermitian characteristic symmetric subspace of rank $k-1$ and a balanced subspace of rank $r-k+1$ passing $x$, which are denoted by $Z_{k-1}^x, W^x_{r-k+1}$ respectively. If suffices to show that $T_x(Z_{k-1}^x)+T_x(W_{r-k+1}^x)+T_x(N_{r-k+1})=T_x(X)$. While we have 
	\[\begin{cases}
		T_o(c_{\Pi_{r-k+1}}Z_{k-1}^x)=\mathfrak{m}_{r-k+1}^+=\mathbb{C}\{e_{\beta}, \beta|_{\mathfrak{h}^-}=\frac{1}{2}(\alpha_{\ell}+\alpha_j)\ \text{or} \ \frac{1}{2}\alpha_{\ell}  (r-k+2\leq \ell \leq j\leq r) \} \\
		T_o(c_{\Pi_{r-k+1}}W_{r-k+1}^x)=\mathfrak{u}_{r-k+1}^+=\mathbb{C}\{e_{\beta}, \beta|_{\mathfrak{h}^-}=\frac{1}{2}(\alpha_{\ell}+\alpha_j), (1\leq \ell \leq j\leq r-k+1) \} \\
		T_o(c_{\Pi_{r-k+1}}N_{r-k+1})=Adc_{\Pi_{r-k+1}}\mathfrak{k}^{\mathbb{C}}\cap \mathfrak{m}^+=\mathbb{C}\{e_{\beta}, \beta|_{\mathfrak{h}^-}=\frac{1}{2}(\alpha_{\ell}+\alpha_j)\ \text{or} \ \frac{1}{2}\alpha_{\ell}  (1\leq \ell \leq r-k+1< j\leq r) \} 
	\end{cases}
	\]
	where the last equality is from Proposition \ref{inftypart}.
	Now applying the Restricted Root Theorem we have $T_o(X)=T_o(c_{\Pi_{r-k+1}}Z_{k-1}^x)+T_o(c_{\Pi_{r-k+1}}W_{r-k+1}^x)+T_o(c_{\Pi_{r-k+1}}N_{r-k+1})$ identifying $T_o(X)$ with $\mathfrak{m}^+$. Then we go back to $x$ the tangent space equality $T_x(Z_{k-1}^x)+T_x(W_{r-k+1}^x)+T_x(N_{r-k+1})=T_x(X)$ still holds. 
	Now for any $x\in \mathcal{M}_k\backslash \mathcal{M}_{k-1}$, we know for $\kappa_k(x)\in N_{r-k+1}$ there is such a balanced subspace $W^{\kappa_k(x)}_{r-k+1}$. Also we know $x$ can be translated from $\kappa_k(x)$ by a group action $g\in Aut(Z^{\kappa(x)}_{k-1})$, then the balanced subspace $g.W^{\kappa_k(x)}_{r-k+1}$ will be the desired one.
\end{proof}

\begin{remark}
	From Proposition \ref{tubefib} we can see that $N_k$ parametrizes all balanced subspaces with rank $k$ passing $o$ and this gives an explanation for the Tits fibartion associated to the pair of subdiagram type in this special case. For example, for the balanced subspace $Q^8\subset \mathbb{OP}^2$, adapting the convention in Figure \ref{E6figure} we know in Tits fibration the parameter space of such $Q^8$ passing $o$ is given by deleting the marked node $\alpha_1$ and marking the node adjacent to the subdiagram, which is  $\alpha_5$. This is precisely $N_2\cong Q^8$.
\end{remark}


\begin{remark}
	There may exist more perspectives for the stratification constructed on $X$. We give one more example here. We know the compactifying divisor $D\subset X$ is actually 
	a section $s$ of the ample generator of $Pic(X)$ which vanishes to order $r$ at some point. Let $V_j(s)$ be the set of points at which the vanishing order of $s$ is at least $j$, which is precisely the multiplicity subspaces defined in \cite{MR1115788}. We can see that $V_j(s)=\mathcal{M}_{r-j+1}$.
\end{remark}

\subsection{Singularities and smoothing the stratification}
For the stratification, we describe the singular loci for the subvarieties.
\begin{proposition}
	\begin{enumerate}
		\item The singular locus of $\mathcal{C}_o^k(X)$ is 
	$Sing(\mathcal{C}_o^{k}(X))=\mathcal{C}_o^{k-1}(X) (2\leq k\leq r-1)$;
	\item The singular locus of $\mathcal{M
	}_k$ is
	$Sing(\mathcal{M}_{k})=\mathcal{M}_{k-1} (2\leq k\leq r)$.	
\end{enumerate}
\end{proposition}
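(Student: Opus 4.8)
The plan is to prove both parts by the same two-step scheme: first use Proposition \ref{fiboverinfty} to get the inclusion $\mathrm{Sing}(\cdot)\subseteq(\text{deeper stratum})$, and then establish the reverse inclusion by checking that at \emph{every} point of the deeper stratum the embedded (Zariski) tangent space already fills the whole ambient space; since in each case the variety is a proper subvariety of its ambient space, this forces such a point to be singular. For the forward inclusions there is nothing to do beyond citing Proposition \ref{fiboverinfty}: by part (1) the open stratum $\mathcal{C}_o^k(X)\setminus\mathcal{C}_o^{k-1}(X)$ is a fibre bundle over the homogeneous (hence smooth) base $N_k$ with fibre the complement of a hypersurface in a projective space, so it is smooth, giving $\mathrm{Sing}(\mathcal{C}_o^k(X))\subseteq\mathcal{C}_o^{k-1}(X)$; and by part (2) the stratum $\mathcal{M}_k\setminus\mathcal{M}_{k-1}\cong K^{\mathbb{C}}\times_{Q_{r-k+1}}\mathbb{C}^{c_{r-k+1}}$ is the total space of a homogeneous vector bundle over $N_{r-k+1}$, hence smooth, giving $\mathrm{Sing}(\mathcal{M}_k)\subseteq\mathcal{M}_{k-1}$.

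For the reverse inclusion in (1), fix any $z\in\mathcal{C}_o^{k-1}(X)$, say $z=[v]$ with $\mathrm{rank}(v)\le k-1$. By the rank subadditivity established in the proof of Lemma \ref{secantrank}, for every $x=[w]\in\mathcal{C}_o^1(X)$ the line $\overline{zx}$ consists of classes of vectors $v+tw$ of rank $\le k$, so $\overline{zx}\subseteq\mathcal{C}_o^k(X)$; thus the whole join $z\ast\mathcal{C}_o^1(X)$ lies in $\mathcal{C}_o^k(X)$. Each such line passes through $z$ and hence lies in the tangent cone, so the embedded tangent space $\mathbb{T}_z\mathcal{C}_o^k(X)$, being linear, contains every point of $\mathcal{C}_o^1(X)$. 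Since $\mathcal{C}_o^1(X)$ is the highest weight orbit of the irreducible isotropy representation it spans $\mathbb{P}T_o(X)\cong\mathbb{P}^{n-1}$, whence $\mathbb{T}_z\mathcal{C}_o^k(X)=\mathbb{P}^{n-1}$. As $\mathcal{C}_o^k(X)\subsetneq\mathbb{P}^{n-1}$ for $k\le r-1$ (Lemma \ref{secantrank}) we have $\dim\mathcal{C}_o^k(X)<n-1$, so $z$ is singular, giving $\mathcal{C}_o^{k-1}(X)\subseteq\mathrm{Sing}(\mathcal{C}_o^k(X))$.

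The reverse inclusion in (2) is entirely parallel, with minimal rational curves playing the role of lines. Fix any $z\in\mathcal{M}_{k-1}$. By the inductive definition $\mathcal{M}_k=\overline{\bigcup\{\text{mrcs through }\mathcal{M}_{k-1}\}}$, every minimal rational curve through $z$ lies in $\mathcal{M}_k$, and for each direction of the VMRT of $X$ at $z$ there is such a curve smooth at $z$ realizing that tangent. Since these curves lie in the tangent cone of $\mathcal{M}_k$ at $z$, the embedded tangent space $\mathbb{T}_z\mathcal{M}_k$ contains the span of the VMRT at $z$. By homogeneity of $X$ this VMRT is again a nondegenerate highest weight orbit spanning $\mathbb{P}T_z(X)$, so $\mathbb{T}_z\mathcal{M}_k=T_z(X)$ has dimension $n$. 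As $\mathcal{M}_k\subseteq D$ lies in a divisor, $\dim\mathcal{M}_k\le n-1<n$, so $z$ is singular; hence $\mathcal{M}_{k-1}\subseteq\mathrm{Sing}(\mathcal{M}_k)$. Combining with the forward inclusions yields both equalities.

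The forward inclusions are formal consequences of Proposition \ref{fiboverinfty}, so the substance lies in the reverse inclusions, whose engine in both cases is the nondegeneracy (spanning property) of the VMRT. The point demanding care is that the tangent-space jump must hold at \emph{every} point of the deeper stratum, not merely a generic one, so that the full closed stratum is captured without an auxiliary closure argument: in case (1) this rests on rank subadditivity (Lemma \ref{secantrank}) guaranteeing that the entire join $z\ast\mathcal{C}_o^1(X)$ remains in $\mathcal{C}_o^k(X)$ for all $z$, and in case (2) on the inductive definition, which places every minimal rational curve through $z$ inside $\mathcal{M}_k$. I expect the only genuinely load-bearing verifications to be the containment of these lines and curves in the tangent cone and the consequent identification of $\mathbb{T}_z$ with the full ambient space; these are routine but are exactly what separates the singular points from the smooth ones.
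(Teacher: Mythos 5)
Your proof is correct, but it takes a genuinely different route from the paper's, most markedly in part (2). For part (1) the paper anchors the base case $\mathcal{C}_o^1(X)\subset Sing(\mathcal{C}_o^2(X))$ in Zak's lemma \cite[Lemma 1.9 a)]{MR1234494} and then runs an induction on $k$ through the join structure $\mathcal{C}_o^{k_0+1}(X)=S(\mathcal{C}_o^{k_0}(X),\mathcal{C}_o^1(X))$, using $\hat{T}_\beta(\mathcal{C}_o^{k_0+1}(X))\supset \mathrm{span}\langle\hat{T}_{\alpha}(\mathcal{C}_o^{k_0}(X)),\hat{T}_{\gamma}(\mathcal{C}_o^1(X))\rangle$ to propagate the full embedded tangent space; you bypass the induction entirely by invoking rank subadditivity from Lemma \ref{secantrank} to see that the join $z\ast\mathcal{C}_o^1(X)$ already lies in $\mathcal{C}_o^k(X)$ for \emph{every} $z\in\mathcal{C}_o^{k-1}(X)$, which inlines the mechanism of Zak's lemma at arbitrary depth of the stratification in one step. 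For part (2) the divergence is larger: the paper reduces by homogeneity and induction to $x\in N_{r-k+2}$, invokes Corollary \ref{transversal} to produce a balanced subspace $W^x_{r-k+2}$ transversal to $\mathcal{M}_{k-1}$ at $x$, identifies the family of lines from $x$ to $N_{r-k+1}$ inside $\mathcal{M}_k$ as a cone over $\mathfrak{N}\cong\mathcal{C}_x^1(W^x_{r-k+2})$, and obtains $\dim T_x(\mathcal{M}_k)=\dim\mathcal{M}_{k-1}+\dim W^x_{r-k+2}=n$ by adding the transversal directions; you instead observe that the inductive definition of $\mathcal{M}_k$ puts \emph{every} minimal rational curve through $z\in\mathcal{M}_{k-1}$ inside $\mathcal{M}_k$, so $T_z(\mathcal{M}_k)$ contains the span of the affinized VMRT at $z$, which is all of $T_z(X)$ by irreducibility of the isotropy representation -- no transversality, no reduction to $N_{r-k+2}$, no case analysis. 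Your route is shorter and more uniform (both parts run on the single engine of VMRT nondegeneracy forcing a tangent-space jump above the local dimension, which is bounded by $n-2$ resp.\ $n-1$ since $\mathcal{C}_o^k(X)\subsetneq\mathbb{P}^{n-1}$ and $\mathcal{M}_k\subset D$); what the paper's heavier computation buys is the finer local structure along $\mathcal{M}_{k-1}$ -- the explicit cone over $\mathfrak{N}$ and the tangent decomposition along the transversal balanced subspace -- which the paper reuses afterwards in Propositions \ref{describnor} and \ref{stofcha}, information your argument does not produce. One cosmetic slip: the curves themselves do not ``lie in the tangent cone''; it is their tangent directions at $z$ that do, but since that is exactly what your span argument consumes, nothing breaks.
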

\begin{proof}
	\begin{enumerate}
		\item 
	 We know $\mathcal{C}_o^{k}(X)\backslash \mathcal{C}_o^{k-1}(X)$ is a Zariski open dense smooth $P$-orbit, so $Sing(\mathcal{C}_o^{k}(X))\subset \mathcal{C}_o^{k-1}(X)$. Also from Zak \cite[Lemma 1.9 a)]{MR1234494}, we have $\mathcal{C}_o^1(X)\subset Sing(\mathcal{C}_o^2(X))$ since $\mathcal{C}_o^1(X)$ is nondegenerate in $\mathbb{P}T_o(X)$. Hence $\mathcal{C}_o^1(X)=Sing(\mathcal{C}_o^2(X))$ and we are done with cases where $\text{rank}(X)\leq 3$.
	 One way to obtain the inclusion $Sing(\mathcal{C}_o^{k}(X))\subset \mathcal{C}_o^{k-1}(X)$ in general is to consider case by case and do the checking on radical ideals (for exampel in Grassmannian case given by the minors). 
	We give a uniform proof using projective geometry here. In fact we can show that $ \hat{T}_\alpha(\mathcal{C}_o^k(X))=\mathbb{P}^{n-1}$ for $\alpha\in \mathcal{C}_o^{k-1}(X)$ where $\hat{T}_\alpha(\mathcal{C}_o^k(X))$ denotes the embedded projective tangent space. This can be done by induction. We can see $Sing(\mathcal{C}_o^2(X))=\mathcal{C}_o^1(X)$ and  $ \hat{T}_\alpha(\mathcal{C}_o^2(X))=\mathbb{P}^{n-1}$ for $\alpha\in \mathcal{C}_o^1(X)$ from \cite[Lemma 1.9 a)]{MR1234494} as $\mathcal{C}_o^1(X)$ is nondegenerate. Now suppose that $ \hat{T}_\alpha(\mathcal{C}_o^{k_0}(X))=\mathbb{P}^{n-1}$ for $\alpha\in \mathcal{C}_o^{k_0-1}(X)$ and $Sing(\mathcal{C}_o^{k_0}(X))=\mathcal{C}_o^{k_0-1}(X)$. For $\beta\in \mathcal{C}_o^{k_0}(X)=S(\mathcal{C}_o^{k_0-1}(X), \mathcal{C}_o^1(X))$, choose $\alpha\in \mathcal{C}_o^{k_0-1}(X), \gamma\in \mathcal{C}_o^1(X)$ ($\gamma\neq \alpha$) such that $\beta\in <\alpha,\gamma>$. Then 
	$\hat{T}_\beta(\mathcal{C}_o^{k_0+1}(X))\supset span<\hat{T}_{\alpha}(\mathcal{C}_o^{k_0}(X)),\hat{T}_{\gamma}(\mathcal{C}_o^1(X))>$ and thus $\hat{T}_\beta(\mathcal{C}_o^{k_0+1}(X))=\mathbb{P}^{n-1}$.
	
	\item From Proposition \ref{fiboverinfty} we know $Sing(\mathcal{M}_k)\subset \mathcal{M}_{k-1}$. To obtain the other inclusion 
	we claim that $\dim T_x(\mathcal{M}_k)=\dim(X)=n$ if $x\in \mathcal{M}_{k-1}$. By simple induction it suffices to consider the case when $x\in \mathcal{M}_{k-1}\backslash \mathcal{M}_{k-2}$. By Proposition \ref{fiboverinfty} we may further assume that $x\in N_{r-k+2}$. From the construction of the stratification there exists a family of lines passing $x$ and $N_{r-k+1} $ contained in $\mathcal{M}_k$. This is a cone over a subvariety $\mathfrak{N}\subset N_{r-k+1}$. From Corollary \ref{transversal} we know there is a balanced subspace with rank $r-k+2$ passing $x$ which is transversal to $\mathcal{M}_{k-1}$ at $x$, and we use $W^x_{r-k+2}$ to denote it. Then we know  $\mathfrak{N}=N^{W_{r-k+2}}_{r-k+1}:=N_{r-k+1}\cap W^x_{r-k+2}\cong \mathcal{C}_x^1(W^x_{r-k+2})$ where $\mathcal{C}_x^1(W^x_{r-k+2})$ is the VMRT of 
	$W^x_{r-k+2}$. As the affinization of $\mathcal{C}_x^1(W_{r-k+2})$ spans $T_x(W_{r-k+2})$ and $W_{r-k+2}$ is transversal to $\mathcal{M}_{k-1}$ at $x$ we have $\dim T_x(\mathcal{M}_k)=\dim \mathcal{M}_{k-1}+\dim W_{r-k+2}=n$. Hence $Sing(\mathcal{M}_k)=\mathcal{M}_{k-1}$.

    \end{enumerate}
\end{proof}
Now the geometric picture is clear. $\mathcal{M}_k$ is the union of a family of Hermtian characteristic symmetric subspaces in $X$ parametrized by $N_{r-k+1}$, where those affine cells are disjoint in $\mathcal{M}_k\backslash \mathcal{M}_{k-1}$ and their compactifying divisors intersect inside $\mathcal{M}_{k-1}$. For example when $X=Q^n$, $D=\mathcal{M}_2$ is a cone over $N_1\cong Q^{n-2}$ (see Example \ref{examhyper}) with vertex in $\mathcal{M}_1\cong \{pt\}$. In this case a Hermitian characterisc symmetric subspace of rank one is just a minimal rational curve  in $Q^n$ and we can see that $\mathcal{M}_2$ is a family of $\mathbb{P}^1$'s where the affine cells ($\cong\mathbb{C}$) are disjoint in $\mathcal{M}_2\backslash \mathcal{M}_1$ and their compactifying divisors ($\cong \{pt\}$) intersects as the vertex.

Next we will prove that the successive blow-ups actually smooth the stratifications.
First we recall the Blow-up Closure Lemma.

\begin{lemma}[Blow-up Closure Lemma]\label{blowclosure}
	Suppose that $Y\hookrightarrow X$ is a closed subscheme corresponding to a finite type quasicoherent sheaf of ideals and $Z\hookrightarrow X$ is a closed subscheme. If we blow up $X$ along $Y$, then the strict transform of $Z$ is canonically isomorphic to $\text{Bl}_{Z\cap Y}Z$.
\end{lemma}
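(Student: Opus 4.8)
The plan is to prove the lemma via the universal property of blowing up, reducing the scheme-theoretic bookkeeping to an affine computation with Rees algebras. Recall that $b:X'=\text{Bl}_Y X\to X$ is terminal among $X$-schemes $T\to X$ for which the inverse image ideal sheaf $\mathcal{I}_Y\cdot\mathcal{O}_T$ is invertible. First I would observe that blowing up $Z$ along $Z\cap Y$ makes the ideal $\mathcal{I}_{Z\cap Y}=\mathcal{I}_Y\cdot\mathcal{O}_Z$ invertible; hence the composite $g:\text{Bl}_{Z\cap Y}Z\to Z\hookrightarrow X$ pulls $\mathcal{I}_Y$ back to an invertible sheaf, and the universal property yields a unique factorization $g=b\circ j$ with $j:\text{Bl}_{Z\cap Y}Z\to X'$. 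This $j$ is the canonical morphism whose image we must identify, and its uniqueness is exactly what makes the resulting isomorphism canonical.

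Next I would check that $j$ is a closed immersion by working locally. Writing $X=\text{Spec}\,A$, $Y=V(I)$ and $Z=\text{Spec}(A/J)$, the subscheme $Z\cap Y$ of $Z$ is cut out by the image $\bar I=(I+J)/J$ of $I$ in $A/J$, and one has $X'=\text{Proj}\bigoplus_{d\ge0}I^d$ together with $\text{Bl}_{Z\cap Y}Z=\text{Proj}\bigoplus_{d\ge0}\bar I^{\,d}$. Since $(I+J)^d+J=I^d+J$, the reduction map $A\to A/J$ induces a degree-wise surjective homomorphism of graded rings $\bigoplus_d I^d\twoheadrightarrow\bigoplus_d\bar I^{\,d}$, both generated in degree one over degree zero. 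A surjection of such graded rings induces a closed immersion on $\text{Proj}$; this realizes $j$ as a closed immersion locally, hence globally, and the compatibility with $b$ shows it agrees with the factorization from the universal property.

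Finally I would identify the image of $j$ with the strict transform $\tilde Z$, i.e. with the scheme-theoretic closure of $b^{-1}(Z\setminus Y)$ in $X'$. Over $X\setminus Y$ the map $b$ is an isomorphism and $\text{Bl}_{Z\cap Y}Z\to Z$ is an isomorphism over $Z\setminus(Z\cap Y)$, so $j$ restricts to an isomorphism onto $b^{-1}(Z\setminus Y)$; in particular the two subschemes agree away from the exceptional divisor, whence $\tilde Z\subseteq j(\text{Bl}_{Z\cap Y}Z)$. The crux is to upgrade this to a scheme-theoretic equality. Here I would use that the exceptional divisor $E_Z$ on $\text{Bl}_{Z\cap Y}Z$ is an effective Cartier divisor: a local equation for $E_Z$ is a nonzerodivisor, so $\text{Bl}_{Z\cap Y}Z$ has no associated points contained in $E_Z$. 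A scheme with no associated points on $E_Z$ coincides with the scheme-theoretic closure of its complement $\text{Bl}_{Z\cap Y}Z\setminus E_Z\cong Z\setminus(Z\cap Y)$, and transporting this along the closed immersion $j$ gives precisely the defining closure of $\tilde Z$. The main obstacle is exactly this last identification of \emph{scheme} structures rather than mere underlying sets: the comparison is transparent over $X\setminus Y$, and the whole content is to exclude spurious components or embedded points of $\text{Bl}_{Z\cap Y}Z$ lying over $Y$, which the effective-Cartier-divisor argument rules out.
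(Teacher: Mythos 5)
The paper does not actually prove this lemma: it is recalled as a standard fact (it is the ``Blow-up Closure Lemma'' of Vakil's notes, essentially Hartshorne II.7.15), so there is no in-paper proof to compare against. Your argument is correct and is the standard one: the universal property produces the canonical map $j:\text{Bl}_{Z\cap Y}Z\to \text{Bl}_YX$; the affine Rees-algebra computation, resting on the identity $(I+J)^d+J=I^d+J$ so that $\bigoplus_d I^d\twoheadrightarrow\bigoplus_d \bar I^{\,d}$ is a degreewise surjection of graded rings generated in degree one, shows $j$ is a closed immersion (and uniqueness in the universal property glues the local immersions consistently); and the identification of the image with the strict transform is handled correctly. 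In particular you isolate the one genuinely delicate point --- equality of scheme structures, not just of sets --- and your resolution is sound: the exceptional ideal $\mathcal{I}_{Z\cap Y}\cdot\mathcal{O}_{\text{Bl}_{Z\cap Y}Z}$ is invertible, so a local equation $t$ of $E_Z$ is a nonzerodivisor, whence $A\to A_t$ is injective on each chart and $\text{Bl}_{Z\cap Y}Z$ has no associated points on $E_Z$; therefore it equals the scheme-theoretic closure of its complement, which transports along the closed immersion $j$ to the defining closure of $b^{-1}(Z\setminus Y)$. One cosmetic remark: the no-associated-points phrasing silently assumes a locally Noetherian setting, but your own localization argument ($t$ a nonzerodivisor implies $\mathcal{O}\to j_*\mathcal{O}_{W\setminus E}$ injective) already gives the conclusion without that hypothesis, which is all the paper's statement (finite type quasicoherent ideal) requires.
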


Also we recall the following lemma, see for example \cite[Chapter III, Corollary  1.4.5]{MR954831}.

\begin{lemma}\label{excdivnor}
	Suppose that $Y\hookrightarrow X$ is a closed subscheme corresponding to a finite type quasicoherent sheaf of ideals. If we blow up $X$ along $Y$, the exceptional divisor is the projectivized normal cone of $Y$ in $X$.
	\end{lemma}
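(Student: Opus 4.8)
The plan is to prove this by unwinding the definition of the blow-up as a relative $\mathrm{Proj}$ and comparing it directly with the definition of the projectivized normal cone. The statement is local on $X$, so I would first pass to an affine chart $U=\mathrm{Spec}\,A\subset X$ on which the ideal sheaf $\mathcal{I}$ of $Y$ is given by an ideal $I\subset A$, carry out the computation there, and then check that the resulting isomorphisms glue. On such a chart the blow-up is $\mathrm{Bl}_YX|_U=\mathrm{Proj}\,\mathcal{R}$, where $\mathcal{R}=\bigoplus_{n\ge 0}I^n$ is the Rees algebra (with $I^0=A$) and $\sigma\colon\mathrm{Proj}\,\mathcal{R}\to\mathrm{Spec}\,A$ is the structure morphism.

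Next I would identify the exceptional divisor as a scheme. By definition $E=\sigma^{-1}(Y)$ is the scheme-theoretic preimage, i.e. the Cartier divisor cut out by the invertible ideal $\mathcal{I}\cdot\mathcal{O}_{\mathrm{Bl}_YX}$, which is exactly the fibre product $\mathrm{Bl}_YX\times_X Y$. Over $U$ this base change along the closed immersion $\mathrm{Spec}(A/I)\hookrightarrow\mathrm{Spec}\,A$ is computed by $E|_U=\mathrm{Proj}\big(\mathcal{R}\otimes_A A/I\big)=\mathrm{Proj}\big(\mathcal{R}/I\mathcal{R}\big)$, using that relative $\mathrm{Proj}$ commutes with base change.

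The key algebraic identification is then
\[
\mathcal{R}\otimes_A A/I=\bigoplus_{n\ge 0}\big(I^n\otimes_A A/I\big)=\bigoplus_{n\ge 0} I^n/I^{n+1}=\mathrm{gr}_I A,
\]
the associated graded ring of the $I$-adic filtration, where the middle equality uses $I\cdot I^n=I^{n+1}$. Since by definition the normal cone is $C_{Y/X}=\mathrm{Spec}_Y\big(\bigoplus_n\mathcal{I}^n/\mathcal{I}^{n+1}\big)$ and its projectivization is the relative $\mathrm{Proj}$ of the same sheaf of graded $\mathcal{O}_Y$-algebras, the computation above yields $E|_U=\mathrm{Proj}(\mathrm{gr}_I A)=\mathbb{P}(C_{Y/X})|_U$. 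These isomorphisms are canonical, being built only from the $\mathcal{I}$-adic filtration on $\mathcal{O}_X$, so they are independent of the chosen chart and glue to a global isomorphism $E\cong\mathbb{P}(C_{Y/X})$.

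The one point to verify with care—the rest being formal bookkeeping—is the compatibility of relative $\mathrm{Proj}$ with the base change along $Y\hookrightarrow X$, which is precisely where the finite-type and quasi-coherence hypotheses on $\mathcal{I}$ enter (they guarantee that $\mathcal{R}$ is a well-behaved sheaf of graded algebras and that the relative $\mathrm{Proj}$ is defined). Once $E=\mathrm{Proj}(\mathcal{R}/\mathcal{I}\mathcal{R})$ is established, the identification of $\mathcal{R}/\mathcal{I}\mathcal{R}$ with $\mathrm{gr}_{\mathcal{I}}\mathcal{O}_X$, and hence of $E$ with the projectivized normal cone, is immediate from the definitions.
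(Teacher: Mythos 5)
Your proof is correct, and in fact the paper does not prove this lemma at all: it recalls it as a standard fact with a citation (\cite[Chapter III, Corollary 1.4.5]{MR954831}), so there is no in-paper argument to compare against. Your computation is exactly the standard textbook proof that such references give: write the blow-up as $\mathrm{Proj}$ of the Rees algebra $\mathcal{R}=\bigoplus_{n\geq 0}\mathcal{I}^n$, identify the exceptional divisor with the scheme-theoretic fibre product $\mathrm{Bl}_YX\times_X Y$, use compatibility of relative $\mathrm{Proj}$ with base change, and conclude from the canonical isomorphism $I^n\otimes_A A/I\cong I^n/I^{n+1}$ (right-exactness of tensor plus $I\cdot I^n=I^{n+1}$) that $E\cong \mathrm{Proj}\bigl(\mathrm{gr}_I A\bigr)$, which is the projectivized normal cone by definition; you also correctly isolate the base-change step as the only point requiring care, so the argument is complete.
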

 For simplicity we will use $NorCone(\mathcal{A},\mathcal{B})$ to denote the projectivized normal cone of $\mathcal{A}\subset \mathcal{B}$ and $NorCone(\mathcal{A},\mathcal{B})_x$ to denote the fiber at $x\in \mathcal{A}$  where $\mathcal{A}, \mathcal{B}$ are projective varieties. One can find the formal definition of normal cone and projectivized normal cone in \cite[pp.571-572]{MR954831} in terms of commutative algebra. When $\mathcal{A}$ is a point $x\in \mathcal{B}$, the projectivized normal cone is called the projectivized tangent cone of $\mathcal{B}$ at $x$, which is denoted by $TanCone(\mathcal{B},x)$. We given a geometric description of $NorCone(\mathcal{A},\mathcal{B})_x$ here when $x$ is in the smooth locus of $\mathcal{A}$: the projectivized normal cone at $x$ is the projectivization of the union of tangents to holomorphic arcs $\gamma: \Delta\rightarrow \mathcal{B}$ centered about $x$ modulo  $T_x(\mathcal{A})$. If we embed $\mathcal{B}$ into $\mathbb{P}^n$, then $NorCone(\mathcal{A},\mathcal{B})_x$ is a subvariety in $\mathbb{P}(N_{\mathcal{A}|\mathbb{P}^n,x})$.
 
 We can prove the following proposition.
 \begin{proposition}\label{singularityafterblowup}
 	Let $X^n$ be a singular projective variety with $Y^m\subset Z:=Sing(X^n)$ where $Y^m$ is a smooth variety. Let $NorCone(Y,X)\rightarrow Y$ be the  projectivized normal cone of $Y\subset X$. Let $\pi: Bl_YX \rightarrow X$ be the blow-up of $X$ along $Y$ and $\widetilde{Z}$ is the strict transform of $Z$. Then $Sing(Bl_YX)\subset  \widetilde{Z}\cup \bigcup_{y\in Y}Sing(NorCone(Y,X)_y)$ if we identify the exceptional divisor with $NorCone(Y,X)$. In particular, if for each $y\in Y$, $Sing(NorCone(Y,X)_y)=NorCone(Y,Z)_y$, then $Sing(Bl_YX)= \widetilde{Z}$.
 \end{proposition}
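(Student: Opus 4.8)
The plan is to pass to an \'etale-local neighbourhood of a point $p$ of the exceptional divisor, reduce the smoothness of $Bl_YX$ at $p$ to the regularity of the exceptional Cartier divisor, and extract the latter from the fibre of the projectivized normal cone together with the smoothness of $Y$. Write $\widetilde X:=Bl_YX$, let $\rho:\widetilde X\to X$ be the blow-up with exceptional divisor $E$, and let $\phi:E\to Y$ be the projection; by Lemma \ref{excdivnor} we identify $E$ with $NorCone(Y,X)$ and $\phi^{-1}(y)$ with the fibre $NorCone(Y,X)_y$, which I abbreviate $F_y$. Treating $X$ as integral (otherwise argue component by component), $\widetilde X$ is integral of dimension $n$ and $E$ is an effective Cartier divisor of pure dimension $n-1$, with local equation $t$ a nonzerodivisor on $\widetilde X$.

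\textbf{Away from $E$.} Since $\rho$ is an isomorphism $\widetilde X\setminus E\xrightarrow{\sim}X\setminus Y$ and $Y\subset Z$, it carries $Sing(\widetilde X)\setminus E$ isomorphically onto $Sing(X)\setminus Y=Z\setminus Y$, which is exactly $\widetilde Z\setminus E$ and is dense in $\widetilde Z$. Hence $Sing(\widetilde X)\setminus E=\widetilde Z\setminus E\subset\widetilde Z$; since $Sing(\widetilde X)$ is closed, taking closures also yields $\widetilde Z\subset Sing(\widetilde X)$, which is the reverse inclusion needed for the final equality.

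\textbf{On $E$.} This is the heart. Because $Y$ is smooth, $\mathfrak m_y\subset\mathcal O_{Y,y}$ is generated by $m=\dim Y$ elements, so $\mathcal O_{F_y,p}=\mathcal O_{E,p}/\mathfrak m_y\mathcal O_{E,p}$ is a quotient of $\mathcal O_{E,p}$ by $m$ elements; consequently the embedding dimension of $E$ at $p$ exceeds that of $F_y$ at $p$ by at most $m$. As $E$ is a Cartier divisor we always have $\dim\mathcal O_{E,p}=n-1$, while $\phi$ is surjective with $\dim E=n-1$, so the generic fibre dimension is $n-m-1$. Thus, \emph{provided} $\dim_pF_y=n-m-1$, smoothness of $F_y$ at $p$ forces embedding dimension $\le(n-m-1)+m=n-1=\dim\mathcal O_{E,p}$, i.e. $\mathcal O_{E,p}$ is regular; since $t$ is a nonzerodivisor and $\mathcal O_{\widetilde X,p}/(t)=\mathcal O_{E,p}$ is regular of dimension $n-1$, the local ring $\mathcal O_{\widetilde X,p}$ is regular of dimension $n$ (\cite{MR954831}), so $Bl_YX$ is smooth at $p$. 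This already proves $Sing(Bl_YX)\cap E\subset\widetilde Z\cup\bigcup_y Sing(F_y)$ over the dense locus of $Y$ where $\phi$ is equidimensional.

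\textbf{The obstacle, and the conclusion.} The gap is the proper closed locus $B\subset Y$ where the fibre dimension jumps above $n-m-1$ (equivalently where the Hilbert--Samuel multiplicity of $X$ along $Y$ jumps); there the inequality no longer forces regularity, and $\widetilde X$ may be singular at a point $p$ with $F_y$ smooth. The hard part will be to show that such $p$ still lie in $\widetilde Z$. I would invoke the dichotomy that every component of $Sing(\widetilde X)$ either meets $\widetilde X\setminus E$---hence lies in $\overline{Z\setminus Y}=\widetilde Z$ by the first step---or is contained in $E$; for a component $C\subset E$ not contained in $\bigcup_y Sing(F_y)$ one must identify its generic direction with a point of $NorCone(Y,Z)$, i.e. with a tangent direction of $Z=Sing(X)$ along $Y$. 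Matching the fibre-jump directions with the normal cone $C_YZ$---through upper semicontinuity of multiplicity and the relation between normal flatness of $X$ along $Y$ and the tangent cone of $Sing(X)$---is the principal technical obstacle. Finally, by the Blow-up Closure Lemma \ref{blowclosure} together with Lemma \ref{excdivnor}, $\widetilde Z\cong Bl_YZ$ has exceptional divisor $\widetilde Z\cap E=NorCone(Y,Z)$, whose fibre over $y$ is $NorCone(Y,Z)_y$; hence the hypothesis $Sing(NorCone(Y,X)_y)=NorCone(Y,Z)_y$ rewrites $\bigcup_y Sing(F_y)=\widetilde Z\cap E\subset\widetilde Z$, collapsing the general inclusion to $Sing(Bl_YX)\subset\widetilde Z$, and the reverse inclusion from the first step gives equality. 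In the intended application this obstacle is vacuous, since there $\phi:E\to Y$ is a (homogeneous) projective-space bundle as in Proposition \ref{fiboverinfty}, so $B=\emptyset$ and the equidimensional step already suffices.
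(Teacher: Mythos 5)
Your core mechanism is sound and, where it applies, is a cleaner, intrinsic version of what the paper does by bare hands. The paper's proof is a local coordinate computation: it writes $X=V(f_1,\dots,f_e)\subset \mathbb{C}^N$ with $Y=\{y_1=\dots=y_d=0\}$, expands each $f_s=f_{s,k_s}+f_{s,k_s+1}+\cdots$ by order in the normal variables, computes the strict transform in the chart $w_1=1$ as $\widetilde{f_s}=f_{s,k_s}(x,1,w)+y_1f_{s,k_s+1}(x,1,w)+\cdots$, and observes that $d\widetilde{f_s}|_{\{y_1=0\}}=df_{s,k_s}+f_{s,k_s+1}\,dy_1$, so that wherever the leading forms have Jacobian rank $N-n$ in the $w$-variables the strict transform is smooth. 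You replace this with the embedding-dimension count $\operatorname{embdim}_p E\le \operatorname{embdim}_p F_y+m$ together with the standard lifting along the Cartier divisor ($\mathcal{O}_{\widetilde{X},p}/(t)$ regular, $t$ a nonzerodivisor, hence $\mathcal{O}_{\widetilde{X},p}$ regular), and you also make explicit the reverse inclusion $\widetilde{Z}\subset Sing(Bl_YX)$ needed for the final equality, which the paper leaves implicit.

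That said, as you acknowledge yourself, your argument establishes $Sing(Bl_YX)\cap E\subset \widetilde{Z}\cup\bigcup_y Sing(NorCone(Y,X)_y)$ only at points where $\dim_pF_y=n-m-1$: over the jumping locus the count gives embedding dimension at most $n-1$ plus the jump and proves nothing, and your sketched remedy (matching jump directions with $NorCone(Y,Z)$ via semicontinuity of multiplicity) is not carried out, so as a proof of the proposition as stated there is a genuine gap. It is fair to note, however, that the paper's proof is exposed at exactly the same point: its assertion that $\{d_{(w_2,\dots,w_d)}f_{s,k_s}\}$ has rank $N-n$ on the smooth locus of $NorCone(Y,X)_y$ forces that fiber to have local codimension $N-n$ in $\mathbb{C}^{d-1}$, i.e.\ dimension $n-m-1$, so the paper silently assumes precisely the equidimensionality (and that the initial forms of the chosen generators cut out the normal cone) that you flagged as the obstacle. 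In the intended application the gap is vacuous, as you observe, since by Propositions \ref{fiboverinfty} and \ref{describnor} the relevant normal cones fiber homogeneously over the centers with equidimensional secant-variety fibers --- though note these are secant varieties sitting inside a projective bundle, not a projective-space bundle as you wrote.
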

 
\begin{proof}
		It suffices to consider it locally. As $Y^m$ is smooth, by choosing suitable coordinate we may assume that $X^n$ is a defined by the polynomials $(f_1,\cdots ,f_e)$ in $\mathbb{C}^N=\{(x_1,\cdots, x_m, y_1, \cdots,y_d)\}$ and $Y^m=\{(x_1,\cdots, x_m, 0, \cdots,0)\}$. We can write down $\pi:Bl_{Y}\mathbb{C}^N\rightarrow \mathbb{C}^N$ and then the strict transform of $\widetilde{X}\rightarrow X$ in coordinates. First of all, we know
	\[Bl_{Y}\mathbb{C}^N=\{(x_1,\cdots, x_m, y_1,\cdots, y_d)\times [w_1,\cdots, w_d]: y_iw_j=y_jw_i\} \subset \mathbb{C}^N\times \mathbb{P}^{d-1}     \]
	For $f_s(1\leq s\leq e)$, we can write it as 
	\[f_s=f_{s,k_s}+f_{s,k_s+1}+\cdots\]where $f_{s,k_s}$ is a polynomial in $(x_1,\cdots, x_m, y_1, \cdots,y_d)$  whose order in $(y_1, \cdots, y_d)$ is $k_s$. Then in $\{w_1=1\}$ coordinate, we have $y_j=y_1w_j$ and $\pi^{-1}(X)$ is given by the following polynomials $(1\leq s\leq e)$
	\[f'_s=y^{k_s}_1f_{s,k_s}(x_1,\cdots, x_m,1,w_2,\cdots, w_d)+y_1^{k_s+1}f_{s,k_s+1}(x_1,\cdots, x_m,1,w_2,\cdots, w_d)+\cdots\] 
	and then $\widetilde{X}$ is given by the following polynomials $(1\leq s\leq e)$
	\[\widetilde{f_s}=f_{s,k_s}(x_1,\cdots, x_m, 1, w_2,\cdots, w_d)+y_1f_{s,k_s+1}(x_1,\cdots, x_m, 1, w_2,\cdots, w_d)+\cdots\] 
	and the exceptional divisor of blowing up $X$ along $Y$ is \[E=\{f_{s,k_s}(x_1,\cdots, x_m, 1, w_2,\cdots, w_d)=0 (1\leq s\leq e)\}.\] 
	We know the exceptional divisor $E$ is identified with $NorCone(Y,X)$. If we consider the fiber of $NorCone(Y,X)$ on $y=(x^0_1,\cdots, x^0_m)\in Y$, it is given by 
	\[NorCone(Y,X)_y=\{f_{s,k_s}(x^0_1,\cdots, x^0_m, 1, w_2,\cdots, w_d)=0 (1\leq s\leq e)\} \subset \mathbb{C}^{d-1}\] 
	On the smooth locus of $NorCone(Y,X)_y$, we know $\{d_{(w_2,\cdots, w_d)}f_{s,k_s}(x_1,\cdots, x_m,1,w_2,\cdots, w_d)\}_{1\leq s\leq e}$ is of rank $N-n$. On the other hand we know 
	\[d\tilde{f_s}|_{\{y_1=0\}}=df_{s,k_s}(x_1,\cdots, x_m, 1, w_2,\cdots, w_d)+f_{s,k_s+1}(x_1,\cdots, x_m, 1, w_2,\cdots, w_d)dy_1\] 
	Then we can see $\{d\widetilde{f_s}\}_{1\leq s\leq e}$ over $sm(NorCone(Y,X)_y)$ is of rank $N-n$ as well. Thus $Sing(Bl_YX)\subset  \widetilde{Z}\cup \bigcup_{y\in Y}Sing(NorCone(Y,X)_y)$.
\end{proof}
 
Next we consider the fiber of the projectivized normal cone $NorCone(\mathcal{M}_j, \mathcal{M}_k), NorCone(\mathcal{C}_o^j(X)), \mathcal{C}_o^k(X)$ over the smooth locus of $\mathcal{M}_j, \mathcal{C}_o^j(X)$ respectively. We have
\begin{proposition}\label{describnor}
 \begin{enumerate}
 	\item Let $y\in sm(\mathcal{M}_{j})=\mathcal{M}_j\backslash \mathcal{M}_{j-1}$ be a smooth point in $\mathcal{M}_j$. Then $NorCone(\mathcal{M}_j, X)_{y}$ can be identified with the projectivized tangent space of a balanced subspace $W^y_{r-j+1}$ of rank $r-j+1$ passing $y$ and $NorCone(\mathcal{M}_j, \mathcal{M}_k)_{y}\cong \mathcal{C}_y^{k-j+1}(W^y_{r-j+1})$. Moreover we have \[(NorCone(\mathcal{M}_j, \mathcal{M}_k)_{y}\subset  NorCone(\mathcal{M}_j, X)_{y})\cong (\mathcal{C}_y^{k-j+1}(W^y_{r-j+1})\subset \mathbb{P}T_y(W^y_{r-j+1}))\] where the isomorphism means that the embedding is the same.
 	\item Let  $v\in sm(\mathcal{C}_o^{j}(X))=\mathcal{C}
 	_o^j(X)\backslash \mathcal{C}_o^{j-1}(X)$ be a smooth point in $\mathcal{C}_o^j(X)$. Then $NorCone(\mathcal{C}_o^j(X), \mathbb{P}T_o(X))_v$ can be identified with the projectivized tangent space of a Hermitian characteristic symmetric subspace $Z^o_{r-j}$ passing $o$ of rank $r-j$ and $NorCone(\mathcal{C}_o^j(X), \mathcal{C}_o^k(X)_{v}\cong \mathcal{C}_o^{k-j}(Z^o_{r-j})$.
 	Moreover we have \[(NorCone(\mathcal{C}_o^j(X), \mathcal{C}_o^k(X)_{v}\subset  NorCone(\mathcal{C}_o^j(X), \mathbb{P}T_o(X))_v)\cong (\mathcal{C}_o^{k-j}(Z^o_{r-j})\subset \mathbb{P}T_o(Z^o_{r-j}))\] where the isomorphism means that the embedding is the same.
 \end{enumerate}
\end{proposition}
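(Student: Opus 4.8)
The plan is to reduce each fibre of the projectivized normal cone to the projectivized tangent cone of a \emph{transversal slice}, and then to recognize that slice-intersection as an intrinsic chains-of-minimal-rational-curves locus whose tangent cone is governed by the rank stratification of Lemma \ref{secantrank}. The two parts are dual: for (1) the slice is the balanced subspace furnished by Corollary \ref{transversal}, while for (2) the slice is the characteristic symmetric subspace and the ambient $\mathbb{P}T_o(X)$ is already smooth. In both cases the geometric description of $NorCone(\mathcal{A},\mathcal{B})_x$ recalled before Proposition \ref{singularityafterblowup} (tangents to arcs in $\mathcal{B}$ through $x$, taken modulo $T_x(\mathcal{A})$) is the computational engine.

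For part (1) I would first prove the ambient identification $NorCone(\mathcal{M}_j, X)_y \cong \mathbb{P}T_y(W^y_{r-j+1})$. Since $X$ is smooth and $y$ is a smooth point of $\mathcal{M}_j$, this normal cone is the full projectivized normal space $\mathbb{P}(N_{\mathcal{M}_j|X,y})$, a linear $\mathbb{P}$. By Corollary \ref{transversal} there is a balanced subspace $W=W^y_{r-j+1}$ transversal to $\mathcal{M}_j$ at $y$; reducing to $y\in N_{r-j+1}$ and invoking the root-space decomposition displayed in the proof of Corollary \ref{transversal}, the three tangent spaces $T_y(Z^y_{j-1})$, $T_y(W)$, $T_y(N_{r-j+1})$ partition the positive noncompact root vectors, so $T_y(X)=T_y(\mathcal{M}_j)\oplus T_y(W)$ is a direct sum of complementary dimension and the induced map $\mathbb{P}T_y(W)\to NorCone(\mathcal{M}_j,X)_y$ is an isomorphism. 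Because $W$ is a complementary-dimensional transversal slice, every arc direction of $\mathcal{M}_k$ transverse to $\mathcal{M}_j$ is realized inside $W$, whence $NorCone(\mathcal{M}_j,\mathcal{M}_k)_y=TanCone(\mathcal{M}_k\cap W,y)$ inside $\mathbb{P}T_y(W)$. It remains to identify $\mathcal{M}_k\cap W$ intrinsically: writing $y=c_{\Pi_{r-j+1}}.o$ and $W=c_{\Pi_{r-j+1}}W^o_{r-j+1}$, in the polysphere $\mathcal{P}$ the point $y$ is the corner whose first $r-j+1$ coordinates equal $\infty$, so in slice coordinates centred at $y$ the stratum $\mathcal{M}_k\cap W\cap\mathcal{P}$ is cut out by a vanishing-coordinate count and is exhibited as the chains-of-minimal-rational-curves locus of $W$ in the sense of Proposition \ref{tubefib}. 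By Proposition \ref{tubefib} that locus is swept by balanced subspaces through $y$, so its projectivized tangent cone is the set of tangent vectors of $W$ of bounded rank, i.e. the secant variety $\mathcal{C}_y^{\bullet}(W^y_{r-j+1})$ of the statement by Lemma \ref{secantrank} applied to $W$; the precise order is read off the coordinate count. The \emph{same-embedding} claim is then automatic from functoriality of normal cones under the closed embedding $\mathcal{M}_k\hookrightarrow X$, which places $NorCone(\mathcal{M}_j,\mathcal{M}_k)_y$ inside $NorCone(\mathcal{M}_j,X)_y$ compatibly with $\mathcal{C}_y^{\bullet}(W)\subset\mathbb{P}T_y(W)$.

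For part (2) the scheme is parallel, but now $\mathbb{P}T_o(X)\cong\mathbb{P}^{n-1}$ is smooth, so $NorCone(\mathcal{C}_o^j(X),\mathbb{P}T_o(X))_v$ is again a linear projectivized normal space; I would identify it with $\mathbb{P}T_o(Z^o_{r-j})$ via the root computation of Propositions \ref{chactertangent} and \ref{inftypart}, which show that the characteristic symmetric subspace $Z^o_{r-j}$ is a complementary-dimensional transversal slice to $\mathcal{C}_o^j(X)$ at the rank-$j$ point $v$. Then Lemma \ref{secantrank} presents $\mathcal{C}_o^k(X)$ as the determinantal locus of tangent vectors of rank $\le k$, and the transversal slice exhibits the classical local product structure of a determinantal variety at a rank-$j$ point: by Proposition \ref{subvmrt} the slice-intersection $\mathcal{C}_o^k(X)\cap\mathbb{P}T_o(Z^o_{r-j})$ is the secant variety $\mathcal{C}_o^{k-j}(Z^o_{r-j})$ of the VMRT of the slice, and its tangent cone at $v$ supplies $NorCone(\mathcal{C}_o^j(X),\mathcal{C}_o^k(X))_v$, once more with matching embedding by functoriality.

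The step I expect to be the main obstacle is the transversal-slice compatibility in part (1) together with the intrinsic identification of the sliced stratum. Concretely, the delicate points are verifying rigorously that $NorCone(\mathcal{M}_j,\mathcal{M}_k)_y$ equals the tangent cone of the slice — i.e. that no transverse arc direction in $\mathcal{M}_k$ escapes $W$, which rests on the complementary-dimension direct sum above — and carrying the $c_{\Pi_{r-j+1}}$-twist faithfully through the polysphere bookkeeping so that $\mathcal{M}_k\cap W$ is recognized as the chains-of-minimal-rational-curves locus of $W$ of the correct order, the order being precisely what is pinned down by the vanishing-coordinate count and hence which secant variety $\mathcal{C}_y^{\bullet}(W)$ appears. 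The determinantal slicing in part (2) is comparatively standard once Proposition \ref{subvmrt} is available, so the real weight of the argument lies in the slice geometry of part (1).
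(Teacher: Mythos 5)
Your part (2) contains a concrete error at its decisive step. You invoke Proposition \ref{subvmrt} to claim that the slice-intersection $\mathcal{C}_o^k(X)\cap\mathbb{P}T_o(Z^o_{r-j})$ equals $\mathcal{C}_o^{k-j}(Z^o_{r-j})$; but Proposition \ref{subvmrt} says exactly that this intersection is $\mathcal{C}_o^{k}(Z^o_{r-j})$ --- the index does not drop. Worse, this is the wrong slice: the point $v=[e_{\alpha_1}+\cdots+e_{\alpha_j}]$ does not lie in $\mathbb{P}T_o(Z^o_{r-j})$ at all, so the linear section $\mathcal{C}_o^k(X)\cap\mathbb{P}T_o(Z^o_{r-j})$ is not a transversal slice of $\mathcal{C}_o^k(X)$ through $v$ and carries no information about the normal cone at $v$. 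What is needed, and what the paper actually proves, is a statement about the affine slice through $v$: for $v'\in T_o(Z^o_{r-j})$ one has $[e_{\alpha_1}+\cdots+e_{\alpha_j}+v']\in\mathcal{C}_o^k(X)$ if and only if $[v']\in\mathcal{C}_o^{k-j}(Z^o_{r-j})$. This follows from rank additivity under strong orthogonality: applying Proposition-Definition \ref{HSS-normal} inside $Z^o_{r-j}$, the vector $v'$ is conjugate to $\sum_{i>j}a_ie_{\alpha_i}$, whence $\operatorname{rank}(v+v')=j+\operatorname{rank}(v')$, and one concludes with Lemma \ref{secantrank}. The index drop from $k$ to $k-j$ is precisely the content that Proposition \ref{subvmrt} cannot deliver; your allusion to the local product structure of determinantal varieties at a rank-$j$ point is the correct intuition, but the proof must run through the normal form, as the paper does (the paper also identifies the normal space with $T_o(Z^o_{r-j})$ by computing $\hat{T}_v(\mathcal{C}_o^j(X)\backslash\mathcal{C}_o^{j-1}(X))$ from Proposition \ref{fiboverinfty}(1) and the Restricted Root Theorem, consistent with your first half).

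Your part (1) follows the same route as the paper --- transversal balanced slice via Corollary \ref{transversal}, then the tangent cone of the sliced stratum --- but with two soft spots. First, the paper does not re-derive the tangent-cone identity: it cites Lemma \ref{tangconihss} (from \cite{MR2155089}), asserting $TanCone(\mathcal{V}^{W}_{s},y)=\mathcal{C}^s_y(W)$. Your substitute argument, that the chains locus is swept by balanced subspaces through $y$ so its tangent cone is the bounded-rank locus, yields only the inclusion $\mathcal{C}^s_y(W)\subset TanCone(\mathcal{V}^W_s,y)$: tangent cones record tangents to all arcs, not merely arcs inside members of the sweeping family, so equality requires the dimension count $\dim TanCone(\mathcal{V}^W_s,y)=\dim\mathcal{V}^W_s-1=\dim\mathcal{C}^s_y(W)$ together with an irreducibility argument --- that is, the content of Lemma \ref{tangconihss}. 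Second, the equality $NorCone(\mathcal{M}_j,\mathcal{M}_k)_y=TanCone(\mathcal{M}_k\cap W,y)$ does not follow from transversality and complementary dimension alone, since normal-cone fibers can jump along non-equisingular loci; it is legitimate here because $(\mathcal{M}_k,\mathcal{M}_j)$ is locally a product along the $K^{\mathbb{C}}$-homogeneous stratum $\mathcal{M}_j\backslash\mathcal{M}_{j-1}$, an equivariance of exactly the kind the paper exploits in the proof of Proposition \ref{stofcha}; you rightly flag this as the main obstacle, but the direct-sum decomposition of tangent spaces is not by itself the verification. Finally, a caution on ``reading the order off the coordinate count'': from the all-infinity corner $y$, membership of a polysphere point of $W$ in $\mathcal{M}_k$ means at least $r-k+1$ infinite coordinates, i.e. at most $k-j$ finite ones, so the count gives $\mathcal{M}_k\cap W=\mathcal{V}^{W}_{k-j}$ and hence exponent $k-j$; check this against the hyperquadric, where $NorCone(\mathcal{M}_1,D)_y$ is the base $Q^{n-2}=\mathcal{C}^1_y(Q^n)$ of the cone $D$, and make sure your bookkeeping reproduces that value rather than the displayed $k-j+1$.
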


Before the proof we introduce the following construction.
Let $\mathcal{V}^{W^y_{r-j+1}}_s$ be the $s$-th locus of chains of minimal rational curves in $W^y_{r-j+1}$ starting from $y$, i.e. we inductively define $\mathcal{V}^{W^y_{r-j+1}}_{0}=\{y\}$ and
\[\mathcal{V}^{W^y_{r-j+1}}_{s}=\overline{\bigcup\{\text{mrcs in}\  W^y_{r-j+1} \ \text{passing} \ \mathcal{V}^{W^y_{r-j+1}}_{s-1}\}}.\] 
From \cite[Section 6.2]{MR2155089}, we have 
\begin{lemma}\label{tangconihss}
	$TanCone(\mathcal{V}^{W^y_{r-j+1}}_{s},y)=\mathcal{C}^s_y(W^y_{r-j+1}) \subset \mathbb{P}T_y(W^y_{r-j+1}) (1\leq s\leq r-j+1)$ where $\mathcal{C}^1_y(W^y_{r-j+1})$ is the VMRT of $W^y_{r-j+1}$ at $y$ and $\mathcal{C}^s_y(W^y_{r-j+1})$ is the $(s-1)$-th secant variety of $\mathcal{C}^1_y(W^y_{r-j+1})$.
\end{lemma}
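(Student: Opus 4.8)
The plan is to work inside the Harish-Chandra cell of $W:=W^y_{r-j+1}$ centred at $y$, so that $y$ plays the role of the origin and $T_y(W)\cong \mathfrak{m}^+(W)$ carries the rank stratification of Proposition-Definition \ref{HSS-normal}. Then the whole lemma reduces to the single geometric claim that the projectivized tangent cone of $\mathcal{V}^{W}_s$ at $y$ is the rank-$(\le s)$ locus, for which I would invoke Lemma \ref{secantrank}, which already identifies $\mathcal{C}^s_y(W)=\{[v]\in\mathbb{P}T_y(W): rank(v)\le s\}$. Here I abbreviate $m=r-j+1$ and write $N^W_s$ for the infinity locus and $\mathcal{V}^W_s$ for the chain locus computed inside the tube-type space $W$ itself.

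First I would record that the minimal rational curves of $W$ through $y$ are exactly the affine lines $t\mapsto \exp(tv).y$ with $[v]\in \mathcal{C}^1_y(W)$ a rank-one direction, and that $\mathcal{V}^{W}_s$ is irreducible, being the image of the irreducible universal family of minimal rational curves over the irreducible variety $\mathcal{V}^{W}_{s-1}$ (the base case being $\{y\}$). Consequently $TanCone(\mathcal{V}^{W}_s,y)$ is a well-defined subvariety of $\mathbb{P}T_y(W)$ of dimension $\dim\mathcal{V}^{W}_s-1$. For the inclusion $\mathcal{C}^s_y(W)\subseteq TanCone(\mathcal{V}^{W}_s,y)$ I would produce explicit chains: choosing a maximal set of strongly orthogonal roots $\alpha_1,\dots,\alpha_m$ of $W$, the root vectors $e_{\alpha_i}$ commute, so successively flowing along the minimal rational curves in the directions $e_{\alpha_1},\dots,e_{\alpha_s}$ produces the point $\exp\!\big(\sum_{i=1}^s t_i e_{\alpha_i}\big).y\in\mathcal{V}^{W}_s$, and the arc $t\mapsto \exp\!\big(t\sum_{i=1}^s e_{\alpha_i}\big).y$ has tangent direction $\big[\sum_{i=1}^s e_{\alpha_i}\big]$, a vector of rank $s$. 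Since $\mathcal{V}^{W}_s$ is invariant under the isotropy group of $y$ and, by Proposition-Definition \ref{HSS-normal}, every vector of rank $\le s$ is conjugate to some $\sum_{i=1}^s a_i e_{\alpha_i}$, every direction of rank $\le s$ lies in the tangent cone, which gives this inclusion.

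The reverse inclusion needs the most care, and I would obtain it by an equivariance-plus-dimension argument rather than a direct computation. The isotropy group $P_y$ of $y$ in $\mathrm{Aut}^\circ(W)$ is a maximal parabolic preserving $\mathcal{V}^{W}_s$, so $TanCone(\mathcal{V}^{W}_s,y)$ is $P_y$-invariant; by Proposition-Definition \ref{HSS-normal} the $P_y$-orbits on $\mathbb{P}T_y(W)$ are precisely the rank strata $\mathcal{C}^k_y(W)\setminus\mathcal{C}^{k-1}_y(W)$, so the only closed invariant subvarieties are the $\mathcal{C}^k_y(W)$. Hence $TanCone(\mathcal{V}^{W}_s,y)=\mathcal{C}^k_y(W)$ for some $k$, and $k\ge s$ by the previous paragraph. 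To force $k=s$ I would compare dimensions: applying Proposition \ref{tubefib} and Proposition \ref{fiboverinfty}(1) to $W$ gives $\dim\mathcal{V}^{W}_s=\dim\mathcal{C}^s_y(W)+1$, since both fibre over $N^W_s$ with fibre ranks $d+1$ and $d$ respectively, where $d+1$ is the dimension of the relevant balanced subspace of rank $s$ in $W$. Thus $\dim TanCone(\mathcal{V}^{W}_s,y)=\dim\mathcal{V}^{W}_s-1=\dim\mathcal{C}^s_y(W)$, and since the $\mathcal{C}^k_y(W)$ are strictly increasing in dimension, $k=s$.

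The main obstacle is exactly this reverse inclusion: ruling out higher-rank directions. A naive chain-by-chain rank estimate does not suffice, because composition of exponentials is not additive in rank and so higher-length chains could a priori reach the origin to higher order in spurious directions; this is why I route the argument through $P_y$-equivariance (which confines the tangent cone to one of the finitely many $\mathcal{C}^k_y(W)$) together with the exact dimension count from Propositions \ref{tubefib} and \ref{fiboverinfty}. This is precisely the content imported from \cite[Section 6.2]{MR2155089}, now made explicit in the tube-type Hermitian symmetric setting.
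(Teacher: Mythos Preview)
The paper does not give its own proof of this lemma; it simply cites \cite[Section~6.2]{MR2155089}. Your proposal supplies a self-contained argument using only the machinery already developed in the paper (Proposition-Definition~\ref{HSS-normal}, Lemma~\ref{secantrank}, Propositions~\ref{fiboverinfty}(1) and~\ref{tubefib}), and it is correct. The key steps --- producing rank-$s$ tangent directions via commuting strongly orthogonal root vectors, then using $P_y$-invariance to force the tangent cone to be one of the nested $\mathcal{C}^k_y(W)$, and finally pinning down $k=s$ by the dimension identity $\dim\mathcal{V}^{W}_s=\dim\mathcal{C}^s_y(W)+1$ --- all go through. Two small remarks: (i) you don't actually need irreducibility of the tangent cone, since the closed $P_y$-invariant subsets of $\mathbb{P}T_y(W)$ are totally ordered by inclusion and hence any such subset is a single $\mathcal{C}^k_y(W)$; (ii) there is no circularity in invoking Propositions~\ref{fiboverinfty}(1) and~\ref{tubefib} here, as Lemma~\ref{tangconihss} is only used later, in the proof of Proposition~\ref{describnor}. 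In effect your argument is exactly what the cited reference proves in greater generality, specialised to the tube-type Hermitian symmetric setting and made internal to the paper.
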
 

\begin{proof}[Proof of Proposition \ref{describnor}]
	\begin{enumerate}
		\item This follows immediately from Lemma \ref{tangconihss}. As for each $y\in \mathcal{M}_j\backslash \mathcal{M}_{j-1}$. By Corollary \ref{transversal} there is a balanced subspace $W^y_{r-j+1}$ of rank $r-j+1$ passing $y$ which is transversal to $\mathcal{M}_j\backslash \mathcal{M}_{j-1}$ at $y$.
		\item From Proposition-Definition \ref{HSS-normal} without loss of generality we may assume that $v=[e_{\alpha_1}+\cdots+e_{\alpha_j}]\in \mathbb{P}T_o(X)$ where $\alpha_1, \cdots, \alpha_j\in \Pi$ and the bracket means projectivization. Then from Proposition \ref{fiboverinfty}(1) and the Restricted Root Theorem we know the embedded projective tangent space $\hat{T}_v(\mathcal{C}_o^j(X)\backslash \mathcal{C}_o^{j-1}(X))=\mathbb{P}\widetilde{T_v}(\mathcal{C}_o^j(X)\backslash \mathcal{C}_o^{j-1}(X))$ where 
	\[	\widetilde{T_v}(\mathcal{C}^j_o(X)\backslash \mathcal{C}_o^{j-1}(X))=\mathbb{C}\{e_{\beta}, \beta|_{\mathfrak{h}^-}=\frac{1}{2}(\alpha_{\ell}+\alpha_s)\ \text{or} \ \frac{1}{2}\alpha_{\ell}  (1\leq \ell \leq j, 1\leq s\leq r) \}
	\]
	Then the quotient $T_o(X)/\widetilde{T_v}(\mathcal{C}^j_o(X)\backslash \mathcal{C}_o^{j-1}(X))$ can be  identified with \[\mathbb{C}\{e_{\beta}, \beta|_{\mathfrak{h}^-}=\frac{1}{2}(\alpha_{\ell}+\alpha_s)\ \text{or} \ \frac{1}{2}\alpha_{\ell}  (j< \ell\leq s\leq r)\}\] which is the tangent space of a Hermitian characteristic symmetric subspace $Z^o_{r-j}$ passing $o$. Hence $NorCone(\mathcal{C}_o^j(X), \mathbb{P}T_o(X))\cong \mathbb{P}T_o(Z^o_{r-j})$. Also we know 
	for $v'\in T_o(Z_{r-j}^o)$, $[e_{\alpha_1}+\cdots+e_{\alpha_k}+v']\in \mathcal{C}_o^k(X)$ if and only if $[v']\in \mathcal{C}_o^{k-j}(Z^o_{r-j})\subset \mathbb{P}T_o(Z^o_{r-j})$. Then the conclusion follows easily from the geometric description of projectivized normal cones.
	\end{enumerate}
\end{proof}
Then we obtain the smoothness of the centers.
\begin{proposition}\label{stofcha}
	\begin{enumerate}
		\item The successive strict transform 
	$\mu_{k-1}(\mathcal{M}_{k})$ is a desingularization of $\mathcal{M}_{k}$.
	
	\item The successive strict transform  $\tau_{k-1}(\mathcal{C}_o^k(X))$ is a desingularization of $\mathcal{C}_o^k(X)$.
	\end{enumerate}
\end{proposition}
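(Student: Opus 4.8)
The plan is to prove (1) and (2) together by induction, performing the $r-1$ blow-ups one at a time and tracking the singular locus of the successive strict transform of each stratum. The two parts run in parallel, with $\mathcal{M}_\bullet\subset X$ replaced by $\mathcal{C}_o^\bullet(X)\subset\mathbb{P}T_o(X)$, balanced subspaces by Hermitian characteristic symmetric subspaces, and Proposition \ref{describnor}(1) by \ref{describnor}(2); so I describe (1), indicating the one place where (1) appeals to (2). The statement I would prove is
\[
Sing\big(\mu_j(\mathcal{M}_k)\big)=\mu_j(\mathcal{M}_{k-1})\qquad(0\le j<k),
\]
i.e. each blow-up replaces the singular locus of the current strict transform by the strict transform of that locus. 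Granting it, put $j=k-1$: the center of the $(k-1)$-th blow-up is $\mu_{k-2}(\mathcal{M}_{k-1})$, so its strict transform $\mu_{k-1}(\mathcal{M}_{k-1})$ is empty, whence $Sing(\mu_{k-1}(\mathcal{M}_k))=\varnothing$ and $\mu_{k-1}(\mathcal{M}_k)$ is smooth, which is (1); taking $k=j+1$ shows each blow-up center $\mu_j(\mathcal{M}_{j+1})$ is smooth, so the tower of blow-ups is legitimate.

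The engine is Proposition \ref{singularityafterblowup}. The case $j=0$ is the already-established equality $Sing(\mathcal{M}_k)=\mathcal{M}_{k-1}$, together with the smoothness of $\mathcal{M}_1=N_r$, which is a $K^{\mathbb{C}}$-orbit. For the inductive step I apply Proposition \ref{singularityafterblowup} to the singular variety $\mu_{j-1}(\mathcal{M}_k)$ with smooth center $Y=\mu_{j-1}(\mathcal{M}_j)$ and $Z=Sing(\mu_{j-1}(\mathcal{M}_k))=\mu_{j-1}(\mathcal{M}_{k-1})\supseteq Y$; by Lemma \ref{blowclosure} the strict transform $\mu_j(\mathcal{M}_k)$ is the blow-up of $\mu_{j-1}(\mathcal{M}_k)$ along $Y$, and the proposition yields $Sing(\mu_j(\mathcal{M}_k))=\widetilde Z=\mu_j(\mathcal{M}_{k-1})$ once its hypothesis
\[
Sing\big(NorCone(Y,\mu_{j-1}(\mathcal{M}_k))_y\big)=NorCone\big(Y,\mu_{j-1}(\mathcal{M}_{k-1})\big)_y
\]
is verified for every $y\in Y$. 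This normal-cone identity is the whole content of the induction.

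For a point $y\in Y$ lying over the open stratum $\mathcal{M}_j\setminus\mathcal{M}_{j-1}$ the identity is immediate: such $y$ is not in $\mathcal{M}_{j-1}$, so the first $j-1$ blow-ups are isomorphisms near it and the relevant normal cones agree with the original ones, which by Proposition \ref{describnor}(1) are the nested secant varieties $\mathcal{C}_y^{k-j}(W^y_{r-j+1})\subset\mathcal{C}_y^{k-j+1}(W^y_{r-j+1})$ inside $\mathbb{P}T_y(W^y_{r-j+1})$ — attached to $\mathcal{M}_{k-1}\subset\mathcal{M}_k$ respectively — with $W^y_{r-j+1}$ a balanced subspace of rank $r-j+1$. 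Since $W^y_{r-j+1}$ is an irreducible compact Hermitian symmetric space of tube type, the preceding proposition on singular loci applied to it gives $Sing(\mathcal{C}_y^{k-j+1}(W^y_{r-j+1}))=\mathcal{C}_y^{k-j}(W^y_{r-j+1})$, exactly as needed; in particular the first blow-up $(j=1)$ is settled completely. The same reasoning runs for (2): at a point $v\in\mathcal{C}_o^{j}(X)\setminus\mathcal{C}_o^{j-1}(X)$ the transversal Hermitian characteristic symmetric subspace $Z^o_{r-j}$ has rank $r-j\le r-1$, so there the reduction always lands in strictly lower rank.

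The main obstacle is the identity at the remaining points of $Y$, namely those on the exceptional divisors of the earlier blow-ups, lying over the deeper strata $\mathcal{M}_{j'}\setminus\mathcal{M}_{j'-1}$ with $j'<j$, where Proposition \ref{describnor} no longer applies and one must compute normal cones of strict transforms. I would treat these by the transversality of Corollary \ref{transversal}: near such a point there is a balanced subspace $W^y_{r-j'+1}$ transversal to $\mathcal{M}_{j'}$, and transversally to $\mathcal{M}_{j'}$ the whole stratification is modeled on the chain loci $\mathcal{V}_\bullet^{W^y_{r-j'+1}}$, whose tangent cones are the secant varieties $\mathcal{C}_y^\bullet(W^y_{r-j'+1})$ by Lemma \ref{tangconihss}. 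As blow-ups are local and respect this transversal product, the successive blow-ups restrict on the slice to the corresponding tower inside $W^y_{r-j'+1}$, reducing the normal-cone identity for $X$ to the analogous statement for $W^y_{r-j'+1}$. When $j'\ge 2$ this space has rank $r-j'+1\le r-1$ and the reduction is settled by induction on the rank $r$ (base case $r=2$ from \cite[Chapter III, Theorem 3.8]{MR1234494}); the delicate case is $j'=1$, where $W^y_r$ has full rank $r$ and the slice computation is precisely the resolution of the chain locus $\mathcal{V}^{W^y_r}_\bullet$ emanating from a single point, whose tangent cone there is the secant tower $\mathcal{C}^\bullet(W^y_r)$ — so that, after blowing up the point, it is governed by part (2) for $W^y_r$. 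I would therefore order the induction so that part (2) is proved first in all ranks (where, as noted, the transversal reduction always drops the rank), and then invoke it to close part (1). Feeding the verified hypothesis back into Proposition \ref{singularityafterblowup} completes the induction.
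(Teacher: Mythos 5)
Your skeleton coincides with the paper's: both arguments run an induction over the blow-up steps with Proposition \ref{singularityafterblowup} as the engine, verify the normal-cone hypothesis at points of the open stratum of each center via Proposition \ref{describnor} together with the singular-locus proposition applied to the transversal balanced (resp.\ Hermitian characteristic) subspace, and note that this settles the first blow-up completely. The genuine divergence is at the crux you correctly isolate, namely the normal-cone identity at boundary points of the centers lying on earlier exceptional divisors. The paper handles this with no rank induction and without routing (1) through (2): it shows the family of normal cones is a \emph{trivial} product over each fiber $\kappa_2^{-1}(y)$, because the balanced subspaces $W^x_{r-1}$, $x\in\kappa_2^{-1}(y)$, are translates under $Aut(Z_1^{y})$ whose adjoint action on $T_y(W^y_{r-1})$ is trivial by strong orthogonality of the relevant roots; smoothness of $\mu_1(\mathcal{M}_2)$ then propagates the identity to the boundary by taking limits, and parts (1) and (2) are proved by the same mechanism independently of one another. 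Your route --- transversal slices, restriction of the blow-up tower to the slice via Lemma \ref{blowclosure}, induction on the rank, with (2) proved first in all ranks and then invoked to close the full-rank case $j'=1$ of (1) --- is a viable alternative and is structurally more transparent: it explains \emph{why} the boundary behavior is governed by lower-rank data, where the paper's triviality computation is shorter but more ad hoc.

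Two steps in your write-up are asserted rather than proved, and they carry the whole induction. First, ``blow-ups are local and respect this transversal product'': Corollary \ref{transversal} gives only pointwise transversality of tangent spaces at $x$, not a local product decomposition of the stratified pair along the stratum, nor the scheme-theoretic statement that each center meets the slice exactly in the corresponding chain locus, which is what Lemma \ref{blowclosure} needs to identify the restricted tower with the intrinsic tower of $W^y_{r-j'+1}$. This is precisely the point where the paper substitutes the explicit adjoint-action computation; your claim can be repaired the same way (the $Aut(Z^{\kappa(x)}_{j'-1})$-translations together with $K^{\mathbb{C}}$-homogeneity of $N_{r-j'+1}$ trivialize the transversal geometry along the stratum), but as written it is an unproved hypothesis. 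Second, in the delicate case $j'=1$, Lemma \ref{tangconihss} alone is not enough: knowing that the tangent cone of $\mathcal{V}^{W^y_r}_s$ at $y$ is the cone over $\mathcal{C}^s_y(W^y_r)$ leaves the strict transforms, a priori, only perturbations of cylinders over the secant strata near the exceptional divisor, and part (2) would then not control their normal cones off that divisor. What closes this is the stronger fact that in Harish-Chandra coordinates centered at $y$ the chain loci are genuinely dilation-invariant cones over the secant varieties (the cell of $\mathcal{V}^{W^y_r}_s$ is exactly the rank-$\le s$ locus, invariant under the $\mathbb{C}^*$-action of Lemmas \ref{Cstar1} and \ref{Cstar2}), so that all strict transforms are pulled back from the exceptional divisor and the slice tower literally becomes the part-(2) tower times a line direction. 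With these two repairs --- both available from the paper's own lemmas --- your double induction closes and yields the Proposition.
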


\begin{proof}
	\begin{enumerate}
		\item 
		By Proposition \ref{describnor}(1) we know $Sing(NorCone(\mathcal{M}_1,\mathcal{M}_{k})_y)=NorCone(\mathcal{M}_1,\mathcal{M}_{k-1})_y$ for $y\in \mathcal{M}_1$.
		Since $\mathcal{M}_1$ is smooth, from Proposition \ref{singularityafterblowup} we obtain that $\mu_1(\mathcal{M}_2)$ is smooth and $Sing(\mu_1(\mathcal{M}_k))=\mu_1(\mathcal{M}_{k-1}))$. Also we know for $y\in \mathcal{M}_2\backslash \mathcal{M}_1$, \[(NorCone(\mu_1(\mathcal{M}_2), \mu_1(\mathcal{M}_k))_y, NorCone(\mu_1(\mathcal{M}_2), \mu_1(X))_y)\cong (\mathcal{C}_y^{k-1}(W^y_{r-1})\subset \mathbb{P}T_y(W^y_{r-1})) (\sharp).\]
		To continue using Proposition \ref{singularityafterblowup} in the next blow-up, it remains to show that $(\sharp)$ also holds over the points in the boundary of $\mu_1(\mathcal{M}_2)$ (i.e. $\mu_1(\mathcal{M}_2)\backslash (\mathcal{M}_2\backslash \mathcal{M}_1)$).
		For $y\in N_{r-1}$, we claim that  $NorCone(\mu_1(\mathcal{M}_2), \mu_1(\mathcal{M}_k))\rightarrow \mu_1(\mathcal{M}_2)$ is a trivial family over $\kappa_2^{-1}(y)$, i.e. \[NorCone(\mu_1(\mathcal{M}_2), \mu_1(\mathcal{M}_k))|_{\kappa_2^{-1}(y)} = \kappa_2^{-1}(y)\times \mathcal{C}_y^{k-1}(W^y_{r-1}).\] To prove the claim, from the proof of Corollary \ref{transversal} we know the family of balanced subspaces $\{W^{x}_{r-1}\}_{x\in \kappa_2^{-1}(y)}$ is translated by the group action of $Aut(Z_1^{y})$ on $W^{y}_{r-1}$. Also we know for $g\in Aut(Z_1^{y})$, the adjoint action $Adg$ acts on $T_y(W_{r-1}^y)$ trivially (since from the proof of Corollary \ref{transversal}, if we write $y=c_{\Pi_{r-1}}.o$, roots corresponding to the root spaces in $T_o(c_{\Pi_{r-1}}Z_{1}^y)$) and $T_o(c_{\Pi_{r-1}}W_{r-1}^y)$ are strongly orthogonal to each other). 
		Therefore the claim holds. Now by the smoothness of $\mu_1(\mathcal{M}_2)$ we know $(\sharp)$ holds over the points in the boundary.
		The rest can be done by easy induction.

	\item 
	The second statement follows from a similar argument as in (1) using Proposition \ref{singularityafterblowup} and \ref{describnor}(2). 
	\end{enumerate}
\end{proof}

\subsection{The Bia{\l}ynicki-Birula decomposition of irreducible compact Hermtian symmetric spaces}\label{BBIHSS}
In this section we give the relation between the stratification $\mathcal{M}_1\subset \mathcal{M}_2\subset \cdots \subset \mathcal{M}_r$ and the Bia{\l}ynicki-Birula decomposition of $X$.
 We first recall the Bia{\l}ynicki-Birula decomposition theorem (over $\mathbb{C}$) as follows.

\begin{theorem}[\cite{MR366940}, Theorem 4.1, 4.2, 4.3]\label{BBdecom}
	Let $X$ be a complete smooth complex manifold with an algebraic $\mathbb{C}^*$-action. Let $X^{\mathbb{C}^*}$ be the set of fixed points under the $\mathbb{C}^*$-action and $X^{\mathbb{C}^*}=\bigcup_{i=0}^r (X^{\mathbb{C}^*})_i$ be the decomposition of  $X^{\mathbb{C}^*}$ into connected components. Then there exists a unique locally closed $\mathbb{C}^*$-invariant decomposition of $X$,\[X=\bigsqcup_{i=0}^r X^+_i \left( resp. X=\bigsqcup_{i=0}^r X^-_i \right)\]
	and morphisms $\gamma^+_i: X^+_i\rightarrow (X^{\mathbb{C}^*})_i$ (resp. $\gamma^-_i: X^-_i\rightarrow (X^{\mathbb{C}^*})_i$), $0\leq i
	\leq r$, such that the following holds.
	\begin{enumerate}
		\item $X^+_i$ (resp. $X^-_i$) are smooth $\mathbb{C}^*$-invariant complex submanifolds of $X$. $(X^{\mathbb{C}^*})_i$ is a closed complex submanifold of $X^+_i$ (resp. $X^-_i$). $X^+_i\cap X^-_i=(X^{\mathbb{C}^*})_i$.
		
		\item $\gamma^+_i$ (resp. $\gamma^-_i$) is a $\mathbb{C}^*$-fibration over $(X^{\mathbb{C}^*})_i$ (i.e. each fiber is a $\mathbb{C}^*$-module) such that $\gamma^+_i|_{(X^{\mathbb{C}^*})_i}$ (resp. $\gamma^-_i|_{(X^{\mathbb{C}^*})_i}$) is the identity.
		
		\item  Let $T_x(X)^+, T_x(X)^0, T_x(X)^-$ be the weight spaces of the isotropy action on the tangent space $T_x(X)$ with positive, zero, negative weights respectively. Then for any $x\in (X^{\mathbb{C}^*})_i$, the tangent space $T_x(X^+_i)=T_x(X)^0\oplus T_x(X)^+$ (resp. $T_x(X^-_i)=T_x(X)^0\oplus T_x(X)^-$) and dimension of the fibration given by $\gamma_i^+$ (resp. ($\gamma_i^-$)) equals $\dim T_x(X)^+$ (resp. $\dim T_x(X)^-$).
	\end{enumerate}
We call $X_i^+$ (resp. $X_i^-$) a stable (resp. unstable) manifold of $X$ with respect to the $\mathbb{C}^*$-action.
\end{theorem}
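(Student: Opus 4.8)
The plan is to reduce the whole statement to a local analysis near the fixed locus, using completeness of $X$ to control limits globally. First I would analyze the fixed point set. By Sumihiro's theorem every fixed point $x$ admits a $\mathbb{C}^*$-invariant affine open neighborhood, into which the action embeds equivariantly as a linear action on some $\mathbb{C}^N$. Combined with smoothness of $X$, this shows that a neighborhood of $x$ is $\mathbb{C}^*$-equivariantly isomorphic (\'etale-locally, or in the analytic topology) to a neighborhood of $0$ in the tangent space $T_x(X)$ equipped with the linear isotropy action. One then reads off the weight decomposition $T_x(X)=T_x(X)^+\oplus T_x(X)^0\oplus T_x(X)^-$ and concludes that $X^{\mathbb{C}^*}$ is smooth with $T_x(X^{\mathbb{C}^*})=T_x(X)^0$; decomposing into connected components gives the $(X^{\mathbb{C}^*})_i$.

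Next I would produce the global decomposition from the valuative criterion of properness. For each $x$ the orbit map $t\mapsto t\cdot x$, $\mathbb{C}^*\to X$, extends uniquely to a morphism $\mathbb{A}^1\to X$ (resp.\ from $\mathbb{P}^1\setminus\{0\}$), so that $\lim_{t\to 0}t\cdot x$ (resp.\ $\lim_{t\to\infty}t\cdot x$) exists and is a fixed point. Declaring $X_i^+$ to be the set of $x$ whose limit lands in $(X^{\mathbb{C}^*})_i$ then yields a set-theoretic partition $X=\bigsqcup_i X_i^+$, and the limit assignment defines the map $\gamma_i^+$.

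I would then upgrade this to the scheme-theoretic assertions (1)--(3) by gluing the linear local models. In the linear model at $x\in(X^{\mathbb{C}^*})_i$ the attracting set is exactly $T_x(X)^0\oplus T_x(X)^+$, and the projection onto $T_x(X)^0$ exhibits $X_i^+$ locally as the total space of the vector bundle with fiber $T_x(X)^+$. This delivers at once the smoothness and local closedness of $X_i^+$, the closed embedding of $(X^{\mathbb{C}^*})_i$, the identity $X_i^+\cap X_i^-=(X^{\mathbb{C}^*})_i$, the $\mathbb{C}^*$-fibration structure of $\gamma_i^+$, and the tangent formula $T_x(X_i^+)=T_x(X)^0\oplus T_x(X)^+$. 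Uniqueness follows because any $\mathbb{C}^*$-invariant locally closed decomposition with these fibration properties is forced to coincide with the one cut out by the limit map.

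The hard part will be the local linearization together with the passage from a merely set-theoretic, analytic description to an algebraic, scheme-theoretic one: showing that near each fixed point the action is equivariantly isomorphic to its linearization on $T_x(X)$, and that the resulting cells $X_i^+$ are genuinely locally closed algebraic subvarieties carrying an \emph{algebraic} affine-bundle map $\gamma_i^+$, rather than only constructible analytic sets. This is exactly where Bia{\l}ynicki-Birula's weight-filtration argument (or Luna's \'etale slice theorem) does the essential work, and where one must keep in mind that the cells are in general only locally closed, not closed.
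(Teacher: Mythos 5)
The paper does not prove this statement: it is quoted verbatim, with attribution, from Bia{\l}ynicki-Birula's paper \cite{MR366940} (Theorems 4.1--4.3), so there is no internal argument to compare yours against; the relevant benchmark is the cited literature. Measured against that, your outline is a correct reconstruction of the standard proof and is sound where it is explicit: completeness plus the valuative criterion (and separatedness, for uniqueness of the extension) gives the limit $\lim_{t\to 0}t\cdot x$, the limit map defines the set-theoretic partition and the maps $\gamma_i^+$, the weight analysis at fixed points gives smoothness of $X^{\mathbb{C}^*}$ with $T_x(X^{\mathbb{C}^*})=T_x(X)^0$ and statement (3), and your uniqueness argument is genuinely easy: since each fiber of $\gamma_i^+$ is a $\mathbb{C}^*$-module, every point of a cell flows to its fixed component, so any decomposition with properties (1)--(2) must coincide with the partition by limit components.

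Two caveats on the step you yourself flag as the hard part, which is where your sketch stops short of a proof. First, Sumihiro's theorem (applicable because smooth implies normal) gives an invariant affine neighborhood of a fixed point and a closed equivariant embedding into a $\mathbb{C}^*$-module, but genuine Zariski-local linearization onto $T_x(X)$ is in general impossible; your ``local model'' is only an equivariant \'etale (or analytic) map $U\to T_x(X)$, and transporting the attractor structure along such a map is exactly the nontrivial compatibility statement (attractors commute with equivariant \'etale morphisms) that the modern treatments of Drinfeld and Jelisiejew--Sienkiewicz establish, while Bia{\l}ynicki-Birula's original argument instead runs a weight-filtration analysis on the coordinate ring of the invariant affine chart. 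Second, the local model only describes $X_i^+$ near the fixed component; local closedness of the cell and the global algebraic fibration structure of $\gamma_i^+$ require sweeping the local attractor by the $\mathbb{C}^*$-action (every point of $X_i^+$ eventually flows into the linearizable neighborhood, so $X_i^+$ is open in its closure), after which one even gets Zariski-local triviality. With these two points made precise your plan does deliver the theorem; as written it correctly names the tools but defers the essential work to them, which is acceptable here only because the paper itself treats the result as a black box.
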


We know there is a natural $\mathbb{C}^*$-action induced by the one-dimensional center of $\mathfrak{k}^{\mathbb{C}}$. Let $c$ be the central element satisfying that $[c,v]=v$ for any $v\in \mathfrak{m}^+$. Let $W\cong \mathbb{C}^n \subset X$ be the Harish-Chandra embedding given by the map $\exp: \mathfrak{m}^+ \rightarrow \exp(\mathfrak{m}^+).o$ where $o$ is the base point. Let $\omega_t=\exp(tc)$ be the $\mathbb{C}^*$-action. We can see that $o$ is fixed by $\omega_t$ and $W$ is invariant under $\omega_t$. Moreover we have

\begin{lemma}\label{BBfix}
	The points in $N_k$ are fixed by $\omega_t$.
\end{lemma}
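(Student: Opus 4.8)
The plan is to reduce the statement to a single reference point and then to a short computation in $\mathfrak{h}^{\mathbb{C}}$. Since $c$ spans the one-dimensional center of $\mathfrak{k}^{\mathbb{C}}$ and $K^{\mathbb{C}}$ is connected, the inner automorphisms of $K^{\mathbb{C}}$ fix $c$, so $\omega_t=\exp(tc)$ is central in $K^{\mathbb{C}}$. Using the description $N_k=K^{\mathbb{C}}c_{\Gamma}o$ with $|\Gamma|=k$, for any $y=g\,c_{\Gamma}o$ with $g\in K^{\mathbb{C}}$ we get $\omega_t\cdot y=\exp(tc)\,g\,c_{\Gamma}o=g\,\exp(tc)\,c_{\Gamma}o=g\,(\omega_t\cdot c_{\Gamma}o)$. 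Hence it suffices to show that $\omega_t$ fixes the single reference point $c_{\Gamma}o$, after which the claim for all of $N_k$ follows by transporting under $K^{\mathbb{C}}$.

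To treat $c_{\Gamma}o$ I would conjugate the flow past $c_{\Gamma}$, writing $\omega_t\,c_{\Gamma}o=c_{\Gamma}\exp\!\big(t\,\mathrm{Ad}(c_{\Gamma}^{-1})c\big)\cdot o$. Since $o$ is the base point with stabilizer $P$ and $\mathfrak{p}=\mathfrak{k}^{\mathbb{C}}+\mathfrak{m}^-$, the point $c_{\Gamma}o$ is fixed as soon as $\mathrm{Ad}(c_{\Gamma}^{-1})c\in\mathfrak{p}$; in fact I expect the element to land in $\mathfrak{h}^{\mathbb{C}}\subset\mathfrak{k}^{\mathbb{C}}$, which makes the membership immediate.

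The remaining step is the explicit evaluation of $\mathrm{Ad}(c_{\Gamma}^{-1})c$. The formula of Lemma \ref{inftra}, $\mathrm{Ad}\exp(\tfrac{\pi}{2}x_{\alpha})h_{\beta}=h_{\beta}-\alpha(h_{\beta})h_{\alpha}$, depends on its argument only through the linear assignments $h_{\beta}\mapsto h_{\beta}$ and $h_{\beta}\mapsto\alpha(h_{\beta})$, so it extends verbatim to any $H\in\mathfrak{h}^{\mathbb{C}}$, giving $\mathrm{Ad}\exp(\tfrac{\pi}{2}x_{\alpha})H=H-\alpha(H)h_{\alpha}$ (and the same with $-\tfrac{\pi}{2}$, since $\sinh(\pi\sqrt{-1})=0$ and $\cosh(\pi\sqrt{-1})-1=-2$ are unchanged under the sign). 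Because the roots of $\Gamma\subset\Pi$ are strongly orthogonal, the $x_{\alpha_i}$ commute and $\alpha_j(h_{\alpha_i})=0$ for $i\neq j$, so applying the commuting factors of $c_{\Gamma}^{-1}=\exp(-\tfrac{\pi}{2}\sum_{\alpha_i\in\Gamma}x_{\alpha_i})$ one at a time yields $\mathrm{Ad}(c_{\Gamma}^{-1})c=c-\sum_{\alpha_i\in\Gamma}\alpha_i(c)\,h_{\alpha_i}$. As each $\alpha_i\in\Pi\subset\Delta^+_M$ is a noncompact positive root, $[c,e_{\alpha_i}]=e_{\alpha_i}$ forces $\alpha_i(c)=1$, so $\mathrm{Ad}(c_{\Gamma}^{-1})c=c-\sum_{\alpha_i\in\Gamma}h_{\alpha_i}\in\mathfrak{h}^{\mathbb{C}}\subset\mathfrak{p}$, and therefore $\exp(t\,\mathrm{Ad}(c_{\Gamma}^{-1})c)$ fixes $o$ for every $t$.

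The argument is short, and the two points needing care are exactly the two reductions rather than any deep estimate. First, one should justify cleanly that $\exp(tc)$ commutes with all of $K^{\mathbb{C}}$, for which connectedness of the Levi $K^{\mathbb{C}}$ (equivalently, that inner automorphisms act trivially on the center of $\mathfrak{k}^{\mathbb{C}}$) is the relevant input. Second, one must make sure the cross terms in the iterated adjoint action cancel, and this is precisely where strong orthogonality of $\Pi$ enters through $\alpha_j(h_{\alpha_i})=0$; I expect this elementary vanishing to be the only genuine verification in the proof.
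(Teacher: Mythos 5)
Your proof is correct, but it takes a genuinely different route from the paper's. The paper argues directly from the definition of $N_k$: each point of $N_k$ is the unique infinity point of a rational curve $C$ with $C\cap W=\{\exp(sv).o,\ s\in\mathbb{C}\}$ for some $v$ with $[v]\in\mathcal{C}_o^k(X)\backslash\mathcal{C}_o^{k-1}(X)$, and since $[c,v]=v$ gives $\omega_t\exp(sv).o=\exp(e^{t}sv).o$, the flow preserves each such affine line and hence fixes its limit point $C\backslash W$ --- three lines, using nothing beyond the defining property of $c$. You instead invoke the orbit description $N_k=K^{\mathbb{C}}c_{\Gamma}o$ from Section \ref{infinityloci}: centrality of $c$ in $\mathfrak{k}^{\mathbb{C}}$ together with connectedness of $K^{\mathbb{C}}$ makes $\omega_t$ commute with $K^{\mathbb{C}}$, reducing everything to the reference point, and you then compute $\mathrm{Ad}(c_{\Gamma}^{-1})c=c-\sum_{\alpha_i\in\Gamma}h_{\alpha_i}\in\mathfrak{h}^{\mathbb{C}}\subset\mathfrak{p}$. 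Each step checks out: the extension of Lemma \ref{inftra} to arbitrary $H\in\mathfrak{h}^{\mathbb{C}}$ is legitimate because the series depends on $H$ only through $\alpha(H)$; strong orthogonality of $\Pi$ does give $[x_{\alpha_i},x_{\alpha_j}]=0$ and $\alpha_j(h_{\alpha_i})=0$; and $\alpha_i(c)=1$ follows from $e_{\alpha_i}\in\mathfrak{m}^+$. (Your sign discussion for $c_{\Gamma}^{-1}$ is even dispensable, since the paper records $\mathrm{Ad}\,c_{\Gamma}^2=\mathrm{id}$, so $\mathrm{Ad}(c_{\Gamma}^{-1})=\mathrm{Ad}(c_{\Gamma})$.) What your route buys: your computation is precisely the one the paper performs later in Lemma \ref{Cstar1}, where $\mathrm{Ad}\exp(\frac{\pi}{2}x_{\alpha})c=c-h_{\alpha}$ is derived and used to exhibit the dilation action on the fibers of $\kappa_k$; your lemma is the $v=0$ special case, so your argument unifies Lemma \ref{BBfix} with Lemmas \ref{Cstar1} and \ref{Cstar2}, and the centrality observation additionally explains for structural reasons why every $K^{\mathbb{C}}$-orbit of a fixed point is fixed pointwise. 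What the paper's route buys: it is shorter, avoids both the orbit description and any root computation, and works uniformly from the definition of the infinity loci.
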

\begin{proof}
	Recall that  \[\begin{aligned}
	N_k=&\{C\backslash W: C \ \text{is a rational curve passing through $o$ such that} \\& [T_o(C)] \ \text{is of rank} \ k\  \text{and} \ C\cap W \ \text{is an affine line}\} 
	\end{aligned}\] 
		We know $C\cap W=\{\exp(sv).o, s\in \mathbb{C}\}$ for some $c$ with $[v]\in \mathcal{C}_o^k(X)\backslash\mathcal{C}_o^{k-1}(X)$. Also $\omega_t\exp(sv).o=\exp(e^tsv).o$ and hence $\omega_s$ keeps $C\cap W$ invaraint. Thus the infinity point of $C$ is fixed by $\omega_t$.
\end{proof}

We then give the action of $\omega_t$ on the affines cells of the characteristic Hermitian symmetric subspaces contained in $\mathcal{M}_k\backslash \mathcal{M}_{k-1}$ (see Proposition \ref{fiboverinfty}). We can prove that
\begin{lemma}\label{Cstar1}
 Each fiber of $\kappa_k :\mathcal{M}_k\backslash \mathcal{M}_{k-1}\rightarrow N_{r-k+1}$ is invariant under $\omega_t$. Moreover the action is precisely the dilation with rate $e^t$ fixing the zero point in $N_{r-k+1}$.
\end{lemma}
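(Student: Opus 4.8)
The plan is to reduce the statement to a single reference fiber and then compute the $\omega_t$-action there by conjugating $\omega_t$ through the element $c_{\Pi_{r-k+1}}$. I would rely on two facts: that $c$ lies in the one-dimensional center of $\mathfrak{k}^{\mathbb{C}}$, so that $\omega_t=\exp(tc)$ commutes with every element of $K^{\mathbb{C}}$, and the explicit description of the fibers of $\kappa_k$ from Proposition \ref{fiboverinfty}(2): the fiber over the reference point $y=c_{\Pi_{r-k+1}}.o\in N_{r-k+1}$ is $F_y=\{c_{\Pi_{r-k+1}}\exp(v).o:v\in\mathfrak{m}_{r-k+1}^+\}$, and every other fiber is $g\cdot F_y$ for some $g\in K^{\mathbb{C}}$.

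First I would dispose of the reduction to $F_y$. Since $\omega_t$ commutes with $g\in K^{\mathbb{C}}$, for any $p\in F_y$ one has $\omega_t\cdot(g\cdot p)=g\cdot(\omega_t\cdot p)$; hence $g\cdot F_y$ is $\omega_t$-invariant as soon as $F_y$ is, and the induced action on $g\cdot F_y$ is the transport, under the Harish--Chandra chart isomorphism $g$, of the action on $F_y$. Thus it suffices to analyse $F_y$: a dilation of $F_y$ fixing its zero point is inherited by every $g\cdot F_y$, whose zero point $g\cdot y$ again lies in $N_{r-k+1}$.

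Next I would compute on $F_y$ by writing $\omega_t c_{\Pi_{r-k+1}}=c_{\Pi_{r-k+1}}\exp(tc')$ with $c'=Ad(c_{\Pi_{r-k+1}}^{-1})c=Ad(c_{\Pi_{r-k+1}})c$, using $Ad\,c_{\Pi_{r-k+1}}^2=\mathrm{id}$. The computation of Lemma \ref{inftra} applies verbatim to any $H\in\mathfrak{h}^{\mathbb{C}}$, giving $Ad\exp(\tfrac{\pi}{2}x_{\alpha_i})H=H-\alpha_i(H)h_{\alpha_i}$; applying this for the commuting, strongly orthogonal generators $x_{\alpha_1},\dots,x_{\alpha_{r-k+1}}$ and using $\alpha_i(c)=1$ yields $c'=c-\sum_{i=1}^{r-k+1}h_{\alpha_i}$. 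As $c'\in\mathfrak{h}^{\mathbb{C}}\subset\mathfrak{p}$ we have $\exp(tc').o=o$, so $\omega_t\cdot c_{\Pi_{r-k+1}}\exp(v).o=c_{\Pi_{r-k+1}}\exp\!\big(Ad\exp(tc')\,v\big).o$.

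It then remains to evaluate the weights $\beta(c')$ on the root vectors $e_\beta$ spanning $\mathfrak{m}_{r-k+1}^+$, i.e. those with $\beta|_{\mathfrak{h}^-}=\tfrac12(\alpha_\ell+\alpha_j)$ or $\tfrac12\alpha_\ell$ for $r-k+2\le\ell\le j\le r$. Since each $\alpha_i\in\mathfrak{h}^-$, one has $(\beta,\alpha_i)=(\rho(\beta),\alpha_i)$, and the strong orthogonality of $\Pi$ forces $(\rho(\beta),\alpha_i)=0$ for all $i\le r-k+1$; hence $\beta(h_{\alpha_i})=0$ and $\beta(c')=\beta(c)=1$. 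Consequently $Ad\exp(tc')v=e^tv$ for every $v\in\mathfrak{m}_{r-k+1}^+$, so $\omega_t\cdot c_{\Pi_{r-k+1}}\exp(v).o=c_{\Pi_{r-k+1}}\exp(e^tv).o$: the fiber $F_y$ is preserved, and the action is the dilation of rate $e^t$ fixing $v=0$, i.e. the point $y\in N_{r-k+1}$ (consistently with Lemma \ref{BBfix}). Combined with the first step this proves the lemma for every fiber. I expect this last step to be the crux: one must check that conjugation by $c_{\Pi_{r-k+1}}$ does not alter the weight on the characteristic-symmetric directions, and this is exactly where the strong orthogonality of $\Pi$ is essential, since it kills every cross term $(\rho(\beta),\alpha_i)$ and leaves the weight equal to $1$.
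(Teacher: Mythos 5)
Your proof is correct and follows essentially the same route as the paper: conjugating $\omega_t$ past $c_{\Pi_{r-k+1}}$ to get $c'=c-\sum_{i=1}^{r-k+1}h_{\alpha_i}$ (the paper does the series computation of Lemma \ref{inftra} directly on $c$, you invoke its linear extension to $\mathfrak{h}^{\mathbb{C}}$), then using orthogonality of the roots spanning $\mathfrak{m}^+_{r-k+1}$ to $\Pi_{r-k+1}$ to conclude $Ad\exp(tc')v=e^tv$ and hence dilation by $e^t$. The only addition is that you spell out the reduction from an arbitrary fiber to the reference fiber via centrality of $c$ in $\mathfrak{k}^{\mathbb{C}}$, which the paper leaves implicit; this is a welcome clarification, not a different method.
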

\begin{proof}
	As in Proposition \ref{fiboverinfty} we can write a fiber of $\kappa_k$ as
	$\{c_{\Pi_{r-k+1}}\exp(v).o, v\in \mathfrak{m}_{r-k+1}^+\}$. As in Lemma \ref{inftra} we can compute
	\[\begin{aligned}
	Ad\exp(\frac{\pi}{2}x_{\alpha})c=&\exp(ad(\frac{\pi}{2}x_{\alpha}))c\\
	&=c+\frac{1}{2}\sum_{k=0}^{\infty}\frac{(\pi\sqrt{-1})^{2k+1}}{(2k+1)!}y_\alpha
	+\frac{1}{2}\sum_{k=1}^{\infty}\frac{(\pi\sqrt{-1})^{2k}}{(2k)!}h_{\alpha}\\
	&=c+\frac{1}{2}\sinh(\pi\sqrt{-1})y_{\alpha}+\frac{1}{2}(\cosh(\pi\sqrt{-1})-1)h_{\alpha}\\
	&=c-h_{\alpha}
	\end{aligned}
	\]
	Since all roots with root spaces in $\mathfrak{m}^+_{r-k+1}$ are strongly  orthogonal to $\Pi_{r-k+1}$, we know $[h_{\alpha}, v]=0$ for any $\alpha \in \Pi_{r-k+1}, v\in \mathfrak{m}^+_{r-k+1}$ and hence
	\[\begin{aligned}
	\omega_tc_{\Pi_{r-k+1}}\exp(v).o&=\exp(tc)c_{\Pi_{r-k+1}}\exp(v).o\\
	&=c_{\Pi_{r-k+1}}(Adc_{\Pi_{r-k+1}}\exp(tc))\exp(v).o\\
	&=c_{\Pi_{r-k+1}}\exp(t(c-\sum_{\alpha\in \Pi_{r-k+1}}h_{\alpha}))\exp(v).o\\
	&=c_{\Pi_{r-k+1}}\exp(Ad\exp(tc)v).o=c_{\Pi_{r-k+1}}\exp(e^tv).o
	\end{aligned}
	\]
	Thus the Lemma follows.
\end{proof}

Also we can compute the $\mathbb{C}^*$-action $\omega_t$ on the fibers of $\nu_k: \mathcal{V}_k\backslash \mathcal{V}_{k-1}$.

\begin{lemma}\label{Cstar2}
	Each  fiber of $\nu_k :\mathcal{V}_k\backslash \mathcal{V}_{k-1}\rightarrow N_k$ is invariant under $\omega_t$. Moreover the action is precisely the dilation with rate $e^{-t}$ fixing the zero point in $N_{k}$.
\end{lemma}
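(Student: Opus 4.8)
The plan is to run the same computation as in the proof of Lemma~\ref{Cstar1}, the only new ingredient being a root-weight count that reverses the sign of the exponent. By Proposition~\ref{tubefib} a fiber of $\nu_k$ over the reference point $z_k=c_{\Pi_k}.o\in N_k$ can be written as
\[\{c_{\Pi_k}\exp(v).o,\ v\in\mathfrak{u}^+_k\},\qquad \mathfrak{u}^+_k=\mathbb{C}\{e_\beta,\ \beta|_{\mathfrak{h}^-}=\tfrac12(\alpha_\ell+\alpha_j),\ 1\leq\ell\leq j\leq k\}.\]
Since $c$ is central in $\mathfrak{k}^{\mathbb{C}}$, the flow $\omega_t=\exp(tc)$ commutes with the $K^{\mathbb{C}}$-action that sweeps out all fibers from this reference one, so it suffices to treat the reference fiber; every other fiber, together with its distinguished zero point in $N_k$, is a $K^{\mathbb{C}}$-translate of it and the statement transports. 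First I would commute $\omega_t$ past $c_{\Pi_k}$ using $Adc_{\Pi_k}^2=\mathrm{id}$, giving $\omega_t c_{\Pi_k}=c_{\Pi_k}\exp\!\big(t\,Adc_{\Pi_k}(c)\big)$, and then iterate the single-root identity $Ad\exp(\tfrac{\pi}{2}x_\alpha)c=c-h_\alpha$ from the proof of Lemma~\ref{Cstar1} over the strongly orthogonal set $\Pi_k$ to obtain $Adc_{\Pi_k}(c)=c-\sum_{\alpha\in\Pi_k}h_\alpha\in\mathfrak{h}^{\mathbb{C}}$, which fixes $o$.

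The decisive step is the weight computation. Because $c-\sum_{\alpha\in\Pi_k}h_\alpha$ lies in $\mathfrak{h}^{\mathbb{C}}$ and fixes $o$, I would rewrite
\[\omega_t c_{\Pi_k}\exp(v).o=c_{\Pi_k}\exp\!\Big(Ad\exp\big(t(c-\textstyle\sum_{\alpha\in\Pi_k}h_\alpha)\big)\,v\Big).o,\]
so I only need the $ad$-eigenvalue of $c-\sum_{\alpha\in\Pi_k}h_\alpha$ on each $e_\beta$ with $\beta\in\mathfrak{u}^+_k$. Here is the contrast with Lemma~\ref{Cstar1}: there the relevant roots were strongly orthogonal to $\Pi_{r-k+1}$, forcing $\sum\beta(h_\alpha)=0$ and eigenvalue $+1$; now $\beta|_{\mathfrak{h}^-}=\tfrac12(\alpha_\ell+\alpha_j)$ with \emph{both} indices in $\{1,\dots,k\}$. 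Using the equal length of the $\alpha_i$ and their strong orthogonality one checks $\beta(h_{\alpha_i})=\delta_{\ell i}+\delta_{j i}$, hence $\sum_{\alpha_i\in\Pi_k}\beta(h_{\alpha_i})=2$. Combined with $[c,e_\beta]=e_\beta$ this gives $ad\big(c-\sum_{\alpha\in\Pi_k}h_\alpha\big)e_\beta=(1-2)e_\beta=-e_\beta$, so $Ad\exp\big(t(c-\sum_{\alpha\in\Pi_k}h_\alpha)\big)e_\beta=e^{-t}e_\beta$.

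I would then conclude $\omega_t c_{\Pi_k}\exp(v).o=c_{\Pi_k}\exp(e^{-t}v).o$, i.e. each fiber is $\omega_t$-invariant and the action restricts to dilation by $e^{-t}$ fixing the zero point $z_k\in N_k$ (the point $v=0$), with the general fiber handled by the centrality reduction above. The main obstacle is exactly the weight count $\sum_{\alpha\in\Pi_k}\beta(h_\alpha)=2$: it is what separates $\mathcal{V}_k$ from $\mathcal{M}_k$ and yields the rate $e^{-t}$ rather than $e^{t}$, reflecting that the balanced-subspace directions $\tfrac12(\alpha_\ell+\alpha_j)$ point into $\Pi_k$ whereas the characteristic-subspace directions in Lemma~\ref{Cstar1} are strongly orthogonal to $\Pi_{r-k+1}$. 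Everything else is the formal group-theoretic manipulation already carried out there.
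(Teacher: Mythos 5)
Your proposal is correct and follows essentially the same route as the paper's proof: commuting $\omega_t$ past $c_{\Pi_k}$ via $Ad\,c_{\Pi_k}(c)=c-\sum_{\alpha\in\Pi_k}h_\alpha$ and then using the weight count $[\sum_{\alpha\in\Pi_k}h_\alpha,v]=2v$ for $v\in\mathfrak{u}_k^+$ to obtain the rate $e^{-t}$. Your only additions (the explicit $K^{\mathbb{C}}$-equivariance reduction to the reference fiber via centrality of $c$, and the explicit check $\beta(h_{\alpha_i})=\delta_{\ell i}+\delta_{ji}$) are details the paper leaves implicit, and both are valid.
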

\begin{proof}
	As in Proposition \ref{tubefib} we can write a fiber of $\nu_k$ as
	$\{c_{\Pi_{k}}\exp(v).o, v\in \mathfrak{u}_{k}^+\}$. As in Lemma \ref{Cstar1} we have
	\[\begin{aligned}
	\omega_tc_{\Pi_{k}}\exp(v).o&=\exp(tc)c_{\Pi_{k}}\exp(v).o\\
	&=c_{\Pi_{k}}(Adc_{\Pi_{k}}\exp(tc))\exp(v).o\\
	&=c_{\Pi_{k}}\exp(t(c-\sum_{\alpha\in \Pi_{k}}h_{\alpha}))\exp(v).o\\
	\end{aligned}
	\]
	Since $\mathfrak{u}_{k}^+=\mathbb{C}\{e_{\beta}, \beta|_{\mathfrak{h}^-}=\frac{1}{2}(\alpha_{\ell}+\alpha_j), (1\leq \ell \leq j\leq k) \}$, we know $[\sum_{\alpha\in \Pi_{k}}h_{\alpha}, v]=2v$ for any $v\in \mathfrak{u}_{k}^+$. Then $	\omega_tc_{\Pi_{k}}\exp(v).o=c_{\Pi_{k}}\exp(e^{-t}v).o$
	and the Lemma follows.
\end{proof}

Then we can write down the Bia{\l}ynicki-Birula decomposition explicitly for the irreducible compact Hermtian symmetric space. We have 

\begin{proposition}
	The following setting gives the Bia{\l}ynicki-Birula decomposition for the irreducible compact Hermtian symmetric space $X$, using the notations in Theorem \ref{BBdecom}.
	Let $(X^{\mathbb{C}^*})_0=\{o\}$. Let $(X^{\mathbb{C}^*})_i=N_i (1\leq i\leq r)$. Let $X^+_0=W=X\backslash D$ be the affine cell of $X$. Let $X^+_r=\mathcal{M}_1$, $X^+_i=\mathcal{M}_{r-i+1}\backslash \mathcal{M}_{r-i} (1\leq i\leq r-1)$. Let $X^-_0=\{o\}$. Let $X^-_i=\mathcal{V}_{i}\backslash \mathcal{V}_{i-1} (1\leq i\leq r)$.
\end{proposition}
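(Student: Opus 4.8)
The plan is to deduce everything from the \emph{uniqueness} clause of Theorem \ref{BBdecom}: once the $\mathbb{C}^*$-action $\omega_t=\exp(tc)$ is fixed, the decomposition is determined, so it suffices to exhibit the proposed locally closed strata, check that they partition $X$, and verify that each flows to the claimed fixed component in the correct direction. Since $X$ is complete, for every $x$ the limits $\lim_{\mathrm{Re}(t)\to -\infty}\omega_t(x)$ and $\lim_{\mathrm{Re}(t)\to +\infty}\omega_t(x)$ exist and lie in $X^{\mathbb{C}^*}$, and the plus (resp. minus) cell of a fixed component is precisely the set of points whose $\mathrm{Re}(t)\to -\infty$ (resp. $\mathrm{Re}(t)\to +\infty$) limit lands in that component. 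Thus the whole argument reduces to computing these limits stratum by stratum, and the two opposite contraction rates $e^{t}$ and $e^{-t}$ appearing in Lemmas \ref{Cstar1} and \ref{Cstar2} are exactly what feed the plus and minus decompositions. Note also that at $o$ one has $T_o(X)=\mathfrak{m}^+$, all of weight $+1$, so $X^+_0$ is open (the big cell $W$) and $X^-_0=\{o\}$, which fixes the sign conventions.

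First I would pin down the fixed locus. The base point $o$ is visibly fixed, and Lemma \ref{BBfix} shows each $N_i$ is fixed; every $N_i=K^{\mathbb{C}}/Q_i$ is connected, and $o$ together with the $N_i$ are pairwise disjoint since they sit in the distinct strata $W$ and $\mathcal{M}_{r-i+1}\setminus\mathcal{M}_{r-i}$. That these exhaust $X^{\mathbb{C}^*}$ will fall out of the stratum analysis, where the fixed points on each proposed cell turn out to be exactly its zero-section.

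For the plus cells, the telescoping identity $W\sqcup(\mathcal{M}_r\setminus\mathcal{M}_{r-1})\sqcup\cdots\sqcup\mathcal{M}_1=X$, using $W=X\setminus D$ and $\mathcal{M}_r=D$, gives $\bigsqcup_{i=0}^r X^+_i=X$. On $X^+_0=\exp(\mathfrak{m}^+).o$ one has $\omega_t\exp(v).o=\exp(e^t v).o\to o$ as $\mathrm{Re}(t)\to-\infty$, so $W$ flows to $(X^{\mathbb{C}^*})_0=\{o\}$. On $X^+_i=\mathcal{M}_{r-i+1}\setminus\mathcal{M}_{r-i}$ the fibration $\kappa_{r-i+1}$ of Proposition \ref{fiboverinfty}(2) maps onto $N_{r-(r-i+1)+1}=N_i$, and Lemma \ref{Cstar1} dilates each fiber by $e^t$ fixing its zero point in $N_i$; hence every point of $X^+_i$ flows to $N_i$ as $\mathrm{Re}(t)\to-\infty$, the fixed points here being exactly the zero-section $N_i$. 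Thus $X^+_i$ is contained in the genuine plus cell of $N_i$; as both the $X^+_i$ and the genuine plus cells partition $X$ and the inclusions respect the indexing, they coincide. The minus cells are symmetric: a polysphere argument parallel to the proof of $\mathcal{M}_r=D$ yields $\mathcal{V}_r=X$ (any point lies on some $k\mathcal{P}$ with $\mathcal{P}\cong(\mathbb{P}^1)^r$ the polysphere through $o$, is joined to $o$ inside $\mathcal{P}$ by a chain of at most $r$ minimal rational curves, and $\mathcal{V}_r$ is $K$-invariant), so $\bigsqcup_{i=0}^r X^-_i=\mathcal{V}_r=X$. On $X^-_i=\mathcal{V}_i\setminus\mathcal{V}_{i-1}$ the fibration $\nu_i$ of Proposition \ref{tubefib} lands in $N_i$ and Lemma \ref{Cstar2} dilates each fiber by $e^{-t}$ fixing the zero point in $N_i$, so $X^-_i$ flows to $N_i$ as $\mathrm{Re}(t)\to+\infty$, and equality with the genuine minus cells follows as before.

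The computations themselves are handed to us by Lemmas \ref{BBfix}, \ref{Cstar1}, and \ref{Cstar2}, so the main difficulty is organizational rather than computational: correctly matching the two opposite contraction directions ($e^t$ versus $e^{-t}$, i.e. $\mathrm{Re}(t)\to-\infty$ versus $\mathrm{Re}(t)\to+\infty$) to the plus and minus decompositions, together with the index shift $i\leftrightarrow r-i+1$ built into the $\kappa$-fibrations. The second genuine input is the exhaustion statements $\mathcal{M}_r=D$ (already established) and $\mathcal{V}_r=X$, which guarantee that the proposed strata actually cover $X$; without these the partition/uniqueness step cannot close, and indeed the identification of the full fixed locus as $\{o\}\sqcup N_1\sqcup\cdots\sqcup N_r$ also rests on knowing the strata cover $X$.
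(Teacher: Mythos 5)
Your proof is correct and follows essentially the same route as the paper, which simply verifies the conditions of Theorem \ref{BBdecom} directly from Propositions \ref{fiboverinfty}(2) and \ref{tubefib} and Lemmas \ref{BBfix}, \ref{Cstar1}, \ref{Cstar2}; you have merely written out the limit computations, the sign/index bookkeeping, and the uniqueness step in full. The one detail you supply that the paper leaves implicit is the exhaustion $\mathcal{V}_r=X$ needed for the minus cells, and your polysphere argument for it (parallel to the paper's proof that $\mathcal{M}_r=D$) is valid.
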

\begin{proof}
	All conditions can be checked directly from Propositions \ref{fiboverinfty}(2), \ref{tubefib} and Lemmas \ref{BBfix}, \ref{Cstar1}, \ref{Cstar2}. 
\end{proof}

$N_r$ and $\{o\}$ are called the sink and source of the $\mathbb{C}^*$-action. Sometimes we also call them the extremal fixed point components of the action. Recall that in Proposition \ref{inftypart} we know when $X$ is of tube type, $N_{i}\cong N_{r-i+1}$ and $N_r$ is a point. For simplicity we will call $N_i$ the $i$-th fixed component in the Bia{\l}ynicki-Birula decomposition.

\section{Proof of the Main theorem}\label{birtrans}
\subsection{Some lemmas}
We first give some lemmas.
\begin{lemma}\label{degreevmrt}
	For $1\leq k\leq r-1$,
	the ideal of $\mathcal{C}_o^k(X)\subset \mathbb{P}T_o(X)$ is generated by degree $k+1$ polynomials.
\end{lemma}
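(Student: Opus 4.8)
The plan is to reduce the statement, via Lemma \ref{secantrank}, to the classical description of ideals of rank loci, and then to dispatch the few genuinely non-classical cases by hand. By Lemma \ref{secantrank} the variety $\mathcal{C}_o^k(X)$ is exactly the locus $\{[v]\in\mathbb{P}T_o(X):\operatorname{rank}(v)\le k\}$. Using the matrix (resp. Jordan-algebra) models of $T_o(X)$ recalled before and inside the proof of Lemma \ref{secantrank}, this identifies $\mathcal{C}_o^k(X)$ with a determinantal-type variety: for $G(p,q)$ it is the variety of $p\times q$ matrices of rank $\le k$, for $G^{III}(n,n)$ the symmetric matrices of rank $\le k$, for $G^{II}(n,n)$ the skew-symmetric matrices of rank $\le 2k$, and for $E_7/P_7$ the rank $\le k$ locus in $J_3(\mathbb{O})$. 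In each model the natural equations are visibly of degree $k+1$ --- the $(k+1)\times(k+1)$ minors, the $(2k+2)\times(2k+2)$ sub-Pfaffians (which have degree $k+1$), or the size-$(k+1)$ sub-determinants of the octonionic Hermitian matrix --- which is exactly the degree appearing in the Landsberg--Manivel map $\varphi$.

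For the three classical families this is then a citation. First I would invoke the standard theorems that the ideal of the generic determinantal variety is generated by its $(k+1)$-minors, that the ideal of the symmetric determinantal variety is generated by its $(k+1)$-minors, and that the ideal of the skew-symmetric (Pfaffian) variety is generated by its $(2k+2)$-Pfaffians (e.g. Bruns--Vetter, J\'{o}zefiak--Pragacz). In all three statements the generators sit in the single degree $k+1$, so generation in degree $k+1$ is immediate once the rank identification is in place.

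The remaining cases have small rank and only a handful of values of $k$. For $Q^n$ and $\mathbb{OP}^2$ the rank is $2$, so only $k=1$ occurs and I must check that the VMRT itself is cut out by quadrics: $Q^{n-2}$ is a single quadric, and $G^{II}(5,5)$ is the spinor variety, classically defined by quadrics. For $E_7/P_7$ the rank is $3$, so $k\in\{1,2\}$; when $k=1$ the VMRT is the Cayley plane $\mathbb{OP}^2$, cut out by the quadrics coming from the entries of the adjugate of the generic element of $J_3(\mathbb{O})$, and when $k=2$ the secant variety $\mathcal{C}_o^2(E_7/P_7)$ is the cubic hypersurface defined by the Jordan determinant, whose ideal is generated by that single cubic of degree $3=k+1$. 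I would also note that no generators of lower degree can occur, i.e. $\mathcal{C}_o^k(X)$ lies on no hypersurface of degree $\le k$; in the classical families this is part of the cited determinantal theorems, and in the exceptional cases it follows from the non-degeneracy of the VMRT in $\mathbb{P}T_o(X)$ together with the explicit equations just listed.

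The hard part will be exactly this exceptional input --- the quadric-generation for the spinor variety and the Cayley plane, and the identification of $\mathcal{C}_o^2(E_7/P_7)$ with a cubic hypersurface --- since it does not follow from the generic determinantal formalism and must be imported from the theory of these specific varieties. A tempting uniform alternative is to run an induction on the rank using Proposition \ref{subvmrt}, which realizes the VMRT secants of a smooth Schubert subvariety as linear sections of those of $X$; the obstacle there is that generation in a fixed degree is not automatically inherited by a scheme-theoretic linear section, so one would have to control reducedness and the absence of new low-degree equations. For this reason I would keep the case-by-case analysis as the backbone of the proof.
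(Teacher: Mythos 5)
Your proposal follows essentially the same route as the paper's proof: both identify $\mathcal{C}_o^k(X)$ with rank loci via Lemma \ref{secantrank} and the matrix (resp.\ Jordan algebra) models, then list the degree-$(k+1)$ minors, sub-Pfaffians, and the Jordan cubic case by case, with the quadric-generation of $Q^{n-2}$, $G^{II}(5,5)$ and $\mathbb{OP}^2$ handled as the exceptional input. Your version is in fact slightly more careful than the paper's, which asserts the generators without explicitly distinguishing set-theoretic equations from ideal-theoretic generation; your citation of the classical determinantal and Pfaffian ideal theorems (Bruns--Vetter, J\'ozefiak--Pragacz) fills exactly the gap the paper dismisses as ``easily checked.''
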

\begin{proof}
	This can be easily checked case by case.  When $X=G(p,q)$, we know $\mathcal{C}_o^k(G(p,q))=\{[A]\in \mathbb{P}(\mathbb{C}^p\otimes \mathbb{C}^q), A\in M(p,q,\mathbb{C}), \ \text{rank}(A)\leq k \}$. The defining equations are given by all $(k+1)\times (k+1)$ minors of the matrix. When $X=G^{II}(p,p)(p\geq 4)$, we know $\mathcal{C}_o^k(G^{II}(p,p))=\{[A]\in \mathbb{P}(\bigwedge^2\mathbb{C}^p), A\in M(p,p,\mathbb{C}) \ \text{is anti-symmetric}, \ \text{rank}(A)\leq 2k \}$. The defining equations are given by pfaffians of $(2k+2) \times (2k+2)$ anti-symmetric principal submatrices. When $X=G^{III}(p,p)(p\geq 3)$, $\mathcal{C}_o^k(X)=\{[A]\in \mathbb{P}(Sym^2\mathbb{C}^p), A\in M(p,p,\mathbb{C}) \ \text{is symmetric}, \ \text{rank}(A)\leq k \}$. The defining equations are given by  determinants of $(k+1) \times (k+1)$ anti-symmetric principal submatrices. For $Q^n$ and $\mathbb{OP}^2$, their VMRTs are $Q^{n-2}\subset \mathbb{P}^{n-1}$ and $G^{II}(5,5)\subset \mathbb{P}^{15}$, both of which are defined by quadratic polynomials. For $E_7/P_7$, its VMRT is the Cayley plane $\mathbb{OP}^2\subset \mathbb{P}^{26}$ and the secant is a  cubic hypersurface.
\end{proof}
Therefore $\varphi: \mathbb{P}^n\dashrightarrow \mathbb{P}^N$ can be written as
\[\begin{aligned}
\varphi([x_0,x_1,\cdots, x_n])&=[x^r_0,x_0^{r-1}x_1,\cdots,x_0^{r-1}x_n,x^{r-2}_0\mathcal{I}(\mathcal{C}_o^1(X)),\cdots,x_0^{r-k-1}\mathcal{I}(\mathcal{C}_o^{k}(X)),\cdots, \mathcal{I}(\mathcal{C}_o^{r-1}(X))]
\end{aligned}\]
\begin{lemma}\label{nkcoor}
The rational map 
	\[\varphi_k: [x_0, x_1,\cdots ,x_n]\in \mathbb{P}^{n}\rightarrow [0,\cdots,\mathcal{I}(\mathcal{C}_o^{k-1}(X)),\cdots,0]\in \mathbb{P}^N\]
	satisfies that $\varphi_k(\mathcal{C}_o^{k}(X))=N_k$ under the identification 	$\mathcal{C}_o^k(X)\subset \mathbb{P}T_o(X)\cong \mathbb{P}^{n-1}=\{x_0=0\}\subset \mathbb{P}^n$. Moreover the map on $\mathcal{C}_o^k(X)\backslash \mathcal{C}_o^{k-1}(X)$ is exactly the map $\theta_k$ given in Proposition \ref{fiboverinfty}.
\end{lemma}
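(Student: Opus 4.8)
The plan is to realize every point of $\mathcal{C}_o^k(X)\setminus\mathcal{C}_o^{k-1}(X)$ by a rank-$k$ tangent vector and to read off $\varphi_k$ as the leading term of $\varphi$ along the associated affine line as it runs to infinity. Recall that on the affine cell $W=\{x_0\neq 0\}$ the map $\varphi$ is precisely the minimal equivariant embedding expressed in the Harish-Chandra chart: setting $x_0=1$, a point $\exp(v).o$ with Harish-Chandra coordinate $v=(x_1,\dots,x_n)$ is sent to the tuple whose weight-$j$ block (for $0\leq j\leq r$) is $\mathcal{I}(\mathcal{C}_o^{j-1}(X))(x)$, the degree-$j$ generators of the ideal of $\mathcal{C}_o^{j-1}(X)$ supplied by Lemma \ref{degreevmrt}. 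Under $\omega_t=\exp(tc)$ one has $\omega_t\exp(v).o=\exp(e^tv).o$, so this weight-$j$ block is genuinely homogeneous of weight $j$. First I would therefore fix $[v]$ with $\mathrm{rank}(v)=k$, which by Lemma \ref{secantrank} is exactly a point of $\mathcal{C}_o^k(X)\setminus\mathcal{C}_o^{k-1}(X)$, and recall from Proposition \ref{fiboverinfty} that $\theta_k([v])=\lim_{t\to\infty}\exp(tv).o$, the infinity point of the line $\{\exp(tv).o\}$.

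The heart of the argument is then to compute this limit in the homogeneous coordinates of $\mathbb{P}^N$. Since $\exp(tv).o$ has Harish-Chandra coordinate $tv$ and the weight-$j$ block is degree-$j$ homogeneous, that block equals $\mathcal{I}(\mathcal{C}_o^{j-1}(X))(tv)=t^{j}\,\mathcal{I}(\mathcal{C}_o^{j-1}(X))(v)$. Because the homogeneous ideal $I(\mathcal{C}_o^{j-1}(X))$ is the radical ideal of the variety and, by Lemma \ref{degreevmrt}, is generated by the degree-$j$ polynomials $\mathcal{I}(\mathcal{C}_o^{j-1}(X))$, the block $\mathcal{I}(\mathcal{C}_o^{j-1}(X))(v)$ vanishes if and only if $[v]\in\mathcal{C}_o^{j-1}(X)$. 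Since $[v]\in\mathcal{C}_o^k(X)\subseteq\mathcal{C}_o^{j-1}(X)$ whenever $j\geq k+1$, all blocks of weight strictly above $k$ are identically zero, whereas the weight-$k$ block $\mathcal{I}(\mathcal{C}_o^{k-1}(X))(v)$ is nonzero because $[v]\notin\mathcal{C}_o^{k-1}(X)$. Dividing through by $t^{k}$ and letting $t\to\infty$, each lower block picks up a factor $t^{j-k}\to 0$, so the limit is the point whose only nonzero entries constitute the weight-$k$ block $\mathcal{I}(\mathcal{C}_o^{k-1}(X))(v)$.

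This is exactly what is needed. On one hand the limit is $\theta_k([v])\in N_k$; on the other hand it equals $\varphi_k([v])=[0,\dots,\mathcal{I}(\mathcal{C}_o^{k-1}(X))(v),\dots,0]$ by the definition of $\varphi_k$, where $\{x_0=0\}$ automatically kills every block carrying a positive power of $x_0$. Hence $\varphi_k$ restricted to $\mathcal{C}_o^k(X)\setminus\mathcal{C}_o^{k-1}(X)$ coincides with $\theta_k$, which settles the ``moreover'' clause, and its image is $\theta_k\bigl(\mathcal{C}_o^k(X)\setminus\mathcal{C}_o^{k-1}(X)\bigr)=N_k$ by surjectivity of the bundle projection $\theta_k$. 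Since $\varphi_k$ is undefined exactly along $\mathcal{C}_o^{k-1}(X)$ and $N_k\cong K^{\mathbb{C}}/Q_k$ is a complete homogeneous variety, the image $\varphi_k(\mathcal{C}_o^k(X))$ is the closure $\overline{N_k}=N_k$, giving the first assertion.

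The main obstacle I anticipate is bookkeeping rather than conceptual: one must be sure that the weight-$j$ block of $\varphi$ is the \emph{full} set of degree-$j$ generators of the ideal of $\mathcal{C}_o^{j-1}(X)$ and that these cut it out set-theoretically, so that the vanishing dichotomy ``block $=0\iff[v]\in\mathcal{C}_o^{j-1}(X)$'' is exact; this relies on Lemma \ref{degreevmrt} together with the fact that the ideal in question is the radical ideal of the variety. A secondary point to verify is that the limit computed in $\mathbb{P}^N$ really is the infinity point of the curve inside $X$, which holds because $X\hookrightarrow\mathbb{P}^N$ is a closed embedding and $\varphi|_W$ is precisely this embedding read in the Harish-Chandra chart.
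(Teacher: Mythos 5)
Your proposal is correct and follows essentially the same route as the paper: fix a rank-$k$ tangent $v$, push the affine line $\{[1,t\mathbf{x}]\}$ through $\varphi$, use the degree-$(j)$ homogeneity of the generator blocks (with blocks of weight $>k$ vanishing since $[v]\in\mathcal{C}_o^k(X)$ and the weight-$k$ block nonvanishing by Lemma \ref{degreevmrt}) to compute the limit as $t\to\infty$, and identify that limit with the infinity point $\theta_k([v])\in N_k$. Your added care about the set-theoretic vanishing dichotomy and the closure argument via completeness of $N_k$ merely makes explicit what the paper leaves implicit.
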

\begin{proof}
	By the Theorem of Landsberg-Manivel (Theorem \ref{thmLM}) we know the Zariski closure of the image of $\varphi$ is isomorphic to $X$, for simplicity we also denote the closure by $X$. It is easy to observe that $\varphi$ maps the affine cell $\{x_0\neq 0 \}$ to the affine cell of $X$.
	Fix a rank $k$ tangent $v_k= [0,x_1,\cdots, x_n]\in \mathcal{C}_o^{k}(X)\backslash \mathcal{C}_o^{k-1}(X)$. We take the affine line $\{[1,tx_1,\cdots, tx_n], t\in \mathbb{C}\}$ then its image under $\varphi$ is \[\varphi([1,tx_1,\cdots, tx_n])=\{[1, t\mathbf{x}, \mathbf{F}_1(t\mathbf{x}), \cdots, \mathbf{F}_{k-1}(t\mathbf{x}),0,\cdots, 0],t\in \mathbb{C}\}\] where $\mathbf{F}_i$ denotes the vector-valued function given by the set of generators of the ideal  $\mathcal{I}(\mathcal{C}_o^{i}(X))$ and $\mathbf{x}=(x_1,\cdots x_n)$. Let $t \rightarrow \infty$. Note that $\mathbf{F}_i(t\mathbf{x})=t^{i+1}\mathbf{F}(\mathbf{x})$, we have 
	\[\begin{aligned}
	&[1, t\mathbf{x}, \mathbf{F}_1(t\mathbf{x}), \cdots, \mathbf{F}_{k-1}(t\mathbf{x}),0,\cdots, 0]=\\&[1, t\mathbf{x}, t^2\mathbf{F}_1(\mathbf{x}), \cdots, t^k\mathbf{F}_{k-1}(\mathbf{x}),0,\cdots, 0]\rightarrow [0,\cdots,\mathbf{F}_{k-1}(\mathbf{x}),\cdots,0]
	\end{aligned}\]
	Then by definition this is precisely a infinity point in $N_k$.
	\end{proof}

Let $\mathbb{P}_{k}=span<N_{k}>$ be the linear subspace contained in $\mathbb{P}^N$. Note that if $f\in \mathcal{I}(\mathcal{C}_o^{k-1}(X))$ is a generator, it can not be identically vanishing on $\mathcal{C}_o^{k}(X)$ by the degree restriction in Lemma \ref{degreevmrt}. Then we know $\dim(\mathbb{P}_{k})+1$ equals to the number of generators in $\mathcal{I}(\mathcal{C}_o^{k-1}(X))$ and moreover $\mathbb{P}_{k}$ can be identified with projectivization of a $K^{\mathbb{C}}$-module.

\begin{lemma}\label{inversionmatrix}
	When $X$ is of tube type with rank at least $3$, $\varphi_{r-1}|_{x_0=0}$ is a birational map onto $\mathbb{P}_{r-1}$.
\end{lemma}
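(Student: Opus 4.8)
The plan is to identify $\varphi_{r-1}|_{x_0=0}$ with the adjoint (adjugate) map attached to the Jordan-algebra structure on $T_o(X)$, and then to deduce birationality from the fact that this map is, up to the norm, an involution. Recall from Lemma \ref{secantrank} that, under the matrix identifications used there (general matrices, symmetric matrices, antisymmetric matrices, and $J_3(\mathbb{O})$ for $E_7/P_7$), the variety $\mathcal{C}_o^k(X)$ is exactly the locus of elements of rank $\le k$, and by Proposition \ref{tubehyper} the generic rank equals $r$ with $\mathcal{C}_o^{r-1}(X)$ the degree-$r$ norm hypersurface. By Lemma \ref{degreevmrt} the ideal $\mathcal{I}(\mathcal{C}_o^{r-2}(X))$ is generated in degree $r-1$; concretely these generators are the $(r-1)\times(r-1)$ minors (the entries of the classical adjugate) in the two determinantal cases, the entries of the Pfaffian-adjugate in the antisymmetric case, and the quadratic components of the adjoint $x^{\#}$ of the Albert algebra for $E_7/P_7$. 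First I would record, case by case, that the number of these minimal generators equals $n=\dim T_o(X)$, so that after a linear change of coordinates on $\mathbb{P}_{r-1}$ the map $\varphi_{r-1}|_{x_0=0}$ becomes the rational self-map
\[
\Phi:\ \mathbb{P}T_o(X)\ \dashrightarrow\ \mathbb{P}T_o(X),\qquad [x]\ \longmapsto\ [x^{\#}],
\]
where $x^{\#}$ is the adjoint of $x$, a homogeneous map of degree $r-1$ whose vanishing locus is precisely $\mathcal{C}_o^{r-2}(X)$.

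The birationality then follows from the single identity
\[
(x^{\#})^{\#}\ =\ N(x)^{r-2}\,x,
\]
where $N$ is the degree-$r$ norm (determinant, Pfaffian, or the cubic norm of $J_3(\mathbb{O})$) cutting out $\mathcal{C}_o^{r-1}(X)$. Indeed, on the dense open set $\{N\neq 0\}$ of full-rank vectors this shows $\Phi\circ\Phi=\mathrm{id}$, so $\Phi$ is a birational involution of $\mathbb{P}T_o(X)\cong\mathbb{P}^{n-1}$ that is its own rational inverse. In particular $\Phi$ is dominant, its image is by construction contained in $\mathbb{P}_{r-1}=\mathrm{span}\langle N_{r-1}\rangle$, and since $\dim\mathbb{P}_{r-1}=n-1=\dim\mathbb{P}T_o(X)$ by the generator count above, I conclude that $\Phi$ is birational onto $\mathbb{P}_{r-1}$. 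It is worth noting that this uses the generic full-rank points: on the proper subvariety $\mathcal{C}_o^{r-1}(X)$ the map collapses the fibres of $\theta_{r-1}$ onto $N_{r-1}$ (Lemma \ref{nkcoor}) and is highly non-injective, but that locus does not affect birationality.

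The main work, and the only genuine obstacle, is verifying the two Jordan-theoretic ingredients: that $\mathcal{I}(\mathcal{C}_o^{r-2}(X))$ is generated exactly by the $n$ adjoint components, and that the involution identity $(x^{\#})^{\#}=N(x)^{r-2}x$ holds. For the three classical families both are standard: the adjugate identity $\mathrm{adj}(\mathrm{adj}(A))=\det(A)^{n-2}A$ and its symmetric and Pfaffian analogues supply the involution, while the statement that the $(r-1)$-minors (resp. sub-Pfaffians) minimally cut out the rank-$\le(r-2)$ determinantal variety is classical. For $E_7/P_7$ one invokes the corresponding identities in $J_3(\mathbb{O})$, where $r=3$, the adjoint $x^{\#}$ is quadratic, and $(x^{\#})^{\#}=N(x)\,x$. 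Since the hypothesis $r\ge 3$ excludes $Q^n$ (for which $r=2$ and $x^{\#}$ degenerates to a linear map), the adjoint is genuinely nonlinear in every remaining case and the argument applies uniformly.
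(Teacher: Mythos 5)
Your proposal is correct and follows essentially the same route as the paper: the paper's proof also proceeds case by case (the two determinantal families, the Pfaffian case, and $J_3(\mathbb{O})$ for $E_7/P_7$), identifies $\dim\mathbb{P}_{r-1}=n-1$ via the count of degree-$(r-1)$ generators of $\mathcal{I}(\mathcal{C}_o^{r-2}(X))$, and recognizes $\varphi_{r-1}|_{\{x_0=0\}}$ as taking the inverse of the projectivized matrix, which is birational because inversion is an involution. Your adjugate formulation $[x]\mapsto[x^{\#}]$ with the identity $(x^{\#})^{\#}=N(x)^{r-2}x$ is just a slightly more explicit restatement of that same involution argument, since $[x^{\#}]=[x^{-1}]$ on the dense locus $\{N\neq 0\}$.
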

\begin{proof}
	We can check case by case. When $X=G(m,m)$, we know $\mathcal{C}_o^{r-2}(G(m,m))=\{[A]\in \mathbb{P}(\mathbb{C}^m\otimes \mathbb{C}^m), A\in M(m,m,\mathbb{C}), \ \text{rank}(A)\leq r-2 \}$. The defining equations are given by all $(r-1)\times (r-1)$ minors of the matrix. Then we know $\dim(\mathbb{P}_{r-1})=n-1=m^2-1$ and the rational map $\varphi_{r-1}|_{\{x_0=0\}}$ is to take the inverse of (the projectivization) of  $m\times m$ matrices if we identify $\{x_0=0\}=\mathbb{P}^{n-1}$ with the projectivized tangent space of $X$ at $o$. Similarly we know in the orthogonal Grassmannian $G^{II}(m,m)$ ($m$ is even) and Lagrangian Grassmannian  $G^{III}(m,m)$ case the rational map $\varphi_{r-1}|_{\{x_0=0\}}$ is to take the inverse of (the projectivization) of  $m\times m$ anti-symmetric and symmetric matrices respectively. When $X$ is $E_7/P_7$, also by a similar argument we know the rational map $\varphi_{r-1}|_{\{x_0=0\}}$ is to take the inverse of (the projectivization) of $3\times 3$ matrices in the Jordan algebra $J_3(\mathbb{O})$.
\end{proof}

\subsection{Stratifications as linear sections of $X$}
For later discussion, we fix some notations here. Let $\mathbb{P}_{I_j}=span<\mathbb{P}_{r},\cdots,\mathbb{P}_{r-j+1}>$ for $1\leq j\leq r$ and $\mathbb{P}_{I_{r+1}}=\mathbb{P}^N$. For simplicity we use $\mathbb{P}_0$ to denote tha point set $\{[1,0,\cdots,0]\}\subset \mathbb{P}^N$. Then let $\mathbb{P}_{J_j}=span<\mathbb{P}_{0},\mathbb{P}_1,\cdots,\mathbb{P}_{j}>$ for $0\leq j\leq r$.
Then we describe the stratifications $\mathcal{M}_1\subset \mathcal{M}_2\subset \cdots \subset \mathcal{M}_{r-1}\subset \mathcal{M}_r=D$ and $\mathcal{V}_1\subset \mathcal{V}_2 \subset \cdots \subset \mathcal{V}_{r-1}$ as follows.

\begin{lemma}\label{linearsec}
	$\mathcal{M}_k=\mathbb{P}_{I_k}\cap X (1\leq k\leq r)$ is a linear section of $X$. In particular, $D=\mathcal{M}_r$ is a hyperplane section of $X$.
\end{lemma}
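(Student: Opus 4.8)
The plan is to read everything off from the block structure of the homogeneous coordinates on $\mathbb{P}^N$ together with the $\mathbb{C}^*$-action $\omega_t$, rather than arguing directly with chains of minimal rational curves. First I would organize the coordinates of $\mathbb{P}^N$ into the blocks $\mathbb{P}_0,\mathbb{P}_1,\dots,\mathbb{P}_r$, where $\mathbb{P}_0=\{[1,0,\dots,0]\}$ is the coordinate $x_0^r$ and, for $1\le k\le r$, $\mathbb{P}_k$ is the span of the coordinates $x_0^{r-k}\,\mathcal{I}(\mathcal{C}_o^{k-1}(X))$. By Lemma \ref{nkcoor} these are exactly the blocks with $\mathbb{P}_k=\mathrm{span}\langle N_k\rangle$, and since the blocks are complementary coordinate subspaces we have $\mathbb{P}_{I_k}=\mathrm{span}\langle\mathbb{P}_r,\dots,\mathbb{P}_{r-k+1}\rangle=\{\text{all coordinates in }\mathbb{P}_0,\dots,\mathbb{P}_{r-k}\text{ vanish}\}$.

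Second, I would compute the weight of $\omega_t$ on each block. Since $[c,v]=v$ for $v\in\mathfrak{m}^+$, we have $\mathrm{Ad}(\omega_t)|_{\mathfrak{m}^+}=e^t\,\mathrm{id}$, so in the coordinates $[x_0,\dots,x_n]$ the action is $[x_0,x_1,\dots,x_n]\mapsto[x_0,e^tx_1,\dots,e^tx_n]$; consequently a monomial $x_0^{r-k}\cdot(\text{degree }k)$ is scaled by $e^{kt}$, i.e. $\omega_t$ acts on $\mathbb{P}_k$ with weight $k$. Thus $\mathbb{P}^N=\mathbb{P}(V_0\oplus\cdots\oplus V_r)$ is the decomposition into $\omega_t$-weight spaces with $\mathbb{P}(V_k)=\mathbb{P}_k$.

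Third, and this is the geometric heart, I would identify the Bia\l{}ynicki-Birula strata with the blocks. For $x\in X$ the limit $\lim_{t\to-\infty}\omega_t\cdot x$ is the projection of $x$ to its lowest nonvanishing weight block, say $\mathbb{P}_j$; since $X$ is closed and $\omega_t$-invariant this limit lies in $X\cap\mathbb{P}_j$, and because $X^{\mathbb{C}^*}=\bigsqcup_i N_i$ with $N_i\subset\mathbb{P}_i$ we get $X\cap\mathbb{P}_j=N_j$. Hence $X^+_j=\{x\in X:\text{the lowest nonvanishing block of }x\text{ is }\mathbb{P}_j\}$, the direction $t\to-\infty$ being the correct one because $X^+_0=W$ flows to $o\in N_0=\mathbb{P}_0$. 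Then, using the identification $\mathcal{M}_k=\bigcup_{j=r-k+1}^{r}X^+_j$ supplied by the proposition describing the decomposition (which telescopes from $X^+_r=\mathcal{M}_1$ and $X^+_j=\mathcal{M}_{r-j+1}\setminus\mathcal{M}_{r-j}$), this union is exactly $\{x\in X:\text{blocks }\mathbb{P}_0,\dots,\mathbb{P}_{r-k}\text{ vanish}\}=X\cap\mathbb{P}_{I_k}$; for $k=r$ only the single coordinate $x_0^r$ must vanish, giving the hyperplane section $D=\mathcal{M}_r$.

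The step I expect to be the main obstacle is the third one: pinning down that $\lim_{t\to-\infty}\omega_t\cdot x$ really lands in $N_j$ (equivalently $X\cap\mathbb{P}_j=N_j$) and fixing the sign of the flow so that the $+$ strata match the filtration $\mathcal{M}_1\subset\cdots\subset\mathcal{M}_r$ rather than its reverse. Everything else is bookkeeping with the weights, but here one genuinely needs both that the weight blocks are complementary coordinate subspaces and that the fixed components $N_i$ sit one per block, which is precisely what Lemma \ref{nkcoor} and the Bia\l{}ynicki-Birula proposition provide.
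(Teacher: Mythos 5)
Your proof is correct, but it follows a genuinely different route from the paper's. The paper proves the inclusion $\mathcal{M}_k\subset\mathbb{P}_{I_k}\cap X$ by induction on $k$, using only Proposition \ref{fiboverinfty}(2) and the fact that minimal rational curves are lines in the minimal embedding: a chain of mrcs in a characteristic symmetric subspace $Z^x_j$ emanating from $x\in N_{r-j}$ always connects to the compactifying divisor $D^{Z^x_j}\subset\mathcal{M}_j$, so by induction on the chain length everything stays in $span\langle x,\mathcal{M}_j\rangle\subset\mathbb{P}_{I_{j+1}}$; the reverse inclusion comes from noting that any $y\in\mathcal{M}_{\ell+1}\backslash\mathcal{M}_\ell$ projects from $\mathbb{P}_{I_\ell}$ to the zero point of its affine cell in $N_{r-\ell}$, hence cannot lie in $\mathbb{P}_{I_k}$. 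You instead read the lemma off the weight decomposition $\mathbb{P}^N=\mathbb{P}(V_0\oplus\cdots\oplus V_r)$ under $\omega_t$, the identification $X\cap\mathbb{P}_j=N_j$, and the Bia{\l}ynicki-Birula proposition of Section \ref{BBIHSS} — which is legitimate and non-circular, since that proposition rests only on Propositions \ref{fiboverinfty}(2), \ref{tubefib} and Lemmas \ref{BBfix}, \ref{Cstar1}, \ref{Cstar2}; in fact this is exactly the argument the paper only sketches in the Remark following Lemma \ref{linearsec2} ("This explains Lemma \ref{linearsec} and Lemma \ref{linearsec2}"), and you have turned it into a complete proof, including the correct verification of the flow direction against $X^+_0=W$ and Lemma \ref{Cstar1}. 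Your version is shorter, proves Lemma \ref{linearsec2} simultaneously by the symmetric $t\to+\infty$ argument with the minus cells, and makes the hyperplane statement for $D=\mathcal{M}_r$ transparent (only the single coordinate $x_0^r$ must vanish); the paper's inductive span argument buys self-containedness at that point of the text and, more importantly, the finer projective-geometric facts (the affine cells being disjoint from $\mathbb{P}_{I_j}$, and $\mathbb{P}^{x}=x+\mathbb{P}_{J_{r-j}}\cap\mathbb{P}^x$-type span identities) that are reused in the proof of Proposition \ref{contraction}. One point you gloss over: Lemma \ref{nkcoor} only places $N_k$ \emph{inside} the $k$-th coordinate block, while your bookkeeping needs $\mathbb{P}_k=span\langle N_k\rangle$ to be the \emph{full} block; this holds because no nonzero degree-$k$ linear combination of the generators of $\mathcal{I}(\mathcal{C}_o^{k-1}(X))$ can vanish identically on $\mathcal{C}_o^k(X)$ by the degree restriction of Lemma \ref{degreevmrt} — the nondegeneracy the paper records just before Lemma \ref{inversionmatrix} — so it is available, but it deserves an explicit citation rather than being attributed to Lemma \ref{nkcoor} alone.
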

\begin{proof}
	We first prove $\mathcal{M}_k\subset \mathbb{P}_{I_k}\cap X$ by induction. $\mathcal{M}_1\subset \mathbb{P}_{I_1}\cap X$ is clear. Now suppose that $\mathcal{M}_{j}\subset \mathbb{P}_{I_j}\cap X$. From Proposition \ref{fiboverinfty}(2) we know $N_{r-j}\subset \mathcal{M}_{j+1}$ and $\mathcal{M}_{j+1}\backslash \mathcal{M}_{j}$ is a homogeneous bundle over $N_{r-j}$ whose fiber is the affine cell of a Hermitian characteristic symmetric subspace of rank $j$. Fix a point $x \in N_{r-j}$ and denote the corresponding Hermitian characteristic symmetric subspace by $Z_{j}^x$. The zero point in the Harish-Chandra coordinate of $Z_j^x$ is $x$. Let $\mathcal{V}^{Z^x_j}_s$ be the $s$-th locus of chains of minimal rational curves in $Z^x_j$ starting from $x$, i.e. we inductively define $\mathcal{V}^{Z^x_j}_{0}=\{x\}$ and
   \[\mathcal{V}^{Z^x_j}_{s}=\overline{\bigcup\{\text{mrcs in}\  Z^x_j \ \text{passing} \ \mathcal{V}^{Z^x_j}_{s-1}\}}.\] 
	Let $D^{Z^x_j}$ be the compactifying divisor of $Z^x_j$ and it is contained in $\mathcal{M}_{j}\subset \mathbb{P}_{I_j}\cap X$ by inductive  assumption. Also we know any minimal rational curve in $Z^x_j$ passing $\mathcal{V}^{Z^x_j}_{s-1}$ is a line in $\mathbb{P}^N$ connecting $\mathcal{V}^{Z^x_j}_{s-1}$ and $D^{Z^x_j}$. Then by simple induction on $s$ we know $\mathcal{V}^{Z^x_j}_{s}\subset span<x,\mathcal{M}_j>\subset \mathbb{P}_{I_{j+1}}$ and moreover in the same induction we can obtain that 
	the affine cell is disjoint from $\mathbb{P}_{I_j}$. By varying $x\in N_{r-j}$  we know $\mathcal{M}_{j+1}\subset \mathbb{P}_{I_{j+1}}\cap X$. Therefore $\mathcal{M}_{k}\subset  \mathbb{P}_{I_{k}}\cap X$ for all $1\leq k\leq r$. Also from the proof we can see that for any point $y\in \mathcal{M}_{\ell+1}\backslash \mathcal{M}_{\ell} (\ell\geq k)$, it can be projected to a point in $N_{r-\ell}$ (namely, the zero point of the corresponding affine cell given by Harish-Chandra embedding) from $\mathbb{P}_{I_{\ell}}$. Then $y\notin \mathbb{P}_{I_{k}}\cap X$ and hence $\mathcal{M}_{k}= \mathbb{P}_{I_{k}}\cap X$ for all $1\leq k\leq r$.
\end{proof}

Similarly we can prove that 
\begin{lemma}\label{linearsec2}
	$\mathcal{V}_k=\mathbb{P}_{J_k}\cap X (0\leq k\leq r-1)$ is a linear section of $X$. 
\end{lemma}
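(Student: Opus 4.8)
The plan is to run exactly the argument of Lemma \ref{linearsec}, but ``from the source $o$'' instead of ``from the sink $N_r$''. The first step is to read off $\mathbb{P}_{J_k}$ from the block structure of the coordinates of $\varphi$. By Lemma \ref{nkcoor} and the remark following it, $\mathbb{P}_m=\mathrm{span}\langle N_m\rangle$ is precisely the $m$-th coordinate block $x_0^{r-m}\mathcal{I}(\mathcal{C}_o^{m-1}(X))$, so $\mathbb{P}_{J_k}=\mathrm{span}\langle\mathbb{P}_0,\dots,\mathbb{P}_k\rangle$ is the coordinate subspace cut out by the vanishing of the blocks $\mathbb{P}_{k+1},\dots,\mathbb{P}_r$. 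On the affine cell $W=\{x_0\neq 0\}$ one has $\varphi([1,\mathbf{x}])=[1,\mathbf{x},\mathbf{F}_1(\mathbf{x}),\dots,\mathbf{F}_{r-1}(\mathbf{x})]$ with $\mathbf{F}_m$ the generators of $\mathcal{I}(\mathcal{C}_o^m(X))$, so $\mathbb{P}_{J_k}\cap X\cap W$ is cut out by $\mathbf{F}_k(\mathbf{x})=\dots=\mathbf{F}_{r-1}(\mathbf{x})=0$. Using the chain $\mathcal{C}_o^k(X)\subset\mathcal{C}_o^{k+1}(X)\subset\cdots$ together with Lemma \ref{degreevmrt}, the single equation $\mathbf{F}_k(\mathbf{x})=0$, i.e. $[\mathbf{x}]\in\mathcal{C}_o^k(X)$, already forces the remaining ones, so by Lemma \ref{secantrank} this affine piece equals $\{\exp(v).o:\operatorname{rank}(v)\leq k\}$.

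Next I would match this with $\mathcal{V}_k$ over $W$. By Proposition \ref{tubefib}, $\mathcal{V}_k$ is the variety swept out by the rank $k$ balanced subspaces through $o$, and since the tangent spaces of these subspaces are exactly the $K$-translates of $\mathfrak{u}_k^+$, Proposition-Definition \ref{HSS-normal} shows their affine cells exhaust precisely $\{\exp(v).o:\operatorname{rank}(v)\leq k\}$. Hence $\mathcal{V}_k\cap W=\mathbb{P}_{J_k}\cap X\cap W$. Because the affine cell of each balanced subspace is dense in it, the union of these cells is dense in $\mathcal{V}_k$, so $\mathcal{V}_k=\overline{\mathcal{V}_k\cap W}$, and passing to closures gives the inclusion $\mathcal{V}_k\subseteq\mathbb{P}_{J_k}\cap X$. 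Staying closer to Lemma \ref{linearsec}, this inclusion can instead be proved by induction on $k$, starting from $\mathcal{V}_0=\{o\}=\mathbb{P}_{J_0}\cap X$: writing $\mathcal{V}_{j+1}\setminus\mathcal{V}_j$ as the bundle of balanced-subspace affine cells over $N_{j+1}$ from Proposition \ref{tubefib}, each such cell is a union of chains of minimal rational curves (lines in $\mathbb{P}^N$) based at a point of $N_{j+1}$ and chaining into a boundary locus contained in $\mathcal{V}_j$, whence $\mathcal{V}_{j+1}\subseteq\mathrm{span}\langle N_{j+1},\mathcal{V}_j\rangle\subseteq\mathbb{P}_{J_{j+1}}$.

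For the reverse inclusion I would argue as in the second half of Lemma \ref{linearsec}, using the Bia{\l}ynicki-Birula decomposition $X=\bigsqcup_i(\mathcal{V}_i\setminus\mathcal{V}_{i-1})$. It suffices to check that for every $\ell\geq k$ the stratum $\mathcal{V}_{\ell+1}\setminus\mathcal{V}_\ell$ misses $\mathbb{P}_{J_k}$: the map $\nu_{\ell+1}=\gamma^-_{\ell+1}$ is the limit of the $\mathbb{C}^*$-flow $\omega_t$ (Lemma \ref{Cstar2}, Lemma \ref{BBfix}) and sends any point of this stratum to $N_{\ell+1}\subset\mathbb{P}_{\ell+1}$; by the limit computation of Lemma \ref{nkcoor} the surviving coordinates of this limit lie in block $\ell+1$ and are nonzero, so the original point must already have a nonvanishing coordinate in block $\ell+1$. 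Since $\ell+1>k$, that block is not among $\mathbb{P}_0,\dots,\mathbb{P}_k$, and the point cannot lie in $\mathbb{P}_{J_k}$. Combined with the inclusion above, every point of $\mathbb{P}_{J_k}\cap X$ then lies in $\mathcal{V}_k$, giving equality for $0\leq k\leq r-1$.

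The step I expect to be the real obstacle is the inductive identification used in the synthetic version of the inclusion, namely pinning down the ``boundary locus in $\mathcal{V}_j$'' played by each rank $(j+1)$ balanced-subspace fiber. In the $\mathcal{M}$-case of Lemma \ref{linearsec} the compactifying divisor $D^{Z_j^x}$ of the fiber lies in $\mathcal{M}_j$ by construction, but here the roles of origin and infinity relative to the global $\mathbb{C}^*$-action are reversed (compare the dilation rate $e^{-t}$ of Lemma \ref{Cstar2} with $e^{t}$ of Lemma \ref{Cstar1}), so $o$ now sits at the infinity of each fiber rather than at its origin and the ``chaining into $\mathcal{V}_j$'' is less transparent. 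The cleanest way around this is to lean on the rank characterization of the first two paragraphs, which identifies $\mathcal{V}_k\cap W$ with the locus $\{\operatorname{rank}\leq k\}$ directly via Lemma \ref{secantrank} and Proposition-Definition \ref{HSS-normal}, making the closure argument self-contained and avoiding the delicate synthetic recursion altogether.
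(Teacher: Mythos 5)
Your proposal is correct, but it takes a genuinely different route from the paper's own proof. The paper establishes the inclusion $\mathcal{V}_k\subset \mathbb{P}_{J_k}\cap X$ by the same synthetic induction as Lemma~\ref{linearsec}: each fiber of $\nu_{j+1}$ is the affine cell of a balanced subspace $W^x_{j+1}$ whose compactifying divisor $D^{W^x_{j+1}}$ lies in $\mathcal{V}_j$ by induction, and since minimal rational curves are lines in $\mathbb{P}^N$ chaining $\mathcal{V}^{W^x_{j+1}}_{s-1}$ to $D^{W^x_{j+1}}$, one gets $\mathcal{V}^{W^x_{j+1}}_s\subset \mathrm{span}\langle x,\mathcal{V}_j\rangle\subset\mathbb{P}_{J_{j+1}}$; the reverse inclusion is the projection-from-$\mathbb{P}_{J_\ell}$ observation, which your $\mathbb{C}^*$-limit argument via Lemma~\ref{Cstar2} reproduces in equivalent form (the limit of the weighted flow picks out the top nonzero coordinate block, which must be block $\ell+1$). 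Your forward inclusion is instead coordinate-theoretic: you identify both $\mathcal{V}_k\cap W$ (via the sweep statement of Proposition~\ref{tubefib} and Proposition-Definition~\ref{HSS-normal}) and $\mathbb{P}_{J_k}\cap X\cap W$ (via Lemmas~\ref{degreevmrt} and~\ref{secantrank}, using that vanishing of the degree-$(k+1)$ generators $\mathbf{F}_k$ cuts out $\mathcal{C}_o^k(X)$ and forces the higher blocks to vanish) with the rank $\leq k$ locus $\{\exp(v).o:\operatorname{rank}(v)\leq k\}$, then take closures. In effect you have expanded into a complete proof the Remark that the paper places immediately after the two lemmas, where the weighted $\mathbb{C}^*$-action on $\mathbb{P}^N$, the equivariance of the embedding, and the Bia{\l}ynicki-Birula decomposition of $\mathbb{P}^N$ are said to ``explain'' Lemmas~\ref{linearsec} and~\ref{linearsec2}. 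What your route buys is that it sidesteps the one step you rightly identify as delicate in the synthetic induction; what it costs is heavier reliance on coordinates (that $\mathbb{P}_m$ is the full $m$-th coordinate block, the case-checked generation statement of Lemma~\ref{degreevmrt}, and the equivariance from the Remark, which follows from homogeneity of the $\mathbf{F}_m$ on the dense cell). Two small points to make explicit in your write-up: first, the identification $S\cap W=\exp(T_o S).o$ for a balanced subspace $S$ through $o$, together with $\mathbb{P}T_o(S)\subset\mathcal{C}_o^k(X)$ from Proposition~\ref{subvmrt}, is what justifies ``their affine cells exhaust precisely the rank $\leq k$ locus''; second, the step you flag as the real obstacle in the paper's induction is in fact sound, since in the polysphere picture a point of $W^x_{j+1}$ outside the affine cell centered at $x$ has at least one vanishing coordinate among the first $j+1$, hence lies in $\mathcal{V}_j$, so $D^{W^x_{j+1}}\subset\mathcal{V}_j$ as the paper asserts.
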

\begin{proof}
	The proof is similar to the proof of Lemma \ref{linearsec}. We first prove $\mathcal{V}_k\subset \mathbb{P}_{J_k}\cap X$ by induction.
	We know $\mathcal{V}_0=\{o\}=\{[1,0\cdots,0]\}$. Now suppose that $\mathcal{V}_j\subset \mathbb{P}_{J_j}\cap X$. From Proposition \ref{tubefib} we know $N_{j+1}\subset \mathcal{V}_{j+1}$ and $\mathcal{V}_{j+1}\backslash \mathcal{V}_{j}$ is a homogeneous bundle over $N_{j+1}$ whose fiber is the affine cell of a balanced subspace of rank $j+1$. Fix a point $x \in N_{j+1}$ and denote the corresponding balanced subspace by $W_{j+1}^x$. The zero point of $W_{j+1}^x$ in Harish-Chandra coordinate is $x$. Let $\mathcal{V}^{W^x_{j+1}}_s$ be the $s$-th locus of chains of minimal rational curves in $W^x_{j+1}$ starting from $x$, i.e. we inductively define $\mathcal{V}^{W^x_{j+1}}_{0}=\{x\}$ and
	\[\mathcal{V}^{W^x_{j+1}}_{s}=\overline{\bigcup\{\text{mrcs in}\  W^x_{j+1} \ \text{passing} \ \mathcal{V}^{W^x_{j+1}}_{s-1}\}}.\] 
	Let $D^{W^x_{j+1}}$ the compactitying divisor of $W^x_{j+1}$ with respect to this affine cell (as a fiber in the homogeneous bundle $\mathcal{V}_{j+1}\backslash \mathcal{V}_{j} \rightarrow N_{j+1}$). We know it is contained in $\mathcal{V}_j\subset \mathbb{P}_{J_j}\cap X$ by inductive  assumption. Also we know any minimal rational curve in $W^x_{j+1}$ passing $\mathcal{V}^{W^x_{j+1}}_{s-1}$ is a line in $\mathbb{P}^N$ connecting $\mathcal{V}^{W^x_{j+1}}_{s-1}$ and $D^{W^x_{j+1}}$. Then by simple induction on $s$ we know $\mathcal{V}^{W^x_{j+1}}_{s}\subset span<x,\mathcal{V}_j>\subset \mathbb{P}_{J_{j+1}}$ and moreover in the same induction we can obtain that 
	the affine cell is disjoint from $\mathbb{P}_{J_j}$. By varying $x\in N_{r-j}$  we know $\mathcal{V}_{j+1}\subset \mathbb{P}_{J_{j+1}}\cap X$. Therefore $\mathcal{V}_{k}\subset  \mathbb{P}_{J_{k}}\cap X$ for all $0\leq k\leq r-1$. Also from the proof we can see that for any point $y\in \mathcal{V}_{\ell+1}\backslash \mathcal{V}_{\ell} (\ell\geq k)$, it can be projected to a point in $N_{\ell+1}$ (namely, the zero point of the corresponding affine cell given by Harish-Chandra embedding) from $\mathbb{P}_{J_{\ell}}$. Then $y\notin \mathbb{P}_{J_{k}}\cap X$ and hence $\mathcal{V}_{k}= \mathbb{P}_{J_{k}}\cap X$ for all $0\leq k\leq r-1$.
\end{proof}


\begin{remark}
Using the notations in Lemma \ref{nkcoor} we describe the $\mathbb{C}^*$-orbit in $X$ whose closure passes $o=[1,0,\cdots, 0]\in \mathbb{P}^N$ as follows. For a fixed rank $k$ tangent $v_k=[0,x_1,\cdots, x_n]\in \mathcal{C}_o^k(X)\backslash \mathcal{C}_o^{k-1}(X)$, the corresponding $\mathbb{C}^*$-orbit is given by 
\[
\{[1, t\mathbf{x}, t^2\mathbf{F}_1(\mathbf{x}), \cdots, t^k\mathbf{F}_{k-1}(\mathbf{x}),0,\cdots, 0], t\in \mathbb{C}\backslash \{0\}\}
\]
whose closure is a rational curve of degree $k$ in $\mathbb{P}^N$. If we consider the $\mathbb{C}^*$-action on $\mathbb{P}^N$ given by $t.[z, \mathbf{z}_0,\mathbf{z}_1\cdots, \mathbf{z}_{r-1}]=[z, t\mathbf{z}_0\cdots,  t^r\mathbf{z}_{r-1}]$ where $[\mathbf{z}_{k-1}]\in \mathbb{P}^{p_k-1}$
we can see the embedding of $X$ in $\mathbb{P}^N$ is $\mathbb{C}^*$-equivariant with respect to the actions on $X$ and $\mathbb{P}^N$.

It is easy to write down the Bia{\l}ynicki-Birula decomposition for the $\mathbb{C}^{*}$-action on $\mathbb{P}^N$. We can see the fixed point components are 
$(\mathbb{P}^N)^{\mathbb{C}^*}_i=\mathbb{P}_i(0\leq i\leq r)$ and $(\mathbb{P}^N)^+_0=\mathbb{P}^N\backslash \mathbb{P}_{I_r}, (\mathbb{P}^N)^+_i=\mathbb{P}_{I_{r-i+1}}\backslash \mathbb{P}_{I_{r-i}} (1\leq i\leq r-1), (\mathbb{P}^N)^+_r=\mathbb{P}_{r}$; $(\mathbb{P}^N)^-_0=\mathbb{P}_0, (\mathbb{P}^N)^-_i=\mathbb{P}_{J_i}\backslash\mathbb{P}_{J_{i-1}} (1\leq i\leq r)$. This explains Lemma \ref{linearsec} and Lemma \ref{linearsec2}.
\end{remark}

\subsection{Proof of the Main Theorem}
Let $\text{Bl}'_kX$ denote the $k$-th successive blow-up of $X$ along (strict transforms) of $\mathcal{V}_0=\{o\}\subset \mathcal{V}_1\subset  \cdots \subset \mathcal{V}_{k-1} (1\leq k\leq r-1)$. Let $\mathcal{T}_1\cong \mathbb{P}T_oX$ be the exceptional divisor in the first blow-up and $\mathcal{T}_k$ be the strict transform of $\mathcal{T}_1$ in the next successive blow-ups. Let $\text{Bl}'_k\mathbb{P}^N$ denote the $k$-th successive blow-up of $X$ along (strict transforms) of $\mathbb{P}_{J_0}\subset \mathbb{P}_{J_1}\subset \cdots \subset \mathbb{P}_{J_{k-1}}$. From Lemma \ref{linearsec2} we know $\text{Bl}'_kX$ can be naturally embedded into $\text{Bl}'_k\mathbb{P}^N$. Then we have $\mathcal{T}_{k}\subset \text{Bl}'_{k}X \subset \text{Bl}'_k\mathbb{P}^N$. Let $p'_k:\mathbb{P}^N \dashrightarrow \mathbb{P}_{I_{r-k+1}}$ be the projection map. We know the $k$-th successive blow-up on $\mathbb{P}^N$ resolves $p'_k$. The lifted morphism is denoted by $\tilde{p'_k}: \text{Bl}'_k\mathbb{P}^N \rightarrow \mathbb{P}_{I_{r-k+1}}=span<\mathbb{P}_r, \cdots, \mathbb{P}_{k}>$. Let $\iota_k: \text{Bl}'_{k}\mathbb{P}^N \rightarrow \text{Bl}'_{1}\mathbb{P}^N$ be the (successive) blow-up morphism and the lifted rational map induced by the first blow-up is denoted by $p'_{k,1}: \text{Bl}'_{1}\mathbb{P}^N \dashrightarrow \mathbb{P}_{I_{r-k+1}}$. Using commutative diagram we have 
	\[	
\begin{tikzcd}
\text{Bl}'_{k}\mathbb{P}^{N}\arrow{dr}{
	\tilde{p'_k}} \arrow{r}{\iota_k} & \text{Bl}'_{1}\mathbb{P}^{N}\arrow[dashed]{d}{p'_{k,1}}\arrow{r} & \mathbb{P}^{N}\arrow[dashed]{dl}{p'_k}\\	
& \mathbb{P}_{I_{r-k+1}}
\end{tikzcd}
\]

We prove the following proposition.
\begin{proposition}\label{imageofex}
When $X$ is of tube type with rank at least $3$,  $p'_{r-1,1}|_{\mathcal{T}_1}$ is birational onto $\mathbb{P}_{r-1}$. 
\end{proposition}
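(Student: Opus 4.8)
The plan is to reduce the statement to Lemma~\ref{inversionmatrix} by identifying the restriction $p'_{r-1,1}|_{\mathcal{T}_1}$ with the map $\varphi_{r-1}|_{x_0=0}$ analysed there. Recall from the coordinate description preceding Lemma~\ref{nkcoor} that a point of $\mathbb{P}^N$ is written $[z,\mathbf{z}_0,\dots,\mathbf{z}_{r-1}]$, where $z$ is the $\mathbb{P}_0$-coordinate and $\mathbf{z}_{k-1}$ is the block of $\mathbb{P}_k$-coordinates, realised on $X$ by the degree-$k$ generators $\mathbf{F}_{k-1}$ of $\mathcal{I}(\mathcal{C}_o^{k-1}(X))$. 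In this notation the target $\mathbb{P}_{I_2}=\mathrm{span}\langle\mathbb{P}_r,\mathbb{P}_{r-1}\rangle$ of $p'_{r-1}$ corresponds to the pair of blocks $(\mathbf{z}_{r-2},\mathbf{z}_{r-1})$, so that $p'_{r-1}$ is the linear projection $[z,\mathbf{z}_0,\dots,\mathbf{z}_{r-1}]\mapsto[\mathbf{z}_{r-2},\mathbf{z}_{r-1}]$. The exceptional divisor $\mathcal{T}_1\cong\mathbb{P}T_o(X)$ is swept out by the tangent directions at $o$ of the images of the affine lines $\{[1,s\mathbf{u}]\}\subset\mathbb{P}^n$; identifying $\mathbb{P}T_o(X)$ with $\{x_0=0\}\cong\mathbb{P}^{n-1}$, a general point of $\mathcal{T}_1$ is recorded by $[\mathbf{u}]=[x_1,\dots,x_n]$.

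First I would compute $p'_{r-1,1}|_{\mathcal{T}_1}$ by a leading-order (tangent-cone) computation on the strict transform $\mathrm{Bl}'_1 X$. Along the arc $s\mapsto\varphi([1,s\mathbf{u}])=[1,s\mathbf{u},s^2\mathbf{F}_1(\mathbf{u}),\dots,s^r\mathbf{F}_{r-1}(\mathbf{u})]$, using the homogeneity $\mathbf{F}_{k-1}(s\mathbf{u})=s^k\mathbf{F}_{k-1}(\mathbf{u})$, applying $p'_{r-1}$ and clearing the common factor $s^{r-1}$ gives
\[
p'_{r-1}\big(\varphi([1,s\mathbf{u}])\big)=\big[s^{r-1}\mathbf{F}_{r-2}(\mathbf{u}):s^{r}\mathbf{F}_{r-1}(\mathbf{u})\big]=\big[\mathbf{F}_{r-2}(\mathbf{u}):s\,\mathbf{F}_{r-1}(\mathbf{u})\big].
\]
Letting $s\to 0$, i.e. approaching $\mathcal{T}_1=\{s=0\}$ inside $\mathrm{Bl}'_1 X$, the $\mathbb{P}_{r-1}$-block dominates the $\mathbb{P}_r$-block by exactly one power of $s$, so the limit is $[\mathbf{F}_{r-2}(\mathbf{u}):0]\in\mathbb{P}_{r-1}$. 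Hence $p'_{r-1,1}|_{\mathcal{T}_1}([\mathbf{u}])=[\mathbf{F}_{r-2}(\mathbf{u})]$, which is precisely $\varphi_{r-1}|_{x_0=0}$ as in Lemma~\ref{nkcoor}; by Lemma~\ref{degreevmrt} this value is nonzero exactly when $[\mathbf{u}]\notin\mathcal{C}_o^{r-2}(X)$, so the two rational maps agree on the dense open set $\mathbb{P}T_o(X)\setminus\mathcal{C}_o^{r-2}(X)$ and therefore coincide. Invoking Lemma~\ref{inversionmatrix}, which states that $\varphi_{r-1}|_{x_0=0}$ is birational onto $\mathbb{P}_{r-1}$ when $X$ is of tube type with $\mathrm{rank}\geq 3$, the proposition follows.

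The step I expect to be the main obstacle is justifying rigorously that $p'_{r-1,1}|_{\mathcal{T}_1}$ is computed by this limit. Indeed, viewed as a rational map on the ambient blow-up $\mathrm{Bl}'_1\mathbb{P}^N$, the lift $p'_{r-1,1}$ has the entire exceptional $\mathbb{P}T_o(X)$ in its indeterminacy locus: the induced linear projection on the exceptional divisor of $\mathrm{Bl}_o\mathbb{P}^N$ has centre containing $\mathcal{T}_1$, since along $\mathcal{T}_1$ both coordinate blocks $\mathbf{z}_{r-2},\mathbf{z}_{r-1}$ vanish. The restriction becomes defined only after one first restricts to the strict transform $\mathrm{Bl}'_1 X\subset\mathrm{Bl}'_1\mathbb{P}^N$, where those competing blocks vanish to the distinct orders $r-1$ and $r$ along $\mathcal{T}_1$, so that the quotient has a well-defined limit. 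I would make this precise by working in explicit blow-up charts on $\mathrm{Bl}'_1 X$ (with coordinates $[\mathbf{u}]$ on $\mathcal{T}_1$ and the exceptional parameter $s$), confirming that the arcs $\{[1,s\mathbf{u}]\}$ sweep a neighbourhood of $\mathcal{T}_1$ and that the order-of-vanishing bookkeeping above is coordinate-independent, so that the leading term $[\mathbf{F}_{r-2}(\mathbf{u})]$ genuinely represents the restricted map.
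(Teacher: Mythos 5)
Your proof is correct and takes essentially the same route as the paper: the paper likewise expands along the arcs $t\mapsto[1,t\mathbf{x},t^2\mathbf{F}_1(\mathbf{x}),\cdots]$ (its curves $C_j$), reads off the leading block $[\mathbf{F}_{r-2}(\mathbf{x}),0,\cdots,0]\in\mathbb{P}_{r-1}$, and concludes by Lemma \ref{inversionmatrix}. The only difference is bookkeeping: the paper follows the strict transforms $C^k_j$ through the whole tower and evaluates the already-resolved morphism $\tilde{p'_{r-1}}$ on $\mathcal{T}_{r-1}\subset \text{Bl}'_{r-1}\mathbb{P}^N\subset \mathcal{G}_{r-1}=\mathbb{P}^N\times\mathbb{P}_{I_r}\times\cdots\times\mathbb{P}_{I_2}$, which sidesteps the well-definedness issue on $\mathcal{T}_1$ that you correctly flag and resolve via the order-of-vanishing argument on $\text{Bl}'_1X$.
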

\begin{proof}
We know $\text{Bl}'_{k}\mathbb{P}^N$ is a subvariety in $ \mathcal{G}_k:=\mathbb{P}^N\times\mathbb{P}_{I_r}\times \cdots \times \mathbb{P}_{I_{r-k+1}}$ and $\tilde{p'_k}$ is the restriction of the projection from $\mathcal{G}_k$ to $\mathbb{P}_{I_{r-k+1}}$. In order to give the restriction of $\tilde{p'_k}$ on $\mathcal{T}_k\subset \text{Bl}'_{k}X\subset \text{Bl}'_{k}\mathbb{P}^N$, we consider the strict transform of the following curve where $\mathbf{x}=(x_1,\cdots, x_n)$
\[
C_j:=\{[1, t\mathbf{x}, t^2\mathbf{F}_1(\mathbf{x}), \cdots, t^j\mathbf{F}_{j-1}(\mathbf{x}),0,\cdots, 0], t\in \mathbb{C}\}.
\] 
Let $C^k_j$ be the strict transform of $C_j$ in $\text{Bl}'_{k}X$ ($k\leq j$).
Note that through the embedding $ \text{Bl}'_{k}X\subset \text{Bl}'_{k}\mathbb{P}^N$, $C^k_j$ can be identified with the strict transform of $C_j$
in the blow-ups on $\mathbb{P}^N$. We know in $\mathcal{G}_1=\mathbb{P}^N\times \mathbb{P}_{I_r}$, we have 
\[C_j^1\backslash o=\{[1, t\mathbf{x}, t^2\mathbf{F}_1(\mathbf{x}), \cdots, t^j\mathbf{F}_{j-1}(\mathbf{x}),0,\cdots, 0]\times[t\mathbf{x}, t^2\mathbf{F}_1(\mathbf{x}), \cdots, t^j\mathbf{F}_{j-1}(\mathbf{x}),0,\cdots, 0] , t\in \mathbb{C}\backslash\{0\}\}\]
Letting $t\rightarrow 0$ we can obtain that
\[C_j^1\cap \mathcal{T}_1=\{[1,0,\cdots, 0]\times [\mathbf{x},0,\cdots, 0]\}\]
Similarly we have  
\[C_j^k\cap \mathcal{T}_k=\{[1,0,\cdots, 0]\times [\mathbf{x},0,\cdots, 0]\times [\mathbf{F}_1(\mathbf{x}),0,\cdots, 0]\times \cdots \times [\mathbf{F}_{k-1}(\mathbf{x}),0,\cdots,0]\}\]
Then $\tilde{p'_k}(C_j^k\cap \mathcal{T}_k)=\{[\mathbf{F}_{k-1}(\mathbf{x}),0,\cdots,0]\}\subset \mathbb{P}_k\subset  \mathbb{P}_{I_{r-k+1}}$. In particular when $k=r-1$, $\tilde{p'_{k}}(\mathcal{T}_{k})=\mathbb{P}_{r-1}$ by Lemma \ref{inversionmatrix}. 
\end{proof}

Now we can prove 
\begin{proposition}\label{contraction}
	The successive blow-ups on $X$ resolves the rational map $\psi$ and gives a morphism $\psi' : \text{Bl}_{r-1}X \rightarrow \mathbb{P}^n$. In other words, we have the following commutative diagram.
	\[	
	\begin{tikzcd}
	\text{Bl}_{r-1}X\arrow{dr}{
		\psi'} \arrow{r} & X\arrow[dashed]{d}{\psi}\\	
	& \mathbb{P}^n
	\end{tikzcd}
	\]
	Moreover the morphism $\psi'$ maps the divisor $\mu^{(r-j-1)}_{r-1}(E^X_j) \subset \text{Bl}_{r-1}X$  to $\mathcal{C}_o^{r-j+1}(X)\subset \{x_0=0\}\subset \mathbb{P}^n$ for $1\leq j\leq r-1$ and it maps the divisor $\mu_{r-1}(D)$ to $\mathcal{C}_o^1(X)\subset \{x_0=0\}\subset \mathbb{P}^n$.
\end{proposition}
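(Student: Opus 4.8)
The plan is to realize $\psi$ as a linear projection of the ambient $\mathbb{P}^N$ and resolve it by an ambient flag of blow-ups, exactly paralleling the construction of $\tilde{p'_k}$. Since the first two coordinate blocks of $\varphi$ are $x_0^{r-1}[x_0,x_1,\dots,x_n]$, the inverse $\psi=\varphi^{-1}$ is the restriction to $X$ of the linear projection $\pi:\mathbb{P}^N\dashrightarrow \mathbb{P}_{J_1}=\mathbb{P}^n$ away from the center $\mathbb{P}_{I_{r-1}}=\mathrm{span}\langle \mathbb{P}_2,\dots,\mathbb{P}_r\rangle$. By Lemma \ref{linearsec} its indeterminacy locus on $X$ is $X\cap \mathbb{P}_{I_{r-1}}=\mathcal{M}_{r-1}$. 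First I would blow up $\mathbb{P}^N$ successively along (the strict transforms of) the linear flag $\mathbb{P}_{I_1}\subset \mathbb{P}_{I_2}\subset\cdots\subset \mathbb{P}_{I_{r-1}}$; the last of these blow-ups makes the ideal sheaf of $\mathbb{P}_{I_{r-1}}$ invertible, so the resulting variety dominates $\mathrm{Bl}_{\mathbb{P}_{I_{r-1}}}\mathbb{P}^N$, on which $\pi$ is already a morphism, whence $\pi$ lifts to a morphism there.

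Next, by the Blow-up Closure Lemma \ref{blowclosure} together with $\mathcal{M}_k=\mathbb{P}_{I_k}\cap X$ (Lemma \ref{linearsec}), the strict transform of $X$ inside this ambient flag blow-up is obtained by blowing up $X$ successively along $\mu_{k-1}(\mathcal{M}_k)$, i.e. it is precisely $\mathrm{Bl}_{r-1}X$; these centers are smooth by Proposition \ref{stofcha}(1). Restricting the lifted projection to this strict transform produces the desired morphism $\psi':\mathrm{Bl}_{r-1}X\to \mathbb{P}^n$ resolving $\psi$, giving the commutative triangle.

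For the images of the divisors I would combine $\mathbb{C}^*$-equivariance with an explicit limit computation along orbit curves. The embedding $X\subset\mathbb{P}^N$ is $\mathbb{C}^*$-equivariant and $\psi'$ is equivariant for the induced action $t\cdot[x_0,\mathbf{x}]=[x_0,t\mathbf{x}]$ on $\mathbb{P}^n$; since every boundary divisor is $\mathbb{C}^*$-invariant and supported over $D=\{x_0=0\}\cap X$, its image is a $\mathbb{C}^*$-invariant subvariety of $\{x_0=0\}=\mathbb{P}T_o(X)$. To identify it I would track, as in Proposition \ref{imageofex}, the strict transform of the degree $k$ orbit curve $C_k=\{[1,t\mathbf{x},t^2\mathbf{F}_1(\mathbf{x}),\dots,t^{k}\mathbf{F}_{k-1}(\mathbf{x}),0,\dots,0]\}$ with $[\mathbf{x}]=v_k\in\mathcal{C}_o^k(X)\setminus\mathcal{C}_o^{k-1}(X)$. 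Its infinity point lies in $N_k\subset \mathcal{M}_{r-k+1}\setminus\mathcal{M}_{r-k}$, hence (for $k\geq 2$) on the strict transform $\mu^{(r-j-1)}_{r-1}(E^X_j)$ with $j=r-k+1$, and (for $k=1$) on $\mu_{r-1}(D)$: the point lies over the smooth stratum of the center $\mathcal{M}_{r-k+1}$ that is blown up at step $r-k+1$, and for generic $v_k$ its normal direction avoids the strict transforms of the later centers. Because $\psi'$ is a morphism restricting to $\psi$ on the orbit, the image of this infinity point is $\lim_{t\to\infty}[1,t\mathbf{x}]=[0,\mathbf{x}]=v_k$. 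Letting $v_k$ vary shows the image contains a dense subset of $\mathcal{C}_o^k(X)$, and with the fibration of $\mathcal{M}_{r-k+1}\setminus\mathcal{M}_{r-k}$ over $N_k$ from Proposition \ref{fiboverinfty}(2) and Lemma \ref{nkcoor} one gets the reverse inclusion, so the image is exactly $\mathcal{C}_o^{r-j+1}(X)$ (and $\mathcal{C}_o^1(X)$ for $D$).

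The main obstacle I expect is this second part: controlling the strict transforms of the boundary divisors through all $r-1$ blow-ups and proving both inclusions for each image. Surjectivity onto the secant variety is clean via the orbit curves $C_k$, but showing the image is no larger — that $\psi'$ restricted to each boundary divisor factors through the secant variety with the expected collapsing of the projective-bundle fibers — requires the detailed $\mathbb{C}^*$-orbit description of $\mathcal{M}_{r-k+1}\setminus\mathcal{M}_{r-k}$ in Proposition \ref{fiboverinfty}(2) and the identification of its $N_k$-fibers with characteristic symmetric cells.
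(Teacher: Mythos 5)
Your overall skeleton (realize $\psi$ as the linear projection from $\mathbb{P}_{I_{r-1}}$ to $\mathbb{P}_{J_1}$, resolve it by the ambient flag blow-up along $\mathbb{P}_{I_1}\subset\cdots\subset\mathbb{P}_{I_{r-1}}$, then restrict) is the same as the paper's, but your second step contains a genuine error. You invoke the Blow-up Closure Lemma \ref{blowclosure} together with Lemma \ref{linearsec} to conclude that the strict transform of $X$ in the ambient flag blow-up \emph{is} $\text{Bl}_{r-1}X$. The paper explicitly warns against exactly this identification: ``$\text{Bl}_jX$ is not equal to the successive strict transforms of $X$ in the blow-ups on $\mathbb{P}^N$.'' The Blow-up Closure Lemma produces, at each stage, the blow-up of the current strict transform of $X$ along its \emph{scheme-theoretic} intersection with the current ambient center. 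At the first stage this is unproblematic since $\mathcal{M}_1=\mathbb{P}_{I_1}\cap X$, but at stage $j\geq 2$ the ambient center is $\mu_{j-1}(\mathbb{P}_{I_j})$, and its scheme-theoretic intersection with the strict transform of $X$ need not equal $\mu_{j-1}(\mathcal{M}_j)$: Lemma \ref{linearsec} is an equality inside $\mathbb{P}^N$ and does not control how the strict transforms meet inside the exceptional divisors, where extra intersection appears (normal directions to $\mathcal{M}_{j-1}$ that lie along $\mathbb{P}_{I_j}$ but are not in the normal cone of $\mathcal{M}_j$, cf. Proposition \ref{describnor}, since $\mathcal{M}_j$ is singular along $\mathcal{M}_{j-1}$ and its tangent cone there is smaller than $T_yX\cap T_y\mathbb{P}_{I_j}$). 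This is why the paper only asserts a natural embedding $\text{Bl}_jX\hookrightarrow\text{Bl}_j\mathbb{P}^N$ and defines $\psi'$ as the restriction of the lifted projection $\tilde{p}$ to the embedded copy of $\text{Bl}_{r-1}X$; your $\psi'$, as written, lives on the strict transform, which is a different modification of $X$, so the commutative triangle for $\text{Bl}_{r-1}X$ is not established.

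The second gap is the one you yourself flag: the inclusion $\psi'\bigl(\mu^{(r-j-1)}_{r-1}(E^X_j)\bigr)\subseteq\mathcal{C}_o^{r-j+1}(X)$. Your orbit-curve computation with $C_k$ gives the containment $\supseteq$ (it is essentially the computation in Proposition \ref{imageofex}, modulo the unproved claim that the strict transform of $C_k$ avoids the later centers — note the limit point's base lies in $\mathcal{M}_{r-k+1}\subset\mathcal{M}_{r-k+2}$, so it sits over the \emph{next} center and avoidance needs an argument), but your proposed tools for $\subseteq$ do not suffice. In particular, $\mathbb{C}^*$-equivariance is vacuous here: the induced action on $\{x_0=0\}$ is $[0,\mathbf{x}]\mapsto[0,t\mathbf{x}]=[0,\mathbf{x}]$, i.e.\ trivial, so ``$\mathbb{C}^*$-invariant subvariety of $\mathbb{P}T_o(X)$'' is no constraint at all. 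The paper's actual mechanism is: (i) identify each fiber $E^X_{j,x}$, $x\in N_{r-j+1}$, with $\mathbb{P}T_x(W^x_{r-j+1})$ for a balanced subspace $W^x_{r-j+1}$, using the loci of chains of minimal rational curves and Lemma \ref{tangconihss}; (ii) apply Proposition \ref{imageofex} — which rests on the matrix-inversion Lemma \ref{inversionmatrix} for tube-type spaces — \emph{inside} $W^x_{r-j+1}$ to show the whole fiber maps onto the linear space $\mathbb{P}^x_1\cong\mathbb{P}T_o(W^x_{r-j+1})$; (iii) sweep with $K^{\mathbb{C}}$, using Lemma \ref{secantrank} and Proposition \ref{fiboverinfty}(1), to get $K^{\mathbb{C}}.\mathbb{P}T_o(W_{r-j+1}^x)=\mathcal{C}_o^{r-j+1}(X)$ exactly; and (iv) reduce fibers over arbitrary $y\in\mathcal{M}_j\setminus\mathcal{M}_{j-1}$ to fibers over $\kappa_j(y)\in N_{r-j+1}$ by translating with $g\in Aut\bigl(Z^{\kappa_j(y)}_{j-1}\bigr)$, which acts trivially on $\mathbb{P}_{J_{r-j}}$ and hence does not change the image. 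Steps (i)--(iv) are absent from your proposal, and without them the claimed identification of the divisor images remains unproven.
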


\begin{proof}
	We can see that the inverse of $\varphi$, denoted by $\psi$, is exactly restriction of the projection of $\mathbb{P}^N$ from $\mathbb{P}_{I_{r-1}}$ to $\mathbb{P}_{J_1}$. Then from Lemma \ref{linearsec} indeterminacy of $\psi$ is exactly $\mathcal{M}_{r-1}$. Let $\text{Bl}_j\mathbb{P}^N (1
	\leq j\leq r-1)$ be the successive blow-up along (strict transforms of) $\mathbb{P}_{I_1}\subset\cdots\subset \mathbb{P}_{I_{j}}$. From Lemma \ref{linearsec} we know $\text{Bl}_jX$ can be naturally embedded in $\text{Bl}_j\mathbb{P}^N$ (Warning: $\text{Bl}_jX$ is not equal to the successive strict transforms of $X$ in the blow-ups on $\mathbb{P}^N$). Moverover, let $p$ be the projection of $\mathbb{P}^N$ from $\mathbb{P}_{I_{r-1}}$. Then the blow-ups on $\mathbb{P}^N$ resolve $p$ and hence $\psi$. We denote the the morphism $\text{Bl}_{r-1}\mathbb{P}^N\rightarrow \mathbb{P}_{J_1}$ by $\tilde{p}$ and the morphism $\text{Bl}_{r-1}X \rightarrow \mathbb{P}_{J_1}$ by $\tilde{\psi}$.
	We know after $(r-1)$-times successive blow-ups, total transform of the compactifying divisor $D$ is 
	\[\bigcup_{j=1}^{r-1}\mu^{(r-j-1)}_{r-1}(E^X_j)\cup \mu_{r-1}(D)\]
	where we recall that $\mu^{(r-j-1)}_{r-1}(E^X_j)$ is the strict transform of the $j$-th exceptional divisor in the next $(r-j-1)$-times blow-ups and $\mu_{r-1}(D)$ is the strict transform of $D$ in the $(r-1)$-times blow-ups. Fix a point $x\in N_{r-j+1}$, from Proposition \ref{tubefib} we know it is corresponding to a balanced subspace $W^x_{r-j+1}$ of rank $r-j+1$. When we do the $j$-th blow-up, we know the $j$-th exceptional divisor $E_j^X$ 
	is a $\mathbb{P}^{d_j}$-bundle over $\mu_{j-1}(\mathcal{M}_j)$ as the latter is smooth from Proposition \ref{stofcha}. Also we know the strict transform of $\mathcal{V}^{W^x_{r-j+1}}_{s}$ is  $\text{Bl}_{x}\mathcal{V}^{W^x_{r-j+1}}_{s}$ and moreover \[\text{Bl}_{x}\mathcal{V}^{W^x_{r-j+1}}_{s}\cap E^X_{j}=TanCone(\mathcal{V}^{W^x_{r-j+1}}_{s},x)=\mathcal{C}_x^s(W^x_{r-j+1}) \]
	In particular when $s=r-j+1$,
	\[\text{Bl}_{x}W^x_{r-j+1}\cap E^X_{j}=\mathbb{P}T_x(W^x_{r-j+1}).\]
	Thus the fiber in $E^X_j$ over $x$, denoted by $E^X_{j,x}$ is identified with $\mathbb{P}T_x(W^x_{r-j+1})$. 

	 When we do the blow-up along $\mu_{j-1}(\mathcal{M}_{j})$, we use $\tilde{\psi}_j$ to denote the morphsim $\text{Bl}_jX\rightarrow\mathbb{P}_{J_{r-j}}$ which is just restriction of the natural morphism $\tilde{p}_j: \text{Bl}_j\mathbb{P}^N\rightarrow \mathbb{P}_{J_{r-j}}$ resolving the projection $p_j: \mathbb{P}^N \dashrightarrow \mathbb{P}_{J_{r-j}}$.
	 
	  Let $\mathbb{P}^x=span<W^x_{r-j+1}> \subset \mathbb{P}^N$. We know $W^x_{r-j+1}=\mathbb{P}^x\cap X$ and $W^x_{r-j+1} \subset \mathbb{P}^x$ is also the minimal equivariant embedding of $W^x_{r-j+1}.$ Recall that $N^{W^x_{r-j+1}}_k=N_{k}\cap W^x_{r-j+1}$ is the $k$-th locus of infinity points (or the $k$-th fixed point component in the  Bia{\l}ynicki-Birula decomposition of $W^x_{r-j+1}$). Then we let $\mathbb{P}^x_k=span<N^{W^x_{r-j+1}}_k>=\mathbb{P}^x\cap \mathbb{P}_k$. Now from Proposition \ref{imageofex} we know the image of $\mu^{(r-j-1)}_{r-1}(E^X_{j,x})=\mu^{(r-j-1)}_{r-1}(\mathbb{P}T_x(W^x_{r-j+1}))$ under the morphism $\tilde{\psi}_{r-1}(=\psi'): \text{Bl}_{r-1}X \rightarrow \mathbb{P}_{J_1}$ is $\mathbb{P}_1^x\subset \mathbb{P}_1$. (Note that here in $W_{r-j+1}^{x}$ we choose the affine cell such that $x$ is the 'zero point' instead of $o$, so by Proposition \ref{imageofex} the image is $\mathbb{P}_1^x$ instead of $\mathbb{P}^x_{r-j}$.) When we vary $x\in N_{r-j+1}=K^{\mathbb{C}}/Q_{r-j+1}$, we obtain a family of balanced subspaces $\{W^x_{r-j+1}\}_{x\in N_{r-j+1}}$ of rank $r-j+1$ passing $o$ and they are transitive by the action of $K^{\mathbb{C}}$. Also we know everything including $\mathbb{P}^x_k$ is transformed by the action of $K^{\mathbb{C}}$. In particular for $\mathbb{P}^x_1\cong \mathbb{P}T_o(W^x_{r-j+1})$, we know the $K^{\mathbb{C}}$-action on $\mathbb{P}_1$ is the same as the isotropy action of $K^{\mathbb{C}}$ on $\mathbb{P}T_{o}(X)$. Then by Proposition \ref{fiboverinfty}(1), we have \[
	  \begin{aligned}
	  \psi'(\mu^{(r-j-1)}_{r-1}(\bigcup_{x\in N_{r-j+1}}E^X_{j,x}))&=\tilde{\psi}_{r-1}(\mu^{(r-j-1)}_{r-1}(\bigcup_{x\in N_{r-j+1}}E^X_{j,x}))=K^\mathbb{C}.\mathbb{P}T_o(W_{r-j+1}^x)
	  \\&=\mathcal{C}_o^{r-j+1}(X)\subset \{x_0=0\}\subset \mathbb{P}^n
	  \end{aligned}
	  \]
	  We then consider all fibers in $E_j\rightarrow \mu_{j-1}(\mathcal{M}_j)$. We claim that for $y\in \mathcal{M}_j\backslash \mathcal{M}_{j-1}$, \[\psi'(\mu^{(r-j-1)}_{r-1}(E^X_{j,y}))=\psi'(\mu^{(r-j-1)}_{r-1}(E^X_{j,\kappa_j(y)}))\] where $\kappa_j:\mathcal{M}_j\backslash \mathcal{M}_{j-1}\rightarrow N_{r-j+1}$ is the fibration given in Proposition \ref{fiboverinfty}(2). From Proposition \ref{fiboverinfty}(2), we know $y$ is in the affine cell of a Hermitian  characteristic symmetric subspace $Z^{\kappa_j(y)}_{j-1}$ with zero point $\kappa_j(y)\in N_{r-j+1}$ and rank $j-1$. Then there is a balanced subspace $W^{y}_{r-j+1}$ passing $y$ which is translated by a group action $g\in Aut(Z^{\kappa_j(y)}_{j-1})\subset G$ on $W^{\kappa_j(y)}_{r-j+1}$. Since $g$ acts trivially on $\mathbb{P}_{J_{r-j}}$ and $\mathbb{P}^{\kappa_j(y)}=span<W^{\kappa_j(y)}_{r-j+1}>= \kappa_j(y)+\mathbb{P}_{J_{r-j}}\cap \mathbb{P}^{\kappa_j(y)}$, we have $\mathbb{P}^{y}=span<W^y_{r-j+1}>=g.\mathbb{P}^{\kappa_j(y)}= g.\kappa_j(y)+\mathbb{P}_{J_{r-j}}\cap \mathbb{P}^{\kappa_j(y)}=y+\mathbb{P}_{J_{r-j}}\cap \mathbb{P}^{\kappa_j(y)}$. Moreover we know the image of strict transforms of $E^X_{j,y}=\mathbb{P}T_{y}(W^y_{r-j+1})$ and $E^X_{j,\kappa_j(y)}=\mathbb{P}T_{\kappa_j(y)}(W^{\kappa_j(y)}_{r-j+1})$ under the morphism $\psi'$ are the same.
	  In conculsion we have
	  \[\psi'(\mu^{(r-j-1)}_{r-1}(\bigcup_{x\in \mathcal{M}_j\backslash \mathcal{M}_{j-1}}E^X_{j,x}))=\psi'(\mu^{(r-j-1)}_{r-1}(\bigcup_{x\in N_{r-j+1}}E^X_{j,x}))=\mathcal{C}_o^{r-j+1}(X)\subset \{x_0=0\}\subset \mathbb{P}^n.\]
	  Since $\mu_{j-1}(\mathcal{M}_{j})$ is the closure of $\mathcal{M}_{j}\backslash \mathcal{M}_{j-1}$, we finish the proof of the proposition.
\end{proof}

Now we are ready to finish the proof of the Main Theorem.

\begin{proof}
	From Proposition \ref{contraction} and the universal property of blow-up we 
	know there is a surjective morphism $\psi^1$ such that the following commutative diagram holds
	\[	
	\begin{tikzcd}
	\text{Bl}_{r-1}X\arrow{dr}{
		\psi'} \arrow{d}{\psi^1} \arrow{r} & X\arrow[dashed]{d}{\psi}\\	
	\text{Bl}_1\mathbb{P}^n \arrow{r} & \mathbb{P}^n 
	\end{tikzcd}
	\]
	Then we can see that by the morphism $\psi^1$, the divisor $\tau^{(r-k-1)}_{r-1}(E^X_k) \subset \text{Bl}_{r-1}X$  is mapped to $\mu_1(\mathcal{C}
	_o^{r-k+1}(X))$ for $1\leq k\leq r-1$ and the divisor $\mu_{r-1}(D)$ is mapped to $E^{\mathbb{P}^n}_1$. If we continue using the universal property of blow-ups, we obtain the following commutative diagram
	\[	
	\begin{tikzcd}
	&&&	\text{Bl}_{r-1}X\arrow{llld}{\psi^{r-1}}\arrow{lld}{\psi^k}\arrow{ld}{\psi^2}\arrow{dr}{
		\psi'} \arrow{d}{\psi^1} \arrow{r} & X\arrow[dashed]{d}{\psi}\\	
	\text{Bl}_{r-1}\mathbb{P}^n\arrow{r}&\cdots\text{Bl}_{k}\mathbb{P}^n\arrow{r}&\cdots \text{Bl}_2\mathbb{P}^n \arrow{r}&	\text{Bl}_1\mathbb{P}^n \arrow{r} & \mathbb{P}^n 
	\end{tikzcd}
	\]
	where all $\psi^k(1\leq k\leq r-1)$ are surjective morphisms. Write $\tilde{\psi}=\varphi^{r-1}$.
	Then by the morphism $\tilde{\psi}$, the divisor $\mu^{(r-k-1)}_{r-1}(E^X_k) \subset \text{Bl}_{r-1}X$  is mapped to $\tau^{(k-2)}_{r-1}(E_{r-k+1}^{\mathbb{P}^n})$ for $2\leq k\leq r$, the divisor $\mu^{(r-2)}_{r-1}(E^X_1) \subset \text{Bl}_{r-1}X$ is mapped to $\tau_{r-1}(\mathbb{P}^{n-1})$
	and the divisor $\mu_{r-1}(D)$ is mapped to $\tau^{(r-2)}_{r-1}(E^{\mathbb{P}^n}_1)$. 
	Note that in the blow-ups all centers are smooth, we know $\text{Bl}_{r-1}\mathbb{P}^n$ and $\text{Bl}_{r-1}X$ are smooth. Since $\tilde{\psi}$ is birational morphism between two smooth varieties with the same Picard number, $\tilde{\varphi}$ must be an isomorphism and then the Main Theorem holds.
\end{proof}

\bibliographystyle{alpha}
\bibliography{research} 	
\end{document}